\documentclass[12pt]{article}
\usepackage{slashed}
\usepackage{amsmath,amsthm,amssymb,mathrsfs}
\usepackage{graphicx}
\usepackage[margin=1in]{geometry}  
\usepackage{mathtools}
\mathtoolsset{showonlyrefs=true} 

\title{Scattering from infinity for semilinear wave equations\\ satisfying the null condition or the weak null condition}
\author{Hans Lindblad and Volker Schlue}

\numberwithin{equation}{section}
\usepackage[raggedright,small,bf]{titlesec}

\newcommand{\ud}{\mathrm{d}}
\newcommand{\dm}[1]{\ud \mu_{#1}}

\newcommand{\Lb}{\underline{L}}
\newcommand{\nablas}{\nabla\!\!\!\! /}
\newcommand{\pl}{\overline{\partial}}

\newcommand{\gammac}{\mathring{\gamma}}
\newcommand{\beq}{\begin{equation}}\newcommand{\eq}{\end{equation}}
\newcommand{\beqs}{\begin{equation*}}\newcommand{\eqs}{\end{equation*}}
\def\pa{\partial}\def\pab{\bar\pa}\def\opa{\overline{\pa }}

 \def\nabb{\text{$\nabla \mkern-13mu /$\,}}
 \def\slashm{\text{$m \mkern-13mu /$\,}}
 
\theoremstyle{plain}
\newtheorem{prop}{Proposition}[section]
\newtheorem{lemma}[prop]{Lemma}
\newtheorem{remark}{Remark}[section]
\newtheorem{cor}[prop]{Corollary}

\newtheorem{theorem}{Theorem}[section]

\def\Lb{\underline{L\!}}


\newcommand*{\bigtwo}[1]{\vcenter{\hbox{\scalebox{1.4}{\ensuremath#1}}}}

\allowdisplaybreaks[1]

\begin{document}

\maketitle

\begin{abstract}
  We show global existence backwards from scattering data at infinity for semilinear wave equations satisfying the null condition or the weak null condition.  Semilinear terms satisfying the weak null condition appear in many equations in physics.
  The scattering data is given in terms of the radiation field, although in the case of the weak null condition there is an additional  logarithmic term in the asymptotic behaviour that has to be taken into account.
  Our results are sharp in the sense that the solution has the same spatial decay as the radiation field does along null infinity, which is assumed to decay at a rate that is consistent with the forward problem.
  The proof uses a higher order asymptotic expansion together with a new fractional Morawetz estimates with strong weights at infinity.
\end{abstract}

  \section{Introduction}

  In this paper we prove global existence results from scattering data at null infinity for semilinear wave equations satisfying the null condition. The data at infinity is given in terms a radiation field which is assumed to decay at a rate consistent with the forward problem. We then prove the existence of a global solution with the prescribed asymptotics that decays spatially at the same rate.
  The proof relies firstly on a fractional Morawetz estimate for the backward problem, which allows us to exploit the decay of the radiation field, and secondly on the construction of suitable approximate solutions from the scattering data.

  The results in this paper apply to semilinear systems of wave equations satisfying the weak null condition. These kind of semilinear terms appear in many equations from physics, in particular in Einstein's equations in harmonic gauge but also for Maxwell Klein-Gordon on the Minkowski background, where there are only semilinear terms.

  To illustrate the type of results proven in this paper we present first the statement for the classical wave equation:
\begin{equation}
  \Box\psi=0\qquad \text{: on }\mathbb{R}^{3+1}\label{eq:intro:wave}
\end{equation}
Given the asymptotic form of the solution,
\begin{equation}\label{eq:radiationfielddata}
\psi(t,x) \sim \frac{F_0(r-t,\omega)}{r},\qquad r=|x|,\quad\omega=\frac{x}{|x|},
\end{equation}
we wish to prescribe data in terms of its radiation field $F_0$.
For the construction of a backward solution we will assume that $F_0$ decays along null infinity,
in the sense that for some $1/2<\gamma<1$, and some $N\geq 4$:
\begin{equation}\label{eq:decayassumption:intro}
\|F_0\|_{N,\gamma-1/2}^2:=\sum_{|\alpha|+k\leq N} \int_{\mathbb{R}} \int_{\mathbb{S}^2} \big|(\langle\, q\rangle\pa_q)^k\pa_\omega^\alpha F_0(q,\omega)\big|^2\langle q\rangle^{2\gamma -1}\,  \ud S(\omega) dq <\infty
\end{equation}
where throughout this paper $\langle q\rangle = \sqrt{1+|q|^2}$.

In this setting we then have:
\begin{theorem} \label{thm:intro:hom} Given $F_0$ such that \eqref{eq:decayassumption:intro} holds for some $1/2<\gamma<1$, and $N=3+k$, for some $k\in\mathbb{N}$, the wave equation \eqref{eq:intro:wave} has a unique solution  with radiation field $F_0$ as in \eqref{eq:radiationfielddata}    which for any $1\leq s<\gamma+1/2$ satisfies
 \begin{equation}\label{eq:thm:intro:bound}
\|\psi(t,\cdot)\|_{k,s-1}:= {\sum}_{|I|\leq k} \| \langle t-r\rangle^{s-1} (Z^I \psi)(t,\cdot)\|_{\mathrm{L}^2(\mathbb{R}^3)} \lesssim \|F_0\|_{3+k,\gamma-1/2}\,.
\end{equation}
Here $Z^I$ is any combination of $|I|\leq k$ of the commuting vector fields
$Z$, 
i.e. translations $\partial_t$, $\pa_i$, rotations $x^i\pa_j-x^j\pa_i$, boosts $x^i\pa_t+t\pa_i$, and the scaling vector field $t\pa_t+x^i\pa_i$.
\end{theorem}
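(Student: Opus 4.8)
The plan is to construct $\psi$ explicitly as a superposition of plane waves and then read off the weighted $L^2$ bounds from an exact spherical-average representation. First I would set $\psi(t,x)=\frac{1}{2\pi}\int_{\mathbb{S}^2}(\partial_q F_0)(x\cdot\omega-t,\omega)\,\ud S(\omega)$. Since each profile $g(x\cdot\omega-t,\omega)$ with $\omega$ frozen satisfies $\Box\, g=0$ (as $|\omega|=1$), we immediately get $\Box\psi=0$. The useful device is the exact identity obtained by changing variables on the sphere from $\omega$ to $(p,\phi)$, where $p=x\cdot\omega-t$ and $\phi$ is the azimuth about the axis $\omega_0=x/|x|$. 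Writing $x\cdot\omega=r\cos\theta$ and using $\sin\theta\,\ud\theta=-\ud p/r$ turns the sphere integral into
\[ r\,\psi(t,r\omega_0)=\frac{1}{2\pi}\int_0^{2\pi}\int_{-r-t}^{\,r-t}(\partial_p F_0)\bigl(p,\omega(p,\phi)\bigr)\,\ud p\,\ud\phi, \]
with $\omega(p,\phi)$ the point at polar angle $\arccos\tfrac{p+t}{r}$ from $\omega_0$. Near the upper endpoint $p=r-t$ one has $\omega(p,\phi)\approx\omega_0$, so the $p$-integral produces $\tfrac{1}{2\pi}F_0(r-t,\omega_0)$ and the $\phi$-integration a factor $2\pi$, giving $r\,\psi\approx F_0(r-t,\omega_0)$; this confirms that $\psi$ has the radiation field \eqref{eq:radiationfielddata} and exhibits the expected $1/r$ behaviour.

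To reduce the theorem to the case $k=0$ I would commute the vector fields $Z$ through this representation. Each $Z$ preserves the plane-wave form: the rotations act as $\partial_\omega$ on the profile, while the scaling $t\partial_t+x^i\partial_i$ and the boosts turn $g(p,\omega)$ into $p\,\partial_p g(p,\omega)$ plus angular terms. Since $p=r-t$ on the cone, $p\,\partial_p$ is precisely the weighted derivative $\langle q\rangle\partial_q$ appearing in the norm $\|\cdot\|_{N,\gamma-1/2}$. Thus $Z^I\psi$ has the same form with $\partial_q F_0$ replaced by a finite sum of $(\langle p\rangle\partial_p)^a\partial_\omega^b F_0$ with $a+b\le|I|+1$; together with the one further angular derivative spent in the spherical-average estimate below, this accounts for the two extra derivatives in $\|F_0\|_{2+k,\gamma-1/2}$.

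It then remains to prove the $k=0$ bound $\|\langle t-r\rangle^{s-1}\psi(t,\cdot)\|_{L^2}\lesssim\|F_0\|_{2,\gamma-1/2}$. Since $\int_{\mathbb{R}^3}\langle t-r\rangle^{2(s-1)}|\psi|^2\,\ud x=\int_0^\infty\!\int_{\mathbb{S}^2}\langle t-r\rangle^{2(s-1)}|r\psi|^2\,\ud S\,\ud r$, I would split into the region near the light cone and the region away from it. Near the cone the leading term $r\psi\approx F_0(r-t,\omega_0)$ contributes $\int\langle q\rangle^{2(s-1)}|F_0(q,\omega)|^2\,\ud q\,\ud S$, which is finite and bounded by $\|F_0\|_{0,\gamma-1/2}^2$ exactly when $2(s-1)<2\gamma-1$, i.e.\ $s<\gamma+\tfrac12$; this is where the sharp threshold enters. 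Away from the cone the two stationary directions $\omega=\pm\omega_0$ are separated from the bulk of the $p$-integral, and I would exploit the destructive interference in the spherical average — integrating by parts in $\phi$ and invoking the angular derivatives of $F_0$ — to gain the decay needed to absorb the growing weight $\langle t-r\rangle^{s-1}$.

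The main obstacle is this last step: obtaining the $L^2$-based constant $\|F_0\|_{2,\gamma-1/2}$, rather than the lossy $L^\infty$ bound coming from \eqref{eq:decayassumptioninfty}, uniformly up to the endpoint $s\to\gamma+\tfrac12$. The weight $\langle t-r\rangle^{s-1}$ grows in the interior once $s>1$, so the decay of $\psi$ off the cone must be captured sharply rather than crudely; the delicate part is to quantify the spherical cancellation directly in $L^2$ and to verify that both the near-cone and the interior contributions saturate at precisely the same threshold $s=\gamma+\tfrac12$, which is what makes the estimate sharp.
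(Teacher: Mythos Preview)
Your approach is fundamentally different from the paper's, and the step you yourself flag as the main obstacle is a genuine gap.

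The paper does not use a representation formula. It writes $\psi = v + \psi_{01}$ where
\[
\psi_{01} = \Bigl(\frac{F_0(r-t,\omega)}{r} + \frac{F_1(r-t,\omega)}{r^2}\Bigr)\chi\bigl(\tfrac{\langle t-r\rangle}{r}\bigr)
\]
is a \emph{second-order} approximate solution with $F_1$ fixed by the transport equation $2F_1' = \triangle_\omega F_0$, and $v$ solves $\Box v = -\Box\psi_{01}$ with trivial data at $t=T$, then lets $T\to\infty$. The $F_1$ correction is essential: it gains an extra factor $\langle t+r\rangle^{-1}$ in $\Box\psi_{01}$, so that $\int_t^\infty \|\langle t+r\rangle^s \Box\psi_{01}\|_{L^2}\,d\tau$ converges precisely for $s < \gamma + \tfrac12$. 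The weighted bound on $v$ then comes from the fractional Morawetz energy estimate of Section~2.2, and the bound on $\psi_{01}$ itself is a direct computation of the norm \eqref{eq:norm:s}. The two extra derivatives in $\|F_0\|_{2+k,\gamma-1/2}$ arise from the angular Laplacian $\triangle_\omega$ appearing in $\Box\psi_{01}$ and in the relation defining $F_1$, not from a commutation loss.

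Your Herglotz-type formula does give a homogeneous solution with radiation field $F_0$, and with some care the $Z$-commutation can be made precise. But the off-cone $L^2$ estimate is not proved, and your plan to ``integrate by parts in $\phi$'' does not help: the integrand depends on $\phi$ only through the angular argument of $F_0$, so a $\phi$-integration by parts costs an angular derivative of $F_0$ but produces no small factor in $\langle t-r\rangle$; there is no oscillatory phase here to drive a stationary-phase gain. What your formula naturally yields is the pointwise bound $|r\psi| \lesssim \langle t-r\rangle^{-\gamma}\|F_0\|_{N,\infty,\gamma}$, which does recover the threshold $s<\gamma+\tfrac12$ but with the $L^\infty$ norm \eqref{eq:decayassumptioninfty} of $F_0$, not the $L^2$ norm $\|F_0\|_{2+k,\gamma-1/2}$ required by the statement. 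Upgrading this to an $L^2\to L^2$ bound for the map $F_0\mapsto \langle t-r\rangle^{s-1}\psi$ would require a genuine operator-norm argument (a Schur test, $TT^*$, or a Plancherel identity for the spherical Radon transform) that you do not supply. The paper's energy method sidesteps this entirely, because the fractional Morawetz inequality is $L^2$-based by construction and $\|\langle t+r\rangle^s\Box\psi_{01}\|_{L^2}$ is controlled directly by the $L^2$ norm of $F_0$.
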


Note that the theorem states in particular that the ``initial data'' (the induced data at $t=0$) falls off at spatial infinity, if the radiation field decays along null infinity. In fact, the solution exhibits  almost the same spatial decay as the radiation field,
as can be seen from the following pointwise decay estimates which follow readily from the Klainerman-Sobolev inequality with weights:

\begin{cor}\label{cor:intro:decay} Let $\psi$, $\gamma$ and $F_0$ be as in as in Theorem 1.1. Then for any $\gamma^\prime<\gamma$ we have
\begin{equation}
|Z^I\psi(t,x)|\lesssim \frac{\|F_0\|_{5+|I|,\gamma-1/2}}{\langle\, t+r\rangle\langle\, t-r\rangle^{\gamma^\prime}}\,.
\end{equation}
\end{cor}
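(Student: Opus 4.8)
The plan is to deduce the pointwise bound from the weighted $\mathrm{L}^2$ bound of Theorem 1.1 by means of a weighted Klainerman--Sobolev inequality, choosing the weight exponent so that the power of $\langle t-r\rangle$ produced on the left matches the desired $\gamma'$. Concretely, given $\gamma'<\gamma$ I would set $s=\gamma'+\tfrac12$, which satisfies $s<\gamma+\tfrac12$ so that Theorem 1.1 applies with this $s$, and write $\sigma=s-1=\gamma'-\tfrac12$ for the weight exponent appearing in the norm $\|\cdot\|_{k,s-1}$.

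The key ingredient is the weighted inequality
\begin{equation*}
\langle t+r\rangle\,\langle t-r\rangle^{\frac12+\sigma}\,|v(t,x)|\lesssim \sum_{|J|\leq 2}\big\|\langle t-r\rangle^{\sigma}\,Z^J v(t,\cdot)\big\|_{\mathrm{L}^2(\mathbb{R}^3)},
\end{equation*}
valid for any $\sigma\in\mathbb{R}$. I would obtain this from the unweighted Klainerman--Sobolev inequality applied to $w=\langle t-r\rangle^{\sigma}v$, using the Leibniz rule together with the fact that the weight is essentially invariant under the commuting fields: each $Z$ acting on $\langle t-r\rangle^{\sigma}$ produces at most a comparable factor, $|Z^{J_1}\langle t-r\rangle^{\sigma}|\lesssim \langle t-r\rangle^{\sigma}$ for $|J_1|\leq 2$. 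Indeed, the rotations annihilate $r$, while for the scaling $S$ and the boosts $B_i$ one computes $S(t-r)=t-r$ and $B_i(t-r)=-(x^i/r)(t-r)$, so that $|Z(t-r)|\lesssim |t-r|$ and hence $|Z\langle t-r\rangle^{\sigma}|\lesssim \langle t-r\rangle^{\sigma}$; the translations only improve the estimate. Thus $\sum_{|J|\leq2}\|Z^J w\|_{\mathrm{L}^2}\lesssim \sum_{|J|\leq2}\|\langle t-r\rangle^{\sigma}Z^J v\|_{\mathrm{L}^2}$, and the unweighted inequality applied to $w$ yields the displayed bound.

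Applying this with $v=Z^I\psi$ for $|I|\leq k-2$, the right-hand side involves $Z^J\psi$ with $|J|\leq |I|+2\leq k$ and is therefore dominated by $\|\psi(t,\cdot)\|_{k,s-1}$, which by Theorem 1.1 is $\lesssim \|F_0\|_{2+k,\gamma-1/2}$. Since $\tfrac12+\sigma=\gamma'$ with our choice of $s$, this gives exactly $|Z^I\psi(t,x)|\lesssim \langle t+r\rangle^{-1}\langle t-r\rangle^{-\gamma'}\|F_0\|_{2+k,\gamma-1/2}$, as claimed. The only genuinely delicate point is the commutation estimate $|Z^{J_1}\langle t-r\rangle^{\sigma}|\lesssim \langle t-r\rangle^{\sigma}$: one must check that the factors $x^i/r$ generated by the boosts and by their subsequent differentiation remain bounded (they are purely angular, hence controlled by the rotations), so that no power of $\langle t-r\rangle$ is lost and the decay rate $\gamma'$ is not degraded. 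Everything else is the standard Klainerman--Sobolev machinery.
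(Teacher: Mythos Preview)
Your proposal is correct and follows essentially the same route as the paper: the corollary is stated in the introduction as a direct consequence of Theorem~1.1 and the weighted Klainerman--Sobolev inequality, which is recorded later as Lemma~\ref{lemma:KS:weights} and proved exactly as you describe---apply the unweighted inequality to $\langle t-r\rangle^{s-1}\phi$ and use $|Z^{J}\langle t-r\rangle^{s-1}|\lesssim\langle t-r\rangle^{s-1}$. Your choice $s=\gamma'+\tfrac12<\gamma+\tfrac12$ and the verification of the commutator bound (including the observation that $S(t-r)=t-r$ and $B_i(t-r)=-(x^i/r)(t-r)$) match the paper's treatment.
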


Following a brief discussion of the relation of the above theorem
to the forward problem in Section~\ref{sec:scattering:intro},
we continue our discussion of the results for semilinear wave equations satisfying the (weak) null condition in Section~\ref{sec:intro:scattering}.

A  motivation for the type of wave equations considered in this paper is the scattering problem for gravitational waves in the context of Einstein's equations. We review the asymptotic form of Einstein's equations in harmonic coordinates briefly in Section~\ref{sec:intro:einstein}.

Further inspiration for this paper is  the work of first author with Avy Soffer on the scattering problem for Klein-Gordon \cite{LS05,LS06}, and the work of H\"ormander \cite{H97}.

\subsection{Relation to the forward problem}
\label{sec:scattering:intro}

As already mentioned, to capture the decay of the  the radiation field $F_0$, we assume that \eqref{eq:decayassumption:intro} holds for some $1/2<\gamma<1$, and $N\geq 4$,  which implies by a Sobolev inequality that for $N'=N-3$,
\begin{equation}\label{eq:decayassumptioninfty}
\|F_0\|_{N',\infty,\gamma}:=\sum_{|\alpha|+k\leq N'} \sup_{q\in\mathbb{R}}\sup_{\omega\in\mathbb{S}^2} \big|(\langle\, q\rangle\pa_q)^k\pa_\omega^\alpha F_0(q,\omega)\big|\langle q\rangle^{\gamma}\,   <\infty\,.
\end{equation}
So at a pointwise level the assumption means that the radiation field decays like $|F_0|\lesssim 1/\langle q\rangle^\gamma$, and $| F_0'| \lesssim 1/\langle q\rangle^{\gamma+1}$, etc.~for some $\gamma>1/2$.

Recall that solutions to the Cauchy problem for \eqref{eq:intro:wave} with smooth compactly supported data at $t=0$ satisfy
\begin{equation}
  \label{eq:ho:decay}
  |\pa^\alpha Z^I \psi| \leq \frac{C_{\alpha,I}}{\langle t+r\rangle \langle t-r\rangle}\,;
\end{equation}
see for instance Theorem~6.2.1 in~\cite{H97}. Thus there are solutions to the forward problem which at least in terms of the rate satisfy the assumptions with $\gamma=1$. However, energy methods combined with the Klainerman-Sobolev inequality only give \emph{forward} asymptotics with $\gamma=1/2$; see for instance  Remark following Proposition~6.5.1 in \cite{H97}. Our Theorem~\ref{thm:intro:hom} shows that energy methods \emph{can} be used in the \emph{backwards} problem to cover the range $1/2<\gamma<1$.

While for the forward problem  it is difficult to obtain interior decay, the main difficulty for the backwards problem is exterior decay.  This is evident already from the weighted energy estimates of Section~6 in \cite{LR10}, which in the simplest case for the homogeneous wave equation say that the weighted energy
\begin{equation}
  \int |\partial \phi|^2(t,x) w(|x|-t) \ud x
\end{equation}
is decreasing in $t$ for any weight $w(q)\geq 0$, which in increasing towards the exterior, $w'(q)\geq 0$. Thus these estimates can be used to obtain exterior decay for the forward problem. Interior decay typically relies  on the fundamental solution, see for instance Section~5 of \cite{L17},  which loses regularity.

Conversely for the backwards problem any positive weight which is \emph{increasing towards the interior}, $w'(q)\leq 0$, gives an energy which is decreasing towards the past. In particular one may use a weight  $w(q)=(1+|q|)^{\gamma}$ for any $\gamma> 0$ when $q<0$.
We have included the analogue of these weighted estimates for the backwards problem in Section~\ref{sec:weighted}, and use them to prove global existence backwards for the wave equation with null condition in Section~\ref{sec:classical:null}. However, this alone does not give exterior decay.

The exterior decay for the backwards problem is obtained in this paper with the fractional Morawetz estimate  in Section~\ref{sec:intro:morawetz}.
Since in the scattering problem the radiation field is \emph{given},
we may subtract the known leading order term in \eqref{eq:radiationfielddata} and essentially assume fast interior decay. In other words, the weighted estimate of Theorem~\ref{thm:morawetz:intro} is applied to a remainder that results after subtracting a suitable higher order expansion.
However,  the use of approximate solutions comes at the  cost of a loss of derivatives.

We do not attempt to formulate a scattering theory in weighted Sobolev spaces, but in this language our results correspond to  the \emph{existence and uniqueness} of scattering states, but \emph{not} their asymptotic completeness.

\subsection{Scattering with spatial decay for semilinear model problems}\label{sec:intro:scattering}

In Section~\ref{sec:intro:einstein} we will see that  a suitable simplified semilinear model of Einstein's equations in wave coordinates with a similar weak null structure is
\beq
  \Box \psi = Q(\pa \psi,\pa \varphi),\qquad
  \Box \varphi = \bigl( \partial_t\psi \bigr)^2\,. \label{eq:weaknullcondintro}
\eq

We separate this model further and solve the scattering problem in order of increasing complexity for the following equations: \emph{homogeneous wave equation}, \emph{scalar wave equation with classical null condition}, \emph{systems of wave equations with null condition}, and finally a simple model with \emph{weak null condition}.

\subsubsection{Homogeneous wave equation}
\label{sec:hom:intro}

The classical homogeneous wave equation  \eqref{eq:intro:wave} is treated in  \textbf{Section~\ref{sec:scattering}}.

Here we are given scattering data in the form of a radiation field $F_0(q,\omega)$, and the solution $\psi$ is constructed by taking the data for
$\psi$ when $t=T$ to be the restriction of the following \emph{approximate solution} at $t=T$, and then taking $T\to\infty$.
The approximate solution $\psi_0$ is defined in terms of the radiation field $F_0$, and is supported in the wave zone, away from the origin,
\begin{equation}\label{eq:linearexpansionpsi0}
\psi_{0}(t,x):=\frac{F_0(r-t,\omega)}{r}\chi\big(\tfrac{\langle\, t-r\rangle}{r}\big),\qquad r=|x|\,,\ \omega =\frac{x}{|x|}\,,
\end{equation}
where $\chi$ is a smooth decreasing function with $\text{supp}\chi\subset [0,1/4]$.

In fact, for \eqref{eq:intro:wave} as well as for the equations \eqref{eq:nullcondition}, and \eqref{eq:nullcondition:system:intro} below, we use a \emph{second order approximate solution},
\begin{equation}\label{eq:intro:second:approx}
  \psi_{01}:=\psi_0+\psi_1\qquad \psi_1:=\frac{F_1(r-t,\omega)}{r^2}\chi\big(\tfrac{\langle\, t-r\rangle}{r}\big)\,,
\end{equation}
where $F_1$ is determined by solving an ODE in $q$, i.e.~along null infinity, whose precise form depends on the equation. For example, for \eqref{eq:intro:wave} this reads
\begin{equation}\label{eq:intro:ODE:hom}
  2\partial_q F_1(q,\omega)=\triangle_\omega F_0(q,\omega)\,,\qquad F_1(0,\omega)=0\,.
\end{equation}

The main results are \textbf{Theorem~\ref{thm:intro:hom}} above, and \textbf{Proposition~\ref{prop:scattering:homogeneous}} below, which includes precise estimates for the remainder, which decays faster, namely
\begin{equation}
  \label{eq:intro:remainder:pointwise}
  |Z^I(\psi-\psi_{01})|\lesssim \frac{\|F_0\|_{5+|I|,\gamma-1/2}}{\langle t+r\rangle\langle t\rangle^\gamma}\,.
\end{equation}

Our scattering constructions also allow for the inclusion of a \emph{``mass term'' in the exterior}, namely of a term $M/r$ in the solution for $t<r$.

\begin{remark}
  Note that the solutions arising from this data have the property that $\overline{F_1}=0$, which coincides with the Newman Penrose constant \cite{AAG18a,NP68}.
\end{remark}

\begin{remark}
   A representation of the solution to the standard wave equation as a formal power series in $r^{-1}$ --- of which we only use here the first two terms as an approximation --- was first suggested in the analytic setting by Friedlander \cite{F62} where also the recurrence relations for the higher order radiation fields $F_i$, $i\in\mathbb{N}$, can be found.
   Asymptotic expansions of forward solutions to the wave equation have also been studied in \cite{BVW15}.
\end{remark}

\subsubsection{Classical null condition model}
\label{sec:null:intro}

The first nonlinear model is a scalar  wave equation satisfying the classical null condition
\begin{equation}\label{eq:nullcondition}
 \Box \phi = Q(\pa \phi,\pa \phi)\,.
\end{equation}

We present two scattering constructions for this model:

\smallskip
First, in \textbf{Section~\ref{sec:classical:null}} we show the existence of a global solution to \eqref{eq:nullcondition} which scatters to a given linear solution. More precisely, here  the  asymptotic data is given
in the form  $\phi\sim \phi_0$ as $t\to\infty$ of  a given solution to the homogeneous equation $\Box \phi_0=0$ with small (generalised) energy.  The main result here is \textbf{Proposition~\ref{prop:wave:null}}.  The approach  here is based on the weighted space-time energy estimates of Section~\ref{sec:weighted}, and does \emph{not} use the fractional Morawetz estimate. Moreover a strategy is prepared for the boundary terms that will used in the proof of the  more refined results in Section~\ref{sec:classical:null:revisited}.

  \smallskip
Then second, in \textbf{Section~\ref{sec:classical:null:revisited}} we give scattering data purely in terms of a radiation field. In fact, we do this for a \emph{system} of wave equations satisfying the null condition:
  \begin{equation}\label{eq:nullcondition:system:intro}
    \Box\psi=Q(\pa \psi,\pa \phi)\qquad \Box \phi = \widetilde{Q}(\pa \psi,\pa \phi)\,.
  \end{equation}
  The main result here is \textbf{Proposition~\ref{prop:nullcond:revisited}}, which is proven under the following smallness assumption on the scattering data for $\psi\sim F_0(r-t,\omega)/r$ and $\phi\sim G_0(r-t,\omega)/r$:
  \begin{equation}
    \label{eq:intro:smallness}
    \|F_0\|_{N,\gamma-1/2}+\|G_0\|_{N,\gamma-1/2}<\varepsilon
  \end{equation}
  The proof relies on the new fractional Morawetz estimates of Section~\ref{sec:conformal:energy}, and the scattering result can be summarised as follows:

\begin{theorem}\label{thm:intro:nullcond}
  Given $F_0$, $G_0$, satisfying \eqref{eq:intro:smallness} for some $1/2<\gamma<1$, and $N\geq 6$, and for $\varepsilon>0$ sufficiently small, the system of wave equations \eqref{eq:nullcondition:system:intro} satisfying the null condition has a global solution $(\psi,\phi)$  with radiation fields $F_0$, $G_0$, respectively, and the property that for all $1\leq s<1/2+\gamma$, and $k\leq N-6$,
\begin{equation}
  \|\psi(t,\cdot)\|_{k+1,s-1}+\|\phi(t,\cdot)\|_{k+1,s-1}\lesssim \|F_0\|_{k+6,\gamma-1/2}+\|G_0\|_{k+6,\gamma-1/2}\,.
\end{equation}
\end{theorem}

Here the solutions are also constructed with the help of an approximate solution of the form \eqref{eq:intro:second:approx}, where in contrast to \eqref{eq:intro:ODE:hom}  $F_1$, and $G_1$ now satisfy the nonlinear ODEs:
\begin{subequations}
\begin{align}
2\pa_qF_1(q,\omega)\!&= \triangle_\omega F_0(q,\omega)
-Q^0(F_0,F_0^\prime,\pa_\omega F_0,G_0,G_0^\prime,\pa_\omega G_0),\quad F_1(0,\omega)=0\\
2\pa_q G_1(q,\omega)\!&= \triangle_\omega G_0(q,\omega)-\widetilde{Q}^0(F_0,F_0^\prime,\pa_\omega F_0,G_0,G_0^\prime,\pa_\omega G_0)\,,\quad G_1(0,\omega)=0
\end{align}
\end{subequations}
where $Q^0$, $\widetilde{Q}^0$  are  bilinear forms in the radiation fields $F_0$, and $G_0$, and its first derivatives.

\subsubsection{Weak null condition model}
\label{sec:weak:intro}

The weak null condition model we consider is
\begin{subequations}
\begin{align}
  \Box \psi &= Q(\pa \psi,\pa \psi) \label{eq:linearequation}\\
  \Box \varphi &= \bigl( \partial_t\psi \bigr)^2 \label{eq:weaknullcond}\,.
\end{align}
\end{subequations}

In \textbf{Section~\ref{sec:weak:null:simple}} we approach the scattering problem for this system as follows: While  $\psi$  has a radiation field $F_0$, we prescribe a radiation field for $\varphi$ only after subtracting a suitable approximate solution which picks up the source term. In fact, we  first define $\Psi_0$ to be the solution of
\begin{equation}\label{eq:intro:psi:source}
\Box \Psi_0=(\psi_0^\prime)^2,\qquad\text{where}\qquad
\psi_{0}^\prime=-\frac{F_0^\prime(r-t,\omega)}{r}\chi\big(\tfrac{\langle\, t-r\rangle}{r}\big),
\end{equation}
with vanishing data at $t=-\infty$.

The main result for this system is given in \textbf{Proposition~\ref{prop:weak:null:system}},
which establishes the existence of a solution of the form $(\psi=v+\psi_{01}+\psi_e,\,\varphi=w+\Psi_{0}+\varphi_{01})$ (where $\psi_{01}$ and $\varphi_{01}$ are approximate solutions associated to radiation fields $F_i$, and $G_i$, $i=0,1$) and decay results for the remainder terms $v$, and $w$. In Section~\ref{sec:weak:null:simple} we thus estimate also the equation \eqref{eq:weaknullcond} for $\varphi$ in the same norms that we use for $\psi$.

Finally, from \cite{L90a,L17} we have a formula for the solution to \eqref{eq:intro:psi:source},
\begin{gather}
  \Psi_0=\Phi^2[n]\,,\qquad n(q,\omega)= F_0^\prime(q,\omega)^2\,,\\
\Phi^2[n](t,r\omega)=\int_{r-t}^{\infty} \frac{1}{4\pi}\int_{\bold{S}^2}{\frac{
 n({q},{\sigma})\chi\big(\tfrac{\langle\,{q}\,\rangle}{\rho}\big)^2  dS({\sigma})d {q}}{t-r+{q}+r\big(1-\langle\, \omega,{\sigma}\rangle\big)}\,
}\,,
\end{gather}
discussed in Section~\ref{sec:sources},
and an estimate for the solution of the form
\beq
| \Phi^2[n](t,r\omega)|\lesssim \frac{1}{2r}\ln{\Big(\frac{\langle \, t+r\rangle}{\langle\, t-r\rangle}\Big)}\,.
\eq

\begin{remark}
These asymptotics --- which in addition to the asymptotic behaviour \eqref{eq:radiationfielddata} in the wave zone display a logarithmic divergence at the lightcone and slower decay in the interior --- are the starting point of a scattering construction based directly on the asymptotics in the interior which we present separately in a forthcoming paper.
\end{remark}

\subsection{The fractional conformal Morawetz energy estimate from infinity}\label{sec:intro:morawetz}
The main technical tool of the paper  used to obtain the right spatial decay is a
new energy estimate from infinity with strong weights,  presented in \textbf{Section~\ref{sec:conformal:energy}}:
\begin{theorem}\label{thm:morawetz:intro}
Let $t_1\leq t_2$,  then for $s\geq 1$
\begin{equation}\label{eq:energyestimateintro}
\|\phi(t_1,\cdot)\|_{1,+,s-1}\lesssim \|\phi(t_2,\cdot)\|_{1,+,s-1}+\int_{t_1}^{t_2} \|\langle t+r\rangle^s \Box \phi(t,\cdot)\|_{L^2_x} \ud t
\end{equation}
provided that  $\lim_{r\to\infty}\sup_{\omega\in\mathbb{S}^2} r^{s+1/2}|\phi(t,r\omega)|=0$.
Here
\begin{multline}\label{eq:norm:sintro}
    \|\phi(t,\cdot)\|_{1,+,s-1}^2 := \\ \!\int_{\mathbb{R}^3}\! \langle\, t\!+r\rangle^{2s} \Big( \big|(\pa_t\!+\pa_r)(r \phi)\big|^2\! + |\nabb (r \phi)|^2\Big)
 +  \langle\, t\!-\!r\rangle^{2s}\Big(\big|(\pa_t\!-\!\pa_r)(r \phi)\big|^2\! + \phi^2\!+\frac{(r\phi)^2}{\langle \, t\!-\!r\rangle^2} \Big)\, \frac{dx}{r^2}
\end{multline}
is a norm such that
\begin{equation}\label{eq:normestimateintro}
\|\phi(t,\cdot)\|_{1,s-1}:={\sum}_{|I|\leq 1} \| \langle t-r\rangle^{s-1} (Z^I \phi)(t,\cdot)\|_{L^2}\lesssim \|\phi(t,\cdot)\|_{1,+,s-1}\,.
\end{equation}
\end{theorem}

If $s=1$ this reduces to the classical conformal Morawetz  estimate which holds in both the forward  direction $t_1>t_2$ (with the boundaries of integration switched) and the backward direction $t_1<t_2$. We remark that in the forward direction $t_1>t_2$ it was proven in Lindblad-Sterbenz \cite{LS06b}
that this estimate holds if $1/2\leq s\leq 1$.
The case $s>1$ will here be used in the backwards direction $t_1<t_2$,
and applied to the remainder $v$ of a solution $\psi=\psi_0+v$, where $\psi_0$ is a suitable approximate solution defined in terms of the scattering data.
The extra spatial decay we see in the backwards solution stems from the weight in $\langle t-r\rangle$ in the norm $\|\cdot\|_{1,s-1}$.

\subsection{Asymptotic form of Einstein's equation in wave coordinates}
\label{sec:intro:einstein}
Our main motivation for studying semilinear equations satisfying the weak null condition is that these types of semilinear terms show up in Einstein's equations in harmonic coordinates,  see \cite{LR05,LR10}. Moreover, for Einstein's equations the detailed asymptotic behaviour has been studied in \cite{L17}. The techniques in \cite{L17} would work for the forward problems for the equations that we are studying here and give asymptotics with the same spatial decay that we are prescribing at infinity. 

 In fact,
in the context of the global existence problem for Einstein's equations in harmonic coordinates  with asymptotically flat data close to Minkowski space it was shown in \cite{L17} that the components of the metric $h=g-m$ in suitable coordinates $(t,x^\ast)$, where $m$ denotes the Minkowski metric, satisfy asymptotically the semilinear system
\begin{equation}\label{eq:EinsteinWaveSemilinearModel}
\Box^\ast \,  h_{\mu\nu} = F_{\mu\nu} (m) (\pa h, \pa h)
\end{equation}
where $\Box^\ast$ is a constant coefficient wave operator in $(t,x^\ast)$ coordinates, and  $F_{\mu\nu}(m)(\pa h,\pa h)$ is a  sum of classical null forms which we denote by
$Q_{\mu\nu}(\pa h,\pa h)$ and $P(\pa h, \pa h)$ where
\beq
 P(D,E)= D_{\alpha}^{\,\, \, \alpha}
E_{\beta}^{\,\, \, \beta}/4- D^{\alpha\beta}
E_{\alpha\beta}/2.
\eq

Indeed Einstein's equations in wave coordinates do not satisfy the {\it null condition} \cite{C00},
which was identified by Klainerman as a condition on the type of nonlinearity that guarantees small data global existence  \cite{K82,K86,C86}.
Nonetheless Lindblad and Rodnianski  showed in \cite{LR03} that they satisfy a {\it weak null condition} and in
\cite{LR05,LR10} used it to prove global existence.

In fact   in \cite{LR03} it was  observed that \eqref{eq:EinsteinWaveSemilinearModel}  has a weak null structure in a  null frame $(L,\Lb\,; E_1,E_2)$ and reduces  to
 \beq\label{eq:einsteinfirstapproximationintro}
{\Box}h_{\mu\nu}\sim L_\mu L_\nu  P_{S}(\pa_q h,\pa_q h)\,,
\eq
where $P_S(D,E)=- \widehat{D}^{AB} \widehat{E}_{AB}/2$ only involves the angular components.

Moreover in \cite{L17} the asymptotic behaviour of all metric components is derived. In fact,
for asymptotically flat initial data for Einstein equations of the form
\begin{equation}\label{eq:asymptoticallyflatdata}
g_{ij}|_{t=0}=(1+M r^{-1})\, \delta_{ij} + o(r^{-1-\gamma}),\quad
\pa_t g_{ij}|_{t=0}=o(r^{-2-\gamma}),\quad 0<\gamma<1,\quad M>0,
\end{equation}
it was shown that asymptotically the metric is given by $g=m+h$, where $h$ satisfies
\begin{equation}\label{eq:Einsteinasymptotics}
h_{\mu\nu}(t,r\omega)\sim \frac{H_{\mu\nu}(r^*\!-t,\omega)}{\langle t+r\rangle }+\frac{K_{\mu\nu}\big(\tfrac{\langle t+r^*\rangle}{\langle r^*\!-t\rangle},\omega,r^*\!-t\big)}{\langle t+r\rangle},
\quad r^*\!\sim r+M\ln{r},\quad \omega\!=\!\frac{x}{|x|}\,.
\end{equation}
In other words for some components $h_{\mu\nu}$ has a radiation field $H_{\mu\nu}$ which is concentrated close to the outgoing light cones, and $K_{\mu\nu}$ appears for the transversal components to the lightcone and to leading order is homogeneous of degree $0$ with a logarithmic singularity at the light cone:
 \begin{align}\label{eq:Hdecay}
 |H(q^*\!,\omega)|&\lesssim (1\!+| q^*|)^{-\gamma^\prime},\qquad \gamma^\prime<\gamma\,,\\
 |K(s,\omega,q)|&\lesssim \ln{s}
 \end{align}
What is particularly important is the additional decay of $H$ in
\eqref{eq:Hdecay}. Here $\gamma^\prime$ is any number less than $\gamma$ in the asymptotic flatness condition \eqref{eq:asymptoticallyflatdata}. In the interior of the light cone $r^*<t$ this was very difficult
to prove for the forward problem, see \cite{L17}.

This paper is motivated by the backward problem: we simply give the radiation fields $H$ satisfying \eqref{eq:Hdecay} and we want to prove that there is a solution to Einstein's equations with the given asymptotics
\eqref{eq:Einsteinasymptotics} which initially satisfy the asymptotic flatness condition \eqref{eq:asymptoticallyflatdata} for any $\gamma<\gamma^\prime$. For the backward problem this is particularly difficult to prove in the exterior of the light cone $r^*>t$.

\subsection{Further relations to other work}

 The radiation field $F_0$ takes its name from Friedlander who in a series of papers \cite{F62,F64,F67,F80} defined the notion and first studied  existence, uniqueness and completeness properties of solutions to wave equations for a given radiation field. Along these lines  Baskin and S\'{a} Baretto have studied the scattering problem for critical semilinear wave equations in \cite{BB15}. Related to the study of the forward map  asymptotic expansions of the radiation field have been established in some generality in \cite{BVW15}.

The scattering problem for Einstein's equations \emph{in higher dimensions} close to Minkowski space was studied by Wang \cite{W14}.
However, the three dimensional case is more delicate and requires a description of the asymptotic behaviour of solutions of the forward problem given recently by Lindblad in \cite{L17}.

A scattering theory  for the wave equation on \emph{black hole spacetimes} in \emph{non-degenerate}  energy spaces is established in \cite{DRS18} (see references therein for previous work on this topic), and in the extremal case in \cite{AAG19}. Here already the existence of finite energy solutions (\emph{without} weights) to the backwards problem is nontrivial due to the redshift and superradiance \cite{DRS16}. The radiation field on Schwarzschild was also defined in \cite{BB15}. For \emph{exponentially} decaying scattering data a scattering construction for the Einstein equations was given in \cite{DHR14}.

The global existence problems from scattering data at infinity considered in this paper are \emph{well-posed}. For a study of uniqueness properties of solutions with a given radiation field in an \emph{ill-posed} setting see also \cite{ASS16}.

In terms of the \emph{method} the weighted estimates in this paper for the backward problem are derived by using a multiplier of the form
\begin{equation}
  \label{eq:intro:K:s}
K_s=\langle t+r\rangle^{s}(\partial_t+\partial_r)+\langle t-r\rangle^{s}(\partial_t-\partial_r)\,,
\end{equation}
which is a generalisation of the classical conformal Morawetz vectorfield:
\begin{equation}
  \label{eq:intro:K} K=(t+r)^2\bigl(\partial_t+\partial_r\bigr)+(t-r)^2\bigl(\partial_t-\partial_r\bigr)\,.
\end{equation}
The latter is intimately tied to the geometry of Minkowski space.
In view of applications to \emph{black hole spacetimes}, Dafermos and Rodnianski introduced in \cite{DR09} estimates for the forward problem of the linear  wave equation on asymptotically flat spacetimes which as opposed to the conformal Morawetz estimate associated to \eqref{eq:intro:K}  avoid weights in $t$.
In fact the \emph{ $r^p$-weighted estimates} are derived in the wave zone from multipliers of the form $r^p(\partial_t+\partial_r)$. For extensions of this method, and the resulting hierarchy of estimates see \cite{S13, M16} as well as \cite{AAG18a,AAG18b}.
In the context of this paper the weights in $\langle t-r\rangle$
in \eqref{eq:intro:K:s}  are essential, and not dropped from \eqref{eq:intro:K}.

Recently  the $r^p$-weighted estimates were also applied  to prove global existence for a large class of wave equations satisfying the \emph{weak null condition} in \cite{K18}. The forward asymptotics of a class of quasilinear wave equations satisfying the weak null condition are analysed in \cite{DP20}.

In addition to the original proof of the stability of Minkowski space \cite{CK93}, and the proof in wave coordinates \cite{LR05,LR10}, another proof in generalised harmonic gauge was given in \cite{HV17}. Therein a related set of vectorfields to \eqref{eq:intro:K} of the form $r(\partial_t+\partial_r)$, $(r-t)(\partial_t-\partial_r)$ are used by Hintz and Vasy to derive weighted spacetime estimates, and are used to establish a \emph{polyhomogeneous} expansion of the metric near infinity.

\begin{description}
\item[Acknowledgements.] H.L. is supported in part by NSF grant DMS--1500925. V.S.~would like to acknowledge the support of ERC consolidator Grant 725589 EPGR. We would like to thank the \emph{Institut Henri Poincar\'e} for their hospitality in the fall semester 2016, and the \emph{Institut Mittag Leffler} in the fall semester 2019.
  We especially would like to thank Lili He for a careful reading of an earlier version of the manuscript which led to several corrections.
\end{description}

\section{Energy estimates}

\subsection{Weighted space-time energy estimates from infinity}
\label{sec:weighted}

We start by deriving several weighted energy estimates for the equation
\begin{equation}\label{eq:wave:F}
  \Box \phi = F
\end{equation}
using the energy momentum tensor
\begin{equation}
  T_{\mu\nu}[\phi]=\partial_\mu \phi\,\partial_\nu \phi - \frac{1}{2} m_{\mu\nu}\, \partial^\alpha\phi\partial_\alpha\phi\,.
\end{equation}

For any given $t_2>t_1$, $R>(t_2-t_1)$ let us denote by
\begin{equation*}
  \mathcal{D}=\bigcup_{t_1\leq \tau\leq t_2}\Sigma_\tau^{R+(\tau-t_2)}
\end{equation*}
where $\Sigma_\tau^R=\{ (t,x):t=\tau, |x|\leq R_+ \}$. Note that $\partial \mathcal{D}=\Sigma_{t_1}^{R-(t_2-t_1)}\cup\Sigma_{t_2}^{R}\cup C_R$, where $C_R=\{(t,x):t-|x|=t_2-R,t_1<t<t_2\}$ is a (truncated) outgoing null hypersurface. $\mathcal{D}$ is the backward domain of dependence of a ball of radius $R$ in $t=t_2$, see Fig.~\ref{fig:SigmaR}.

\begin{figure}[tb]
  \centering
  \includegraphics{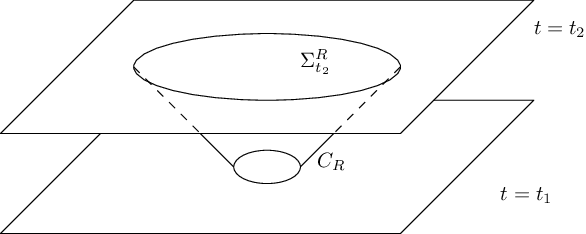}
  \caption{Depiction of domain $\mathcal{D}$.}
  \label{fig:SigmaR}
\end{figure}

In the following we will also denote by
\begin{align}
  |\partial \phi|^2 :=&   (\partial_t\phi)^2+|\nabla\phi|^2\,,\\
    |\pl \phi|^2 :=& (\partial_t\phi+\partial_r\phi)^2+\vert\nablas \phi\rvert^2\,,\qquad |\nablas\phi|^2:=|\nabla\phi|^2-\bigl\lvert\frac{x}{|x|}\cdot\nabla\phi\bigr\rvert^2\,.
  \end{align}
 Thus $\pl$ denotes the derivatives tangential to the lightcone.

\begin{prop} \label{prop:w}
Let $\phi$ be a solution to \eqref{eq:wave:F} on $\mathcal{D}$.
Then
\begin{multline}
  \int_{\Sigma_{t_2}^{R}}|\partial \phi|^2 \, w(q)\, \ud x
  +  \int_{\mathcal{D}}  2F \partial_t\phi \, w(q)\,\ud x\ud t+  \int_{\mathcal{D}}  |\pl \phi|^2\, w^\prime (q)\,\ud x\ud t\\
  =\int_{\Sigma_{t_1}^{R-(t_2-t_1)}}  |\partial \phi|^2 \, w(q)\,\ud x+\int_{t_1}^{t_2}\int_{\partial \Sigma_{t}^{R-(t_2-t)}}|\pl \phi|^2\, w(q)\, dS \ud t
\end{multline}
where $w(q)$ is an arbitrary function of $q=r-t$.
\end{prop}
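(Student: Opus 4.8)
The plan is to establish the identity by the multiplier (vector-field) method, using the weighted multiplier $2w(q)\,\partial_t\phi$ and then the divergence theorem on the region $\mathcal{D}$. I would contract $\Box\phi=F$ against $2w(q)\,\partial_t\phi$ and rewrite the left-hand side as an exact spacetime divergence plus a bulk remainder. The only property of the weight that matters is that it is a function of $q=r-t$ alone, so that $\partial_t w=-w^\prime(q)$ and $\nabla w=w^\prime(q)\,\hat x$; this is exactly what makes the remainder proportional to $w^\prime$ and built only out of derivatives tangential to the outgoing cones.

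Carrying this out, the temporal contribution is $-2w\partial_t\phi\,\partial_t^2\phi=\partial_t(-w(\partial_t\phi)^2)-w^\prime(\partial_t\phi)^2$, and the spatial contribution is $2w\partial_t\phi\,\Delta\phi=\nabla\cdot(2w\partial_t\phi\,\nabla\phi)-\partial_t(w|\nabla\phi|^2)-w^\prime|\nabla\phi|^2-2w^\prime\partial_r\phi\,\partial_t\phi$, where $\nabla w\cdot\nabla\phi=w^\prime\partial_r\phi$. Grouping the exact $t$-derivatives into the energy density $w|\partial\phi|^2$, the exact spatial divergence into the flux $2w\partial_t\phi\,\nabla\phi$, and the remaining $w^\prime$-terms into the bulk, the bulk collects — after completing the square in $\partial_t\phi+\partial_r\phi$ and isolating the angular part $|\nablas\phi|^2$ — into $w^\prime$ times a positive combination of the lightcone-tangential derivatives $\pl\phi$ and $\nablas\phi$; pinning down the exact numerical coefficient in front is the one place where the constants must be tracked carefully.

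Next I would integrate this pointwise divergence identity over $\mathcal{D}$ and apply the divergence theorem, noting that $\partial\mathcal{D}$ consists of the top slice $\Sigma_{t_2}^{R}$, the bottom slice $\Sigma_{t_1}^{R-(t_2-t_1)}$, and the truncated outgoing null cone $C_R$. On the two spacelike slices the outward normals are $\pm\partial_t$, and contracting the energy current against them produces the terms $\int|\partial\phi|^2\,w\,\ud x$ on the two sides of the identity. Since $R>t_2-t_1$, the region $\mathcal{D}$ contains the spatial origin in its interior for every $t\in[t_1,t_2]$, so there is no inner boundary and the coordinate singularity of the radial/angular splitting at $r=0$ contributes nothing.

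The hard part will be the flux through the null hypersurface $C_R$, where $q$ is constant. There I would compute the outward Euclidean conormal, proportional to $(-1,\hat x)$, together with the induced measure $\sqrt2\,r^2\,\ud t\,\ud S(\omega)$, and then verify the key cancellation: contracting the energy current with this null direction annihilates the transversal derivative $\partial_t-\partial_r$ and leaves precisely $|\pl\phi|^2\,w$, integrated against $\ud S=r^2\,\ud S(\omega)$ over the spheres $\partial\Sigma_t^{R-(t_2-t)}$. That a null flux controls only the good tangential derivatives is the structurally essential point of the identity and the step most susceptible to sign and measure errors; the rest is bookkeeping.
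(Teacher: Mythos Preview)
Your proposal is correct and follows essentially the same route as the paper: both use the multiplier $W=w(q)\partial_t$ and apply the divergence theorem on $\mathcal{D}$, with the paper packaging the computation in terms of the stress-energy tensor $T_{\mu\nu}$ and the deformation tensor $\pi_{\mu\nu}=\tfrac{1}{2}\mathcal{L}_W m_{\mu\nu}$ evaluated in a null frame, while you carry out the equivalent calculation by hand. The key structural points --- that the bulk involves only $w'(q)$ times lightcone-tangential derivatives, and that the flux through $C_R$ retains only $|\pl\phi|^2$ --- are identified correctly in both.
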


\begin{proof}
  Set $W=w(q)\partial_t$ where $w$ is only a function of $q=r-t$, and
  \begin{equation*}
    J_\mu = T_{\mu\nu} W^\nu
  \end{equation*}
then
\begin{equation*}
  \nabla^\mu J_\mu= F \,(W\phi) + T_{\mu\nu}\pi^{\mu\nu}
\end{equation*}
where
\begin{equation*}
  \pi_{\mu\nu}=\frac{1}{2}\mathcal{L}_W m_{\mu\nu}
\end{equation*}
It makes sense to write this out in a null frame $(L=\partial_t+\partial_r,\Lb=\partial_t-\partial_r,e_A:A=1,2)$
\begin{equation*}
  T_{\mu\nu}\pi^{\mu\nu}= 2 m^{L\Lb}m^{\Lb L} T(L,\Lb)\pi(\Lb,L)+m^{L\Lb}m^{L\Lb}T(L,L)\pi(\Lb,\Lb)
\end{equation*}
where we used that the only non-vanishing components of the deformation tensor of $W$ are
\begin{gather*}
  \pi(\Lb,L)=\frac{1}{2}(\Lb w) m(\partial_t,L)=\partial_q w,\\
  \pi(\Lb,\Lb)=(\Lb w)m(\partial_t,\Lb)=2\partial_q w .
\end{gather*}
Thus
\begin{equation*}
  T_{\mu\nu}\pi^{\mu\nu}= \frac{1}{2}(\partial_q w)|\nablas\phi|^2+\frac{1}{2}(\partial_q w) (L\phi)^2\,.
\end{equation*}
Finally the boundary terms are
\begin{gather*}
  T(W,\partial_t)=w(q) T(\partial_t,\partial_t)=\frac{1}{2} w(q) \bigl( (\partial_t\phi)^2+|\nabla\phi|^2\bigr),\\
  T(W,L)=w(q)T(\partial_t,L)=\frac{1}{2}w(q)\bigl((L\phi)^2+|\nablas\phi|^2\bigr)
\end{gather*}
and thus we obtain by Stokes theorem
\begin{multline*}
  \int_{\mathcal{D}} F(W\phi)+ \frac{1}{2}(\partial_q w)|\nablas\phi|^2+\frac{1}{2}(\partial_q w) (L\phi)^2\,  \ud x\ud t=\\=\int_{\mathcal{D}}\nabla^\mu J_\mu\, \ud x \ud t
  = \int_{\mathcal{D}}d{}^\ast J=\int_{\partial\mathcal{D}}{}^\ast J=\int_{\Sigma_{t_2}^{R-(t_2-t_1)}} J^0-\int_{\Sigma_{t_1}^R} J^0+\int_{C_R}{}^\ast J \\
  = -\int_{\Sigma_{t_2}^{R}} T(W,\partial_t)+\int_{\Sigma_{t_1}^{R-(t_2-t_1)}} T(W,\partial_t)+\int_{t_1}^{t_2}\int_{\mathbb{S}^2} T(W,L)\, \dm{\gammac}\ud t,
\end{multline*}
which completes the proof.
\end{proof}

The following two corollaries correspond to different choices of the function $w$.

\begin{cor}\label{cor:w:D}
  Let $R>0$, $t_2>t_1$, and  $\phi$ be a solution to \eqref{eq:wave:F} on $\mathcal{D}$. Then for all $\mu>0$,
  \begin{multline}
    \lVert \partial \phi \rVert_{\mathrm{L}^2(\Sigma_{t_1}^{R-(t_2-t_1)})}+\Bigl(\int_{\mathcal{D}} |\pl \phi|^2 \frac{\mu \,\ud x\ud t/4}{(1+|q|)^{1+2\mu}} \Bigr)^{\!1/2} +\Bigl(\sup_{R'\leq R} \int_{C_{R'}}|\pl \phi|^2\, dS \ud t \Bigr)^{\!1/2}\\
    \lesssim  \lVert \partial \phi \rVert_{L^2(\Sigma_{t_2}^R)} + \int_{t_1}^{t_2}  \lVert F \rVert_{L^2(\Sigma_t^{R+(t-t_2)})}\, \ud t.
      \end{multline}
\end{cor}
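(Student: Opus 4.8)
The plan is to apply Proposition~\ref{prop:w} with a bounded but strictly \emph{decreasing} weight $w$, and then to absorb the resulting spacetime error term by a Gr\"onwall argument over the nested subdomains of $\mathcal{D}$. For $\mu>0$ I would fix $w'(q)=-\mu(1+|q|)^{-1-2\mu}$ and normalise by $w(+\infty)=1$, so that
\[
  w(q)=1+\int_q^\infty \mu(1+|q'|)^{-1-2\mu}\,\ud q'.
\]
Since $\int_{\mathbb{R}}\mu(1+|q'|)^{-1-2\mu}\,\ud q'=1$ for \emph{every} $\mu>0$, this $w$ satisfies $1\le w(q)\le 2$ with constants independent of $\mu$; in particular $\lVert\partial\phi\rVert_{L^2(\Sigma_t)}^2\le\int_{\Sigma_t}|\partial\phi|^2\,w\,\ud x\le 2\lVert\partial\phi\rVert_{L^2(\Sigma_t)}^2$. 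The decreasing sign is the crucial point: in the identity of Proposition~\ref{prop:w} the good bulk term carries the factor $w'(q)$, so only a decreasing $w$ sends it, with the correct sign, to the same side as the $\Sigma_{t_1}$ energy and the flux through $C_R$.

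Plugging this $w$ into Proposition~\ref{prop:w} and moving the bulk term (now negative, as $w'<0$) to the right, I obtain
\[
  \int_{\Sigma_{t_1}}\!|\partial\phi|^2 w\,\ud x+\int_{C_R}\!|\pl\phi|^2 w\,dS\ud t+\int_{\mathcal{D}}\tfrac14\bigl(|\pl\phi|^2+|\nablas\phi|^2\bigr)|w'|\,\ud x\ud t=\int_{\Sigma_{t_2}}\!|\partial\phi|^2 w\,\ud x+\int_{\mathcal{D}}2F\partial_t\phi\,w\,\ud x\ud t.
\]
Because $|w'(q)|=\mu(1+|q|)^{-1-2\mu}$ and $|\pl\phi|^2+|\nablas\phi|^2\ge|\pl\phi|^2$, the third left-hand term dominates the weighted bulk quantity in the statement, while the first two are exactly the $\Sigma_{t_1}$ energy and the null flux; using $1\le w\le 2$ the whole left side controls all three terms of the conclusion, and the first right-hand term is $\le 2\lVert\partial\phi\rVert_{L^2(\Sigma_{t_2}^R)}^2$.

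It remains to handle the forcing term, and this is the one genuine obstacle. Cauchy--Schwarz on each slice together with $w\le2$ gives
\[
  \Bigl|\int_{\mathcal{D}}2F\partial_t\phi\,w\,\ud x\ud t\Bigr|\lesssim\int_{t_1}^{t_2}\lVert F\rVert_{L^2(\Sigma_t^{R+(t-t_2)})}\,\lVert\partial\phi\rVert_{L^2(\Sigma_t^{R+(t-t_2)})}\,\ud t,
\]
which involves the energy at all intermediate times, so the estimate does not close directly. I would therefore run the same identity on the truncated subdomains $\mathcal{D}_\sigma=\bigcup_{\sigma\le t\le t_2}\Sigma_t^{R+(t-t_2)}$ for $\sigma\in[t_1,t_2]$. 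Writing $E(\sigma)=\lVert\partial\phi\rVert_{L^2(\Sigma_\sigma^{R-(t_2-\sigma)})}^2$ and $M=\sup_{t_1\le t\le t_2}E(t)^{1/2}$, and discarding the nonnegative bulk and flux terms, one gets $E(\sigma)\lesssim\lVert\partial\phi\rVert_{L^2(\Sigma_{t_2}^R)}^2+M\int_{t_1}^{t_2}\lVert F\rVert_{L^2}\,\ud t$ for every $\sigma$; taking the supremum yields the quadratic inequality $M^2\lesssim\lVert\partial\phi\rVert_{L^2(\Sigma_{t_2}^R)}^2+M\int_{t_1}^{t_2}\lVert F\rVert_{L^2}\,\ud t$, whence $M\lesssim\lVert\partial\phi\rVert_{L^2(\Sigma_{t_2}^R)}+\int_{t_1}^{t_2}\lVert F\rVert_{L^2}\,\ud t$.

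Feeding this uniform bound on $\sup_t\lVert\partial\phi\rVert_{L^2(\Sigma_t)}$ back into the forcing estimate and then into the rearranged identity simultaneously controls the $t_1$ energy, the weighted bulk, and the flux on $C_R$, giving the claimed inequality after taking square roots. Finally, for the supremum over $R'\le R$ in the flux term I would apply the very same identity on the nested subdomains built from radius $R'$: each such flux is bounded by the data and forcing on the larger domain $\mathcal{D}$, uniformly in $R'$. The sign bookkeeping and the slice-wise Cauchy--Schwarz are routine; the only real work is the Gr\"onwall closure just described.
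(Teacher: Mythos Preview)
Your proof is correct and follows essentially the same approach as the paper: choose a bounded decreasing weight with $w'(q)\sim-\mu(1+|q|)^{-1-2\mu}$, apply Proposition~\ref{prop:w}, and close the forcing term via the standard energy inequality. The only cosmetic differences are that the paper's weight has $w_0'=-2\mu(1+|q|)^{-1-2\mu}$ (a harmless factor of $2$) and that the paper simply cites ``the standard energy estimate'' for the bound $\sup_t\lVert\partial\phi\rVert_{L^2}\lesssim\lVert\partial\phi\rVert_{L^2(\Sigma_{t_2}^R)}+\int\lVert F\rVert$, whereas you spell out the Gr\"onwall/supremum closure explicitly.
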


\begin{proof}
  Let us choose $w=w_0$, where
  \begin{equation}\label{eq:w}
    w_0(q) = \begin{cases} 1+(1+q)^{-2\mu}, & q>0, \\ 2+2\mu \int_q^0(1+|s|)^{-1-2\mu}\ud s, & q<0,
    \end{cases}
 \end{equation}
 then $1\leq w_0(q)\leq 3$, and in both cases
 \begin{equation}
   w_0^\prime(q) = - 2\mu (1+|q|)^{-1-2\mu}
 \end{equation}
 Note also that since $\mathcal{D}$ lies to the future of $C_R$, we have $q<R-t_2$.

 Therefore by Prop.~\ref{prop:w} we obtain
\begin{multline*}
  \int_{\Sigma_{t_1}^{R-(t_2-t_1)}} \frac{1}{2} w_0(q) |\partial \phi|^2\ud x+\int_{C_R} \frac{1}{2}w_0(R-t_2)|\pl \phi|^2 \ud S\ud t
  +  \int_{\mathcal{D}} \frac{\mu}{4}\frac{ |\pl \phi|^2}{(1+|q|)^{1+2\mu}}\, \ud x \ud t\\\leq
    \int_{\Sigma_{t_2}^{R}} \frac{1}{2} w_0(q) |\partial \phi|^2\ud x
  +  \int_{\mathcal{D}} F w_0(q) \partial_t\phi\, \ud x\ud t .
\end{multline*}
Also by the standard energy estimate (corresponding to choosing $w\equiv 1$ in Prop.~\ref{prop:w})
\begin{multline*}
  \Bigl\lvert  \int_{\mathcal{D}} F  \partial_t\phi\, \ud x\ud t \Bigr\rvert \leq  \int_{t_1}^{t_2} \lVert \partial_t \phi \rVert_{L^2(\Sigma_t^{R+(t-t_2)})} \lVert F \rVert_{L^2(\Sigma_t^{R+(t-t_2)})} \ud t\\
  \leq  C \Bigl( \lVert \partial \phi \rVert_{L^2(\Sigma_{t_2}^R)} + \int_{t_1}^{t_2}  \lVert F \rVert_{L^2(\Sigma_t^{R+(t-t_2)})} \ud t \Bigr)^2.
\end{multline*}
The two inequalities combined give the estimate for the second term in the statement of the Corollary, and the standard energy estimate the first term.
\end{proof}

In the following Corollary we distinguish between the exterior and interior, and introduce the notation:
\begin{gather}
  \Sigma_t^i=\bigl\{(t,x)\in\Sigma_t:|x|\leq t\bigr\},\qquad\Sigma_t^e=\Sigma_t\setminus\Sigma_t^i,\\
    \mathcal{D}^i = \bigl\{(t,x)\in\mathcal{D}: |x|\leq t\bigr\},\qquad \mathcal{D}^e = \mathcal{D}\setminus \mathcal{D}^i .
\end{gather}

\begin{cor}\label{cr:weightsspacetimeenergy}
  For any $\mu\geq 0$ and $\gamma\geq-\frac{1}{2}$,
we have
\begin{multline*}
  \int_{\Sigma_{t_1}}\!\!|\partial \phi|^2 w_-^\gamma\ud x +
  \frac{\mu}{2}\int_{\mathcal{D}^e}\frac{ |\pl \phi|^2 }{(1\!+|q|)^{1+2\mu}}\ud x\ud t+\frac{1\!+2\gamma}{4}\int_{\mathcal{D}^i} (1\!+q_-)^{2\gamma} |\pl \phi|^2  \ud x\ud t\\
  +\sup_{R'\leq R} \int_{C_{R'}}\!\!|\pl \phi|^2\,w_-^\gamma dS \ud t   \leq \int_{\Sigma_{t_2}}  |\partial \phi|^2 w_-^\gamma\ud x
  +  2\int_{\mathcal{D}} F \partial_t\phi w_-^\gamma\ud x\ud t
\end{multline*}
where $w_-^\gamma=(1+q_-)^{1+2\gamma}$, $q_-:=-\min\{q,0\}$.
\end{cor}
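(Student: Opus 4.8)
The plan is to apply the weighted identity of Proposition~\ref{prop:w} with a single weight that combines the Morawetz weight $w_0$ from \eqref{eq:w} with the decay weight $w_-^\gamma$. The natural choice is the product
\[
 w(q)=w_0(q)\,(1+q_-)^{1+2\gamma},
\]
for which $1\le w_0\le 3$ guarantees $w_-^\gamma\le w\le 3\,w_-^\gamma$. This comparability is what lets the boundary integrals produced by Proposition~\ref{prop:w} on $\Sigma_{t_1}$, $\Sigma_{t_2}$ and on the cones $C_{R'}$ be identified, up to harmless constants, with the $w_-^\gamma$-weighted integrals in the statement: on the left (the $\Sigma_{t_1}$ and $C_{R'}$ terms) one uses $w\ge w_-^\gamma$, and on the right the bound $w\le 3\,w_-^\gamma$ is absorbed. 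I would rearrange the identity so that the spacetime integral, which carries the coefficient $-\tfrac14 w'$ in front of $|\pl \phi|^2+|\nablas \phi|^2$, sits on the left together with the initial-slice and null-flux terms.

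The crux is the pointwise lower bound on this coefficient. Using $w_0'(q)=-2\mu(1+|q|)^{-1-2\mu}$ and differentiating the product, one finds in the interior region $q<0$
\[
 -\tfrac14 w'(q)=\tfrac{\mu}{2}(1+q_-)^{2\gamma-2\mu}+\tfrac{w_0\,(1+2\gamma)}{4}(1+q_-)^{2\gamma}\ \ge\ \tfrac{\mu}{2}(1+q_-)^{-1-2\mu}+\tfrac{1+2\gamma}{4}(1+q_-)^{2\gamma},
\]
the right-hand side being exactly the integrand claimed. The only nontrivial step is $(1+q_-)^{2\gamma-2\mu}\ge(1+q_-)^{-1-2\mu}$, and this is precisely where the hypothesis $\gamma\ge-\tfrac12$ is used: it forces $2\gamma-2\mu\ge-1-2\mu$, so that multiplying $w_0$ by the growing factor $(1+q_-)^{1+2\gamma}$ does not weaken the Morawetz term below the rate $(1+|q|)^{-1-2\mu}$. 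Since $-\tfrac14 w'\ge0$, I can discard the nonnegative $|\nablas \phi|^2$ contribution and retain $(-\tfrac14 w')|\pl \phi|^2$, which then dominates the stated spacetime integrand.

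For the forcing term I would keep $2\int_{\mathcal{D}}F\,\partial_t\phi\,w$ on the right and proceed as in the proof of Corollary~\ref{cor:w:D}, estimating it by Cauchy--Schwarz against the standard ($w\equiv1$) energy whenever it has to be absorbed; at that level the precise weight is immaterial and may be replaced by $w_-^\gamma$ at the cost of a constant. Alternatively, to obtain the literal coefficients one can take $w=w_-^\gamma$ in Proposition~\ref{prop:w} (which reproduces the $\gamma$-term together with the exact boundary and forcing weights) and simply add the Morawetz estimate of Corollary~\ref{cor:w:D}, whose bounded $w_0$-weighted data are dominated by the $w_-^\gamma$-weighted data since $w_-^\gamma\ge1$. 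The passage from truncated balls to the full slices $\Sigma_{t_1},\Sigma_{t_2}$ and to $\sup_{R'\le R}\int_{C_{R'}}$ is then obtained by applying the estimate on each backward domain of dependence associated with $R'\le R$, using nonnegativity and monotonicity of the interior terms and letting $R\to\infty$, the contributions at spatial infinity vanishing under appropriate decay.

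The step I expect to be the main obstacle is the positivity bookkeeping of the previous two paragraphs: one must check that attaching the unbounded weight $(1+q_-)^{1+2\gamma}$ to $w_0$ keeps $-\tfrac14 w'$ nonnegative and bounded below by the claimed coefficient for \emph{every} $\mu\ge0$, which reduces to the sharp elementary inequality encoded by $\gamma\ge-\tfrac12$. Care is also needed in the exterior region $q>0$, where $w_-^\gamma$ is constant and the decay weight contributes nothing, so that the genuine gain sits entirely in the interior $q<0$.
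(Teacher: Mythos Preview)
Your approach is correct and very close in spirit to the paper's own argument: both amount to a single application of Proposition~\ref{prop:w} with a well-chosen weight $w(q)$ whose derivative produces the two bulk terms. The only difference is in the specific weight: the paper takes the \emph{piecewise} (additive) choice
\[
  w(q)=\begin{cases} 1+(1+|q|)^{-2\mu} & q>0,\\ 1+(1+|q|)^{1+2\gamma} & q<0, \end{cases}
\]
and splits $\mathcal{D}$ into interior and exterior regions to read off the two bulk coefficients separately, whereas you take the \emph{product} $w=w_0\,w_-^\gamma$. Your product construction is slightly more systematic and in fact yields a marginally stronger interior bulk term (both the $\mu$- and the $\gamma$-contributions appear there simultaneously), while the paper's choice makes the interior/exterior bookkeeping a bit cleaner. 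In either case the hypothesis $\gamma\ge -\tfrac12$ enters exactly where you identify it, and the comparability $w_-^\gamma\le w\le 3w_-^\gamma$ (for your weight) or $w_-^\gamma\le w\le 2w_-^\gamma$ (for the paper's) handles the boundary and forcing terms up to harmless constants.
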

\begin{proof}
For  $\mu\geq 0$ and $\gamma\geq -\frac{1}{2}$ let
  \begin{equation}\label{eq:w}
    w(q) = \begin{cases} 1+(1+|q|)^{-2\mu}, & q>0, \\ 1+(1+|q|)^{1+2\gamma}, & q<0,
    \end{cases}
  \end{equation}
  in which case
  \begin{equation}
    w^\prime(q) =
   \begin{cases}
      - 2\mu (1+|q|)^{-1-2\mu}, & q>0, \\
      - (1\!+2\gamma)(1+|q|)^{2\gamma}, & q<0.
   \end{cases}
 \end{equation}

 We obtain
\begin{multline*}
  \int_{\Sigma_{t_1}^e}\!\! |\partial \phi|^2\ud x +\int_{\Sigma_{t_1}^i} \!\!|\partial \phi|^2 w(q) \ud x
  + \int_{\mathcal{D}^i} \frac{1\!+2\gamma}{4} (1\!+|q|)^{2\gamma} |\pl \phi|^2 \, \ud x \ud t +  \int_{\mathcal{D}^e} \frac{ |\pl \phi|^2\mu/2}{(1+|q|)^{1+2\mu}}\, \ud x \ud t\\
    \leq \int_{\Sigma_{t_2}^i}  w(q) |\partial \phi|^2\ud x +2\int_{\Sigma_{t_2}^e} |\partial \phi|^2\ud x
  +  2\int_{\mathcal{D}} F w(q) \partial_t\phi\,\ud x\ud t
\end{multline*}
which implies the statement of the Corollary.
\end{proof}

\begin{remark}
  The choice of $w(q)$ is inferred from the forward problem with the role of the exterior $q>0$ and the interior $q<0$ interchanged \cite{LR10}.
\end{remark}

\subsection{Conformal energy estimates from infinity}
\label{sec:conformal:energy}

In this section we derive a ``fractional Morawetz estimate'' for the backward problem.
This estimate will not be applied to the solutions  directly,
but rather to a remainder $\phi$ after subtraction of an approximate solution.

In this paper the following weighted generalized energy plays an important role:
\begin{equation}\label{eq:norm:k}
\|\phi(t_1,\cdot)\|_{k,s-1}:={\sum}_{|I|\leq k} \| \langle t-r\rangle^{s-1} (Z^I \phi)(t_1,\cdot)\|_{\mathrm{L}^2(\mathbb{R}^3)} \qquad (k\in\mathbb{N}, s\geq 1)
\end{equation}

We obtain control of the solution in this norm using a ``fractional Morawetz'' estimate from infinity.
For any $s\geq 1$ these estimates will involve the following ``conformal energy''  on $\Sigma_t$,
\begin{equation}\label{eq:energy:conformal}
E^s_R[\phi](t) = \frac{1}{2}\int_{\Sigma_t^R} \langle t+r\rangle^{2s}|(\pa_t+\pa_r)(r\phi)|^2 + \big( \langle t+r\rangle^{2s}\!+\langle t-r\rangle^{2s}\big) |\nabb
(r\phi)|^2 +  \langle t-r\rangle^{2s}|(\pa_t-\pa_r)(r\phi)|^2 \, \frac{dx}{r^2}
\end{equation}
and the corresponding flux through the cones $C_R$:
\begin{equation}\label{eq:flux:conformal}
F_R^s(t_1,t_2) = \frac{1}{2}\int_{t_1}^{t_2} \int_{\partial \Sigma_t^{R-(t_2-t)}} \langle t+r\rangle^{2s}|(\pa_t+\pa_r)(r\phi)|^2+\langle t-r\rangle^{2s} |\nabb (r\phi)|^2 dS(\omega) \ud t.
\end{equation}
We also denote by $E^s(t)=\lim_{R\to \infty} E^s_R(t)$, and
\begin{equation}
  E^s_{(k)}[\phi](t):=\sum_{|I|\leq k}E^s[Z^I\phi](t)\,.
\end{equation}

\begin{prop}[Fractional Morawetz estimate from infinity] \label{prop:frac:morawetz}
Let $t_1\leq t_2$,  then for $s\geq 1$
\begin{equation}\label{eq:energyestimate}
 E^s(t_1)^{{1}/{2}}+\sup_{R} F_R^s(t_1,t_2)^{1/2}\lesssim E^s(t_2)^{{1}/{2}}+\int_{t_1}^{t_2} \|\langle t+r\rangle^s \Box \phi(t,\cdot)\|_{L^2_x}\ud t\,.
\end{equation}
Moreover we have the  interior estimate
\begin{equation}\label{eq:energyestimateinterior}
 E^s_{R-(t_2-t_1)}(t_1)^{{1}/{2}}+F_R^s(t_1,t_2)^{1/2}\lesssim E^s_R(t_2)^{{1}/{2}}+\int_{t_1}^{t_2} \|\langle t+r\rangle^s \Box \phi(t,\cdot)\|_{L^2_x\big( \Sigma_t^{R-(t_2-t)}\big)} dt
\end{equation}
and the exterior estimate
\begin{equation}\label{eq:energyestimateexterior}
 E^s_{ \mathbb{R}^3\setminus \Sigma_t^{R-(t_2-t_1)}}(t_1)^{{1}/{2}}\lesssim F_R^s(t_1,t_2)^{1/2} +E^s_{\mathbb{R}^3\setminus \Sigma_t^{R}}(t_2)^{{1}/{2}}+\int_{t_1}^{t_2} \|\langle t+r\rangle^s \Box \phi(t,\cdot)\|_{L^2_x\big( \mathbb{R}^3\setminus \Sigma_t^{R-(t_2-t)}\big)} dt .
\end{equation}
\end{prop}

Once one has obtained an estimate for the ``conformal energy'' this translates into a statement for the norms \eqref{eq:norm:k}.
The proof of the following proposition is given at the end of this section.

\begin{prop}\label{prop:normbyenergy} Let  $\lim_{R\to\infty}\sup_{\partial\Sigma_{t_1}^R} r^{s+1/2}|\phi|=0$. Then we have
\begin{equation}\label{eq:normestimate}
  \|\phi(t,\cdot)\|_{1,s-1}
  \lesssim \|\phi(t,\cdot)\|_{1,+,s-1} \,,
\end{equation}
\beq\label{eq:normenergyestimate}
\|\phi(t,\cdot)\|_{1,+,s-1}\lesssim E^s(t)^{1/2},
\eq

where
\begin{multline}\label{eq:norm:s}
    \|\phi(t,\cdot)\|_{1,+,s-1}^2 := \\ \!\int_{\mathbb{R}^3}\! \langle\, t\!+r\rangle^{2s} \Big( \big|(\pa_t\!+\pa_r)(r \phi)\big|^2\!\! + |\nabb (r \phi)|^2\Big)
 +  \langle\, t\!-\!r\rangle^{2s}\Big(\big|(\pa_t\!-\!\pa_r)(r \phi)\big|^2\!\! + \phi^2\!+\frac{(r\phi)^2}{\!\langle \, t\!-\!r\rangle^2\!} \Big)\, \frac{dx}{r^2} .
\end{multline}

\end{prop}

\bigskip
Proposition~\ref{prop:frac:morawetz} is central to this paper and follows --- as we will now prove --- from  the following generalisation of the classical conformal Morawetz estimate for the wave equation on Minkowski space, which corresponds to the choice $s=1$.

\begin{prop}\label{prop:frac:morawetz:id}
  For $s\geq 1$ let $E_R^s$, and $F_R^s$ be defined by \eqref{eq:energy:conformal}, and \eqref{eq:flux:conformal} respectively.
Then for $t_1\leq t_2$
\begin{equation}\label{eq:frac:morawetz:id}
E^s_{R-(t_2-t_1)}(t_1)+F_R^s(t_1,t_2)=E^s_R(t_2)+\int_{t_1}^{t_2}\int_{ \Sigma_t^{R-(t_2-t)}} r^{-1}
X (r\phi)\square \phi -r\pa_r \Omega|\nabb\phi|^2 dx dt
\end{equation}
where $X=\langle t+r \rangle^{2s}(\partial_t+\partial_r)+\langle t-r \rangle^{2s}(\partial_t-\partial_r)$ and $\Omega=(\langle t+r\rangle^{2s}-\langle t-r\rangle^{2s})/r$.
\end{prop}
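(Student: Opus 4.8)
The plan is to reduce to the radial unknown $\psi=r\phi$, for which a direct computation in spherical coordinates gives the operator identity
$r\,\square\phi=(-\pa_t^2+\pa_r^2)\psi+r^{-2}\Delta_\omega\psi=-L\Lb\psi+r^{-2}\Delta_\omega\psi$,
with $L=\pa_t+\pa_r$, $\Lb=\pa_t-\pa_r$ and $\Delta_\omega$ the Laplacian on the unit sphere. Since $dx/r^2=dr\,\ud S(\omega)$, every quantity in $E_R^s$ and $F_R^s$ is a flux of $\psi$ in the measure $dr\,\ud S(\omega)$, and $r^{-1}X(r\phi)\square\phi\,dx=(X\psi)(r\square\phi)\,dr\,\ud S(\omega)$ with $X\psi=a\,L\psi+b\,\Lb\psi$, abbreviating $a=\langle t+r\rangle^{2s}$, $b=\langle t-r\rangle^{2s}$, so that $\Omega=(a-b)/r$. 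The whole proposition then amounts to integrating over $\mathcal D$ a pointwise identity that expresses $(X\psi)(r\square\phi)$ as a spacetime divergence (whose boundary fluxes are $E^s$ and $F^s$) plus the single remainder $r\pa_r\Omega\,|\nabb\phi|^2$.

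For the radial part $-(X\psi)L\Lb\psi$ I would use $[L,\Lb]=0$ to write $L\psi\,L\Lb\psi=\tfrac12\Lb(L\psi)^2$ and $\Lb\psi\,L\Lb\psi=\tfrac12 L(\Lb\psi)^2$, so that $-(X\psi)L\Lb\psi=-\tfrac12\Lb\!\left(a(L\psi)^2\right)-\tfrac12 L\!\left(b(\Lb\psi)^2\right)+\tfrac12(\Lb a)(L\psi)^2+\tfrac12(Lb)(\Lb\psi)^2$. The decisive structural point is that $a$ depends only on $t+r$ and $b$ only on $t-r$, whence $\Lb a=0$ and $Lb=0$: the two would-be bulk terms drop and the radial part is an exact divergence in $(t,r)$. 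This is precisely why the weights $\langle t\pm r\rangle^{2s}$ are admissible for every $s$, and it is the only place the explicit form of $a,b$ is used.

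For the angular part $(X\psi)r^{-2}\Delta_\omega\psi$ I would integrate by parts on the sphere; since $a,b$ are constant on spheres and the unit-sphere gradient $\nabla_\omega$ commutes with $L,\Lb$, this produces $-r^{-2}\big(\tfrac a2 L|\nabla_\omega\psi|^2+\tfrac b2\Lb|\nabla_\omega\psi|^2\big)$ up to a sphere-divergence that integrates to zero. Integrating by parts once more in $(t,r)$ and using $L(a r^{-2})=(La)r^{-2}-2ar^{-3}$, $\Lb(b r^{-2})=(\Lb b)r^{-2}+2br^{-3}$ together with $La=2\pa_r a$ and $\Lb b=-2\pa_r b$, the non-divergence remainder reduces, after the substitutions $r^{-2}|\nabla_\omega\psi|^2=r^2|\nabb\phi|^2$ and $r^2\,dr\,\ud S(\omega)=dx$, to exactly $r\pa_r\Omega\,|\nabb\phi|^2$ in the measure $dx\,dt$; moving it onto the divergence side gives the $-r\pa_r\Omega|\nabb\phi|^2$ of \eqref{eq:frac:morawetz:id}. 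This is the one genuinely delicate point: the two commutator contributions carry different powers of $r$ (namely $r^{-2}$ and $r^{-3}$), and it is the combination dictated by the specific weights $a,b$ that packages them into the single factor $r\pa_r\Omega$; everything else is bookkeeping.

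Finally I would apply the divergence theorem on $\mathcal D$, whose boundary $\pa\mathcal D=\Sigma_{t_1}^{R-(t_2-t_1)}\cup\Sigma_{t_2}^R\cup C_R$ was recorded before Prop.~\ref{prop:w}. The boundary terms are read off from the null-frame components of the energy--momentum tensor of $\psi$: with $T_{LL}=(L\psi)^2$, $T_{L\Lb}=|\nabb\psi|^2$, $T_{\Lb\Lb}=(\Lb\psi)^2$, a time slice carries $T(X,\pa_t)=\tfrac12\big(a(L\psi)^2+(a+b)|\nabb\psi|^2+b(\Lb\psi)^2\big)$, whose integral is a multiple of $E^s$, while the outgoing cone $C_R$ carries $T(X,L)=a(L\psi)^2+b|\nabb\psi|^2$, whose integral is $F_R^s$; note that the angular weight automatically comes out as $a+b$ on the slices but only $b$ on the cone. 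Orienting $\mathcal D$ as the backward domain of dependence places the $t_1$-slice and the cone flux on the left and the $t_2$-slice and the bulk on the right, which after fixing the overall normalisation as in Prop.~\ref{prop:w} is precisely \eqref{eq:frac:morawetz:id}. No hypothesis at spatial infinity enters at this stage since $R$ is finite; the limit $R\to\infty$ and the decay assumption $\lim_{R\to\infty}\sup r^{s+1/2}|\phi|=0$ are needed only to pass from \eqref{eq:frac:morawetz:id} to Prop.~\ref{prop:frac:morawetz}.
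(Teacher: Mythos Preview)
Your argument is correct and complete, but it takes a genuinely different route from the paper. The paper works at the level of $\phi$ itself: it contracts the energy--momentum tensor $T_{\alpha\beta}[\phi]$ with $X$, computes the deformation tensor, and then introduces the \emph{modified current} $\widetilde{P}_\alpha=T_{\alpha\beta}X^\beta+\Omega\,\phi\,\phi_\alpha-\tfrac12\phi^2\,\Omega_\alpha$ (Lemma~\ref{lemma:div:P}); the lower-order correction is there precisely so that the boundary fluxes assemble into expressions in $r\phi$ rather than $\phi$. Lemma~\ref{lemma:P:flux} then carries out an integration by parts in $r$ on each boundary piece, producing extra boundary terms at the corners $\partial\Sigma_{t_1}^{R-(t_2-t_1)}$ and $\partial\Sigma_{t_2}^{R}$ that are shown to cancel between the spacelike and null fluxes. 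Your reduction to $\psi=r\phi$ and the $1{+}1$ identity $r\,\Box\phi=-L\Lb\psi+r^{-2}\Delta_\omega\psi$ bypasses both of these steps: the fluxes come out directly in terms of $L\psi,\Lb\psi,\nabb\psi$, no corner terms ever appear, and the key structural cancellation $\Lb a=Lb=0$ is exposed in one line. What you gain is transparency and brevity on flat space; what the paper's approach buys is a formulation (multiplier plus lower-order correction applied to $T_{\alpha\beta}[\phi]$) that ports to curved backgrounds where the substitution $\psi=r\phi$ no longer diagonalises the operator. One point you should make explicit when writing this up: the $r=0$ boundary contribution from $\pa_r$ in your $1{+}1$ divergence theorem vanishes because $a=b$ and $(L\psi)^2=(\Lb\psi)^2$ there, and $r^{-2}|\nabla_\omega\psi|^2$ is regular at the origin for smooth $\phi$.
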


For the proof, we consider a more general class of energy identities associated to the following multipliers $X$, and functions $f\in\mathrm{C}^3(\mathbb{R})$:
\begin{equation}\label{def:X}
  X=L_+ +L_-\,,\qquad L_\pm=f(t\pm r)(\pa_t\pm\pa_r)\,,\qquad \pa_r=\omega^i\pa_i\,.
\end{equation}
The classical conformal Morawetz vectorfield corresponds to the choice $f(v)=v^2$, while for the fractional Morawetz estimates we will choose
\begin{equation}\label{eq:f:frac}
  f(v)=\frac{1}{a}(1+v^2)^{a/2}\,,\qquad a\geq 2\,.
\end{equation}

For this section also see \cite{LS06b} where the forward version of the above theorem was proved.

Let $T_{\alpha\beta}$ be a energy momentum tensor for the linear wave equation,
$$
T_{\alpha\beta}= \phi_{\alpha}\phi_{\beta}-\frac{1}{2}
m_{\alpha\beta} m^{\mu\nu}\phi_\mu \phi_\nu,\quad
\phi_\alpha=\pa_\alpha\phi, \quad
 \pa_\alpha T^{\,\alpha}_{\,\,\,\,\beta}=(\square \phi)
 \phi_\beta,\quad T_\alpha^{\,\,\,
 \alpha}=-m^{\alpha\beta}\phi_\alpha\phi_\beta,
$$
where $\square=m^{\alpha\beta}\pa_\alpha\pa_\beta$.
Then for any vector field $X$,
$$
\pa_\alpha P^{\,\alpha}\!=(\pa_\alpha
T^{\,\alpha}_{\,\,\,\,\beta})X^\beta\!
 +\frac{1}{2}T^{\alpha\beta}{}^{(X)}\pi_{\alpha\beta},
 \quad\text{where}\quad P_{\!\alpha}\! =T_{\alpha\beta} X^\beta
 \quad \text{and}\quad  {}^{(X)}\pi_{\alpha\beta}=\pa_\alpha
 X_\beta+\pa_\beta X_\alpha
$$

\begin{lemma} With $X$ defined by \eqref{def:X} we have the identity
$$
\frac{{}^{(X)}\pi_{\alpha\beta}\!\!}{2}\,\, T^{\alpha\beta}\!\! = \Big(
\frac{f(t\!+r)-\!f(t\!-\!r)\!\!}{r}\,-f^{\,\prime\!}(t+r)-f^{\,\prime\!}(t-r)\Big)|\nabb
\phi|^2 - \frac{f(t\!+r)-\!f(t\!-\!r)\!}{r}\,m^{\mu\nu}\!\phi_\mu \phi_\nu
$$
\end{lemma}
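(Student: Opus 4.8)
The plan is to evaluate both tensors in the null frame $(L,\Lb,e_1,e_2)$, with $L=\pa_t+\pa_r$, $\Lb=\pa_t-\pa_r$ and $e_A$ tangent to the spheres of constant $r$, and then contract. I write the multiplier as $X=f(t+r)\,L+f(t-r)\,\Lb$ and recall the Minkowski frame relations $m(L,\Lb)=-2$, $m(e_A,e_B)=\delta_{AB}$, $m(L,L)=m(\Lb,\Lb)=m(L,e_A)=m(\Lb,e_A)=0$, together with the flat connection coefficients $\nabla_L L=\nabla_\Lb\Lb=\nabla_L\Lb=\nabla_\Lb L=0$ and $\nabla_{e_A}L=\tfrac1r e_A$, $\nabla_{e_A}\Lb=-\tfrac1r e_A$. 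Using $\pi(Y,Z)=m(\nabla_Y X,Z)+m(\nabla_Z X,Y)$ and the facts $Lf(t+r)=2f'(t+r)$, $Lf(t-r)=0$, $\Lb f(t-r)=2f'(t-r)$, $\Lb f(t+r)=0$, while $e_A$ annihilates $f(t\pm r)$, I obtain $\nabla_L X=2f'(t+r)L$, $\nabla_\Lb X=2f'(t-r)\Lb$, and $\nabla_{e_A}X=\tfrac{f(t+r)-f(t-r)}{r}e_A$. Contracting, the only nonvanishing components are $\pi(L,\Lb)=-4\big(f'(t+r)+f'(t-r)\big)$ and $\pi(e_A,e_B)=\tfrac{2(f(t+r)-f(t-r))}{r}\,\delta_{AB}$, all others vanishing.

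Next I contract with $T$. Writing $\tfrac12\pi_{\alpha\beta}T^{\alpha\beta}=\tfrac12\pi(\nabla\phi,\nabla\phi)-\tfrac14(\operatorname{tr}_m\pi)\,m^{\mu\nu}\phi_\mu\phi_\nu$, I expand the gradient in the frame as $\nabla\phi=-\tfrac12(\Lb\phi)L-\tfrac12(L\phi)\Lb+(e_A\phi)e_A$ and insert the components above; the $L,\Lb$ part contributes $-\big(f'(t+r)+f'(t-r)\big)(L\phi)(\Lb\phi)$ to $\tfrac12\pi(\nabla\phi,\nabla\phi)$ and the angular part $\tfrac{f(t+r)-f(t-r)}{r}|\nabb\phi|^2$, while the trace is $\operatorname{tr}_m\pi=4\big(f'(t+r)+f'(t-r)\big)+\tfrac{4(f(t+r)-f(t-r))}{r}$.

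Finally, to eliminate the spurious $(L\phi)(\Lb\phi)$ term I use the frame decomposition $m^{\mu\nu}\phi_\mu\phi_\nu=-(L\phi)(\Lb\phi)+|\nabb\phi|^2$, equivalently $(L\phi)(\Lb\phi)=|\nabb\phi|^2-m^{\mu\nu}\phi_\mu\phi_\nu$. After substituting this and combining with the trace term, the contributions proportional to $\big(f'(t+r)+f'(t-r)\big)\,m^{\mu\nu}\phi_\mu\phi_\nu$ cancel, and one is left with exactly the claimed identity, with $|\nabb\phi|^2$ carrying the coefficient $\tfrac{f(t+r)-f(t-r)}{r}-f'(t+r)-f'(t-r)$ and $m^{\mu\nu}\phi_\mu\phi_\nu$ carrying $-\tfrac{f(t+r)-f(t-r)}{r}$.

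I expect the main obstacle to be the angular component $\pi(e_A,e_B)$: this is the only place the nontrivial geometry enters, through the expansion $\nabla_{e_A}L=r^{-1}e_A$ of the outgoing cones, and it is precisely this term that produces the crucial factor $\tfrac{f(t+r)-f(t-r)}{r}$ that will drive the Morawetz estimate. The remaining work is purely algebraic: keeping careful track of the factors of $-\tfrac12$ that arise from $m(L,\Lb)=-2$ when raising indices, and recognising the identity $(L\phi)(\Lb\phi)=|\nabb\phi|^2-m^{\mu\nu}\phi_\mu\phi_\nu$, which collapses the $L\Lb$ contribution together with the trace term into the stated coefficient of $|\nabb\phi|^2$.
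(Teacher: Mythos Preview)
Your proof is correct and follows essentially the same route as the paper: both compute the deformation tensor of $X$ in the null frame and contract with $T^{\alpha\beta}$, the key geometric input being $\nabla_{e_A}L=r^{-1}e_A$ (equivalently, the paper's $\pa_\alpha L_\beta=\slashm_{\alpha\beta}/r$). The only organizational difference is that the paper treats $L_+$ and $L_-$ separately and packages $\tfrac12{}^{(L_\pm)}\pi_{\alpha\beta}$ as a combination of $\slashm_{\alpha\beta}$ and $m_{\alpha\beta}$, then uses $(\slashm_{\alpha\beta}-m_{\alpha\beta})T^{\alpha\beta}=|\nabb\phi|^2$, whereas you compute $\pi$ for $X$ directly via connection coefficients and split the contraction as $\tfrac12\pi(\nabla\phi,\nabla\phi)-\tfrac14(\operatorname{tr}_m\pi)\,m^{\mu\nu}\phi_\mu\phi_\nu$ before invoking $(L\phi)(\Lb\phi)=|\nabb\phi|^2-m^{\mu\nu}\phi_\mu\phi_\nu$; these are equivalent bookkeepings of the same calculation.
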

\begin{proof} With $L=\pa_t+\pa_r$ and $\underline{L}=\pa_t-\pa_r$  we compute
  \begin{align*}
    {}^{(L_+)}\pi_{\alpha\beta}&= f(t+ r)\big(\pa_\alpha L_\beta+\pa_\beta L_\alpha\big)-f^{\,\prime\!}(t+ r)(\underline{L}_\alpha L_\beta+\underline{L}_\beta L_\alpha)\\
    {}^{(L_-)}\pi_{\alpha\beta}&= f(t- r)\big(\pa_\alpha \underline{L}_\beta+\pa_\beta \underline{L}_\alpha\big)-f^{\,\prime\!}(t- r)(\underline{L}_\alpha L_\beta+\underline{L}_\beta L_\alpha)
  \end{align*}
  where we used that $\underline{L}_\alpha=-L^\alpha$. Moreover, recall that $m_{\alpha\beta}=-\big(\underline{L}_\alpha
L_\beta+L_\alpha \underline{L}_\beta\big)/2+\slashm_{\alpha\beta}$,
where
$\slashm^{\alpha\beta}\phi_\alpha\phi_\beta=(\delta^{ij}-\omega^i\omega^j)\phi_i\phi_j
=|\nabb \phi|^2$ is the angular gradient. Furthermore $\pa_0 L_\beta=\pa_\beta L_0=0$ and $\pa_i L_j=\pa_i\omega_j=(\delta_{ij}-\omega_i\omega_j)/r=\slashm^{ij}/r$. Hence $\pa_\alpha L_\beta=\slashm_{\alpha\beta}$ and
$$
\frac{{}^{(L_\pm)}\pi_{\alpha\beta}}{2}=\pm \frac{f(t\pm
r)}{r}\,\slashm_{\alpha\beta}-f^{\,\prime\!}(t\pm
r)(\slashm_{\alpha\beta}-m_{\alpha\beta}).
$$
Here
$$
(\slashm_{\alpha\beta}-m_{\alpha\beta})
T^{\alpha\beta}=\frac{(\underline{L}_\alpha
L_\beta+\underline{L}_\beta L_\alpha)}{2}\, T^{\alpha\beta}=|\nabb
\phi|^2.
$$
It follows that
$$
\frac{{}^{(L_\pm)}\pi_{\alpha\beta}}{2}\,\,T^{\alpha\beta} =
\Big(\pm \frac{f(t\pm r)}{r}-f^{\,\prime\!}(t\pm r)\Big)|\nabb \phi|^2
\mp \frac{f(t\pm r)}{r}m^{\mu\nu}\phi_\mu \phi_\nu
$$
which proves the lemma.
\end{proof}

We can rewrite the identity of the previous Lemma as
 \begin{equation}\label{def:Omega}
\frac{{}^{(X)}\pi_{\alpha\beta}\!\!}{2}\, \,T^{\alpha\beta}\!=(-r\pa_r\Omega)|\nabb \phi|^2-\,\Omega\, m^{\mu\nu}\!\phi_\mu \phi_\nu,\quad\text{where}\quad \Omega(t,r)=\frac{f(t\!+r)-f(t\!-\!r)}{r}
\end{equation}
and thus
\begin{equation}
\pa_\alpha P^{\,\alpha}
=(\square\phi) X^\alpha\phi_\alpha -r(\pa_r \Omega)|\nabb\phi|^2- \Omega\, m^{\mu\nu}\phi_\mu \phi_\nu .
\end{equation}

Moreover, observe that $\square \,\Omega=0$, and hence
\begin{equation}\label{def:P:tilde}
\pa_\alpha\widetilde{P}^{\,\alpha}=\big(\Omega \phi+
X^\alpha\phi_\alpha\big)\,\square \phi +(-r\pa_r \Omega)|\nabb
\phi|^2,\quad\text{where}\quad
\widetilde{P}_\alpha=P_\alpha+\Omega\phi\, \phi_\alpha
-\frac{1}{2}\phi^2 \Omega_\alpha .
\end{equation}
Also note that $X r=f(t+r)-f(t-r)$, hence $(\Omega+X^\alpha\pa_\alpha)\phi= r^{-1}X^\alpha\pa_\alpha (r\phi)$.

Let us summarize the divergence properties of the \emph{modified current} $\widetilde{P}_\alpha$ as follows:
\begin{lemma}\label{lemma:div:P}
  Let the current $\widetilde{P}_\alpha$ be defined by \eqref{def:P:tilde}, where $X$ is chosen as in \eqref{def:X}, and $\Omega$ defined by \eqref{def:Omega}, then we have
  \begin{equation*}
    \partial^\alpha\widetilde{P}_\alpha= r^{-1}X(r\phi) \Box\phi-r\pa_r \Omega|\nabb \phi|^2.
  \end{equation*}
\end{lemma}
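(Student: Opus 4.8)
The plan is to verify the identity by a direct computation, taking as the starting point the divergence of the unmodified current $P_\alpha=T_{\alpha\beta}X^\beta$ already recorded above,
\[
\partial^\alpha P_\alpha=(\Box\phi)\,X^\alpha\phi_\alpha-r(\pa_r\Omega)|\nabb\phi|^2-\Omega\, m^{\mu\nu}\phi_\mu\phi_\nu,
\]
(using that the divergence $\partial^\alpha P_\alpha=\partial_\alpha P^\alpha$ is the same scalar), and then computing the divergence of the two correction terms in $\widetilde{P}_\alpha=P_\alpha+\Omega\phi\,\phi_\alpha-\tfrac12\phi^2\Omega_\alpha$. The whole point of these corrections is that they are engineered to remove the indefinite term $\Omega\, m^{\mu\nu}\phi_\mu\phi_\nu$, while contributing only a multiple of $\Box\phi$ together with lower order terms that cancel among themselves.

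Concretely, I would apply the Leibniz rule to each correction. For the first, using $\partial^\alpha\phi_\alpha=\Box\phi$ and $\phi^\alpha\phi_\alpha=m^{\mu\nu}\phi_\mu\phi_\nu$,
\[
\partial^\alpha\big(\Omega\phi\,\phi_\alpha\big)=(\partial^\alpha\Omega)\,\phi\,\phi_\alpha+\Omega\, m^{\mu\nu}\phi_\mu\phi_\nu+\Omega\phi\,\Box\phi,
\]
and for the second, with $\Omega_\alpha=\partial_\alpha\Omega$,
\[
-\tfrac12\partial^\alpha\big(\phi^2\Omega_\alpha\big)=-\phi\,\phi^\alpha\Omega_\alpha-\tfrac12\phi^2\,\Box\Omega.
\]
At this stage two cancellations occur. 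First, the cross terms $(\partial^\alpha\Omega)\phi\phi_\alpha$ and $-\phi\phi^\alpha\Omega_\alpha$ are identical after raising and lowering the contracted index through the symmetric $m^{\alpha\beta}$, so they cancel. Second, invoking $\Box\Omega=0$ eliminates the term $-\tfrac12\phi^2\Box\Omega$. Adding the two corrections to $\partial^\alpha P_\alpha$, the $+\Omega\, m^{\mu\nu}\phi_\mu\phi_\nu$ produced by the first correction cancels the $-\Omega\, m^{\mu\nu}\phi_\mu\phi_\nu$ already present in $\partial^\alpha P_\alpha$.

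What remains is
\[
\partial^\alpha\widetilde{P}_\alpha=(\Box\phi)\big(X^\alpha\phi_\alpha+\Omega\phi\big)-r(\pa_r\Omega)|\nabb\phi|^2,
\]
and I would conclude by inserting the identity $X^\alpha\phi_\alpha+\Omega\phi=r^{-1}X(r\phi)$ recorded just before the statement, which yields exactly $\partial^\alpha\widetilde{P}_\alpha=r^{-1}X(r\phi)\,\Box\phi-r\pa_r\Omega|\nabb\phi|^2$. The only input meriting care is the fact $\Box\Omega=0$: since $\Omega=(f(t+r)-f(t-r))/r$, this follows from $\Box(v/r)=r^{-1}(-\pa_t^2+\pa_r^2)v$ for radial $v$, together with the observation that both $f(t+r)$ and $f(t-r)$ are annihilated by $-\pa_t^2+\pa_r^2$; I would note this holds for $r>0$ and that $\Omega$ extends smoothly across $r=0$ because $f(t+r)-f(t-r)$ vanishes there to first order. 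Otherwise the argument is a mechanical, sign-tracking verification with no genuine obstacle, the substance being entirely contained in the design of the correction terms and in the earlier identities.
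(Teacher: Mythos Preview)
Your proposal is correct and follows essentially the same route as the paper: the lemma is stated there as a summary of the computations immediately preceding it, which start from the identity $\pa_\alpha P^{\,\alpha}=(\square\phi) X^\alpha\phi_\alpha -r(\pa_r \Omega)|\nabb\phi|^2- \Omega\, m^{\mu\nu}\phi_\mu \phi_\nu$, invoke $\Box\Omega=0$, and then use $(\Omega+X^\alpha\pa_\alpha)\phi= r^{-1}X(r\phi)$. You have simply made explicit the Leibniz-rule step for the two correction terms that the paper leaves tacit, and your added remark on why $\Box\Omega=0$ and its smooth extension across $r=0$ is a welcome detail.
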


Let us now turn to the flux terms
\begin{gather}
  \widetilde{P}_\alpha \partial_t^\alpha=T(X,\partial_t)+\Omega\phi\partial_t\phi-\frac{1}{2}\phi^2\partial_t\Omega,\\
  \widetilde{P}_\alpha L^\alpha= T(X,L)+\Omega\phi L\phi-\frac{1}{2}\phi^2 L\Omega .
\end{gather}

\begin{lemma}\label{lemma:P:flux}
  Let $C_R^+$ the outgoing null hypersurface from $\partial \Sigma_{t_1}^R$, truncated at $t=t_2$, then
  for any differentiable function $f$, we have with $X$, $\Omega$ and $\tilde{P}$ as defined above:
\begin{multline*}
  \frac{1}{2}\int_{\Sigma_{t_1}^R}  \Bigl[f(t+r)\bigl(L(r\phi)\bigr)^2+f(t-r)\bigl(\Lb(r\phi)\bigr)^2  +\bigl(f(t+r)+f(t-r)\bigr)r^2|\nabb\phi|^2\Bigr]\frac{dx}{r^2}\\
  +\int_{C_R^+}  \Bigl[ f(t+r)(L(r\phi))^2+f(t-r)r^2|\nabb\phi|^2\Bigr] \frac{r^2\ud\omega\ud v}{r^2}= \\= \int_{\Sigma_{t_1}^R}\widetilde{P}\cdot \partial_t  dx+\int_{C_R^+}{}^\ast\widetilde{P}
  +\frac{1}{2}\int_{\mathbb{S}^2}r\bigl(f(t-r)+f(t+r)\bigr)\phi^2\ud\omega
  \rvert_{t=t_2,r=R+(t_2-t_1)}.
\end{multline*}
 \end{lemma}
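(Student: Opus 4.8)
The plan is to evaluate both flux densities entirely in the null frame $(L,\Lb,e_A)$ and then to trade $\phi$-derivatives for $(r\phi)$-derivatives. First I would record the stress-tensor components $T(L,L)=(L\phi)^2$, $T(\Lb,\Lb)=(\Lb\phi)^2$, $T(L,\Lb)=|\nabb\phi|^2$ (exactly as already used in the proof of Proposition~\ref{prop:w}), together with the decompositions $X=f(t+r)L+f(t-r)\Lb$ and $\pa_t=\tfrac12(L+\Lb)$. Contracting then gives the two bulk stress fluxes
\[
  T(X,\pa_t)=\tfrac12\bigl[f(t+r)(L\phi)^2+f(t-r)(\Lb\phi)^2+\bigl(f(t+r)+f(t-r)\bigr)|\nabb\phi|^2\bigr],
\]
\[
  T(X,L)=f(t+r)(L\phi)^2+f(t-r)|\nabb\phi|^2,
\]
which are precisely the quantities entering the flux identities for $\widetilde{P}\cdot\pa_t$ and $\widetilde{P}\cdot L$ recorded just above the Lemma.

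Next I would use $Lr=1$ and $\Lb r=-1$, hence $L(r\phi)=\phi+rL\phi$ and $\Lb(r\phi)=-\phi+r\Lb\phi$, to expand the ``good'' integrands on the left. Each reduces to the corresponding stress flux $T(X,\pa_t)$, resp.\ $T(X,L)$, plus cross terms of the type $\tfrac1r\phi\bigl(f(t+r)L\phi\mp f(t-r)\Lb\phi\bigr)$ and terms quadratic in $\phi$. The whole point of the modified current \eqref{def:P:tilde} is that these cross terms are matched by the lower-order pieces $\Omega\phi\,\pa_t\phi-\tfrac12\phi^2\pa_t\Omega$ (resp.\ $\Omega\phi\,L\phi-\tfrac12\phi^2 L\Omega$), with $\Omega=(f(t+r)-f(t-r))/r$ as in \eqref{def:Omega}, \emph{up to a perfect derivative}. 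Concretely I would verify the pointwise identity on $\Sigma_{t_1}^R$,
\[
  \bigl(\text{good }\Sigma\text{-density}\bigr)-\widetilde{P}\cdot\pa_t
  =\frac{1}{r^2}\,\pa_r\Bigl[\tfrac12\, r\bigl(f(t+r)+f(t-r)\bigr)\phi^2\Bigr],
\]
together with its counterpart on $C_R^+$, where along the generator $L=2\,\pa_v$ at fixed $u=t-r$ the difference of the good density and the density of ${}^\ast\widetilde{P}$ is a perfect $\pa_v$-derivative of a constant multiple of $r\bigl(f(t+r)+f(t-r)\bigr)\phi^2$. Here the explicit computation of $\pa_t\Omega$ and $L\Omega$ is routine, and the identity $\square\Omega=0$ (already used to pass from $P$ to $\widetilde{P}$ in Lemma~\ref{lemma:div:P}) is exactly what guarantees that no residual bulk remainder survives.

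Finally I would integrate the two perfect derivatives. The radial derivative over $\Sigma_{t_1}^R$ contributes only the outer sphere $\{t=t_1,\,r=R\}$, the inner contribution at $r=0$ vanishing; the generator derivative over $C_R^+$ contributes its two end spheres $\{t=t_1,\,r=R\}$ and $\{t=t_2,\,r=R+(t_2-t_1)\}$. Adding the two, the contributions at the common corner sphere $\partial\Sigma_{t_1}^R$ are arranged to cancel, leaving precisely the single corner term $\tfrac12\int_{\mathbb{S}^2}r\bigl(f(t-r)+f(t+r)\bigr)\phi^2\,\ud\omega$ at $t=t_2$, $r=R+(t_2-t_1)$ asserted in the Lemma.

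The main obstacle is the bookkeeping of the null flux: one must pin down the normalization of the induced measure on $C_R^+$ and of the dual form ${}^\ast\widetilde{P}$ relative to the spacelike measure $\ud x$ on $\Sigma_{t_1}^R$, so that the numerical coefficients of the two corner contributions combine into exactly the stated weight and the shared corner drops out. This is the only place where the geometry enters nontrivially, through the null normalization $m(L,\Lb)=-2$ and the Jacobian of the $(v,\omega)$ parametrization of the cone; getting these factors consistent between the two boundary pieces is the delicate part, whereas the algebra in the first three steps is mechanical once the frame identities above are in hand.
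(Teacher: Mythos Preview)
Your proposal is correct and follows essentially the same route as the paper: compute $T(X,\partial_t)$ and $T(X,L)$ in the null frame, expand the $(r\phi)$-integrands, match the cross terms against the lower-order pieces $\Omega\phi\,\partial\phi-\tfrac12\phi^2\partial\Omega$ of $\widetilde{P}$, and integrate the resulting total derivatives so that the corner contributions at $\partial\Sigma_{t_1}^R$ cancel. The only cosmetic difference is that you package the $\Sigma$-computation as a pointwise identity $\bigl(\text{good density}\bigr)-\widetilde{P}\cdot\partial_t=r^{-2}\partial_r\bigl[\tfrac12 r(f(t+r)+f(t-r))\phi^2\bigr]$ before integrating, whereas the paper writes $\Omega\partial_t\phi$ out and integrates by parts in $r$ directly; one small caveat is that your remark that ``$\square\Omega=0$ is exactly what guarantees no residual bulk remainder survives'' is misplaced here, since this flux computation is purely algebraic on the boundary pieces and $\square\Omega=0$ enters only in the separate divergence identity of Lemma~\ref{lemma:div:P}.
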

 \begin{proof}

First $\pa_t=(L+\underline{L})/2$, and with  $X=f(t+r) L+f(t-r)\underline{L}$ we have
 \begin{align*}
 T(X,L)&=f(t+r) (L\phi)^2+f(t-r) |\nabb\phi|^2\\
   T(X,\underline{L})&=f(t+r) |\nabb\phi|^2+f(t-r) (\Lb\phi)^2\\
   T(X,\pa_t)&=\frac{1}{2}f(t+r) (L\phi)^2+\frac{1}{2}\big(f(t+r)+f(t-r)\big) |\nabb\phi|^2+\frac{1}{2}f(t-r) (\Lb\phi)^2.
 \end{align*}
  Secondly note  that
\beq
  \Omega\partial_t\phi = \frac{f(t+r)-f(t-r)}{r}\partial_t\phi=\frac{f(t+r)}{r}L\phi
  -\frac{f(t-r)}{r}\Lb\phi-\frac{f(t+r)+f(t-r)}{r}\partial_r\phi .
\eq
Now on one hand
\begin{multline*}
  -  \int_{\Sigma_t^R}\frac{f(t+r)+f(t-r)}{r}\phi\, \partial_r\phi\, \ud x=-\frac{1}{2}\int_{\Sigma_t^R} r \bigl(f(t+r)+f(t-r)\bigr) \partial_r \phi^2 \ud \omega\ud r\\
  =\!\frac{1}{2}\!\int_{\Sigma_t^R}\Bigl[\frac{1}{r}\bigl(f^{\,\prime\!}(t+r)-f^{\,\prime\!}(t-r)\bigr)
  +\frac{f(t\!+\!r)\!+\!f(t\!-\!r)}{r^2}\Bigr]\phi^2\ud x
  -\frac{1}{2}\!\int_{\Sigma_t^R}\!\!\!\!\partial_r\big( r \bigl(f(t+r)+f(t-r)\bigr) \phi^2 \big)\ud \omega\ud r
\end{multline*}
and on the other hand
\begin{equation*}
  \partial_t\Omega=\frac{f^{\,\prime}(t+r)-f^{\,\prime}(t-r)}{r} .
\end{equation*}

Therefore
\begin{multline*}
  \int_{\Sigma_{t_1}^R}\widetilde{P}_\alpha\partial_t^\alpha
  +  \frac{1}{2}\!\int_{\Sigma_t^R}\!\!\partial_r \Big(\! r \bigl(f(t+r)+f(t-r)\bigr) \phi^2 \!\Big)\ud \omega\ud r\\
=\frac{1}{2}\int_{\Sigma_t^R}\frac{1}{r^2}\Bigl[f(t+r)(rL\phi)^2+f(t-r)(r\Lb\phi)^2+\bigl(f(t+r)+f(t-r)\bigr)r^2|\nabb\phi|^2\\
  +2f(t+r)r\phi L\phi-2f(t-r)r\phi\Lb\phi+\bigl(f(t+r)+f(t-r)\bigr)\phi^2\Bigr]\ud x\\
  =\frac{1}{2}\int_{\Sigma_t^R}\frac{1}{r^2}\Bigl[f(t+r)\bigl(rL\phi+\phi\bigr)^2+f(t-r)\bigl(r\Lb\phi-\phi\bigr)^2+\bigl(f(t+r)+f(t-r)\bigr)r^2|\nabb\phi|^2\Bigr]
\end{multline*}
which proves the formula:
   \begin{multline}\label{eq:P:tilde:flux:Sigma}
 \frac{1}{2}\int_{\Sigma_t^R}
  \Bigl[f(t+r)\bigl(L(r\phi)\bigr)^2+f(t-r)\bigl(\Lb(r\phi)\bigr)^2
  +\bigl(f(t+r)+f(t-r)\bigr)r^2|\nabb\phi|^2\Bigr]\frac{dx}{r^2}\\
  = \int_{\Sigma_t^R}\widetilde{P}\cdot \partial_t  dx
  +\frac{1}{2}\!\int_{\partial\Sigma_t^R}\!\! r \bigl(f(t+r)+f(t-r)\bigr) \phi^2 \ud \omega .
\end{multline}

Regarding the null flux, we proceed similarly by writing
\begin{equation*}
  \Omega L\phi=\frac{2f(t+r)}{r} L\phi-\frac{f(t+r)+f(t-r)}{r} L\phi
\end{equation*}
and integrating by parts, on the null hypersurface $C_R$ truncated by $t=t_2$, and $t=t_1$,
\begin{multline*}
  -\frac{1}{2}\int_{C_R} \frac{f(t+r)+f(t-r)}{r}L(\phi^2)=-\frac{1}{2}\int_{v_1}^{v_2}\frac{f(2v)+f(2u)}{r}\partial_v(\phi^2)\, r^2\ud\omega\ud v\\
  =-\frac{1}{2}\int_{\mathbb{S}^2}r\bigl(f(t-r)+f(t+r)\bigr)\phi^2\ud\omega\Bigr\rvert^{v_2}_{v_1}+\frac{1}{2}\int_{C_R}\Bigl[\frac{f(t+r)+f(t-r)}{r^2}+\frac{2f'(t+r)}{r}\Bigr]\phi^2
\end{multline*}
where we denoted by $2u=t-r$, $2v=t+r$, hence $L=\partial_v$, and $r=v-u$.
Since
\begin{equation*}
  L\Omega=\frac{2f'(t+r)}{r}-\frac{f(t+r)-f(t-r)}{r^2}
\end{equation*}
we obtain
\begin{multline*}
 \!\!\! \int_{C_R}\!\!\!\!\!{}^\ast\widetilde{P}=\!\int_{v_1}^{v_2}\!\!\!\widetilde{P}\cdot L \,r^2\ud\omega\ud v=\!\int_{v_1}^{v_2}\!\!\!\Big( f(t+r)(L\phi)^2+f(t-r)|\nabb\phi|^2
  +\frac{2f(t\!+\!r)\!\!}{r}\phi L\phi\,+\frac{f(t\!+\!r)\!\!}{r^2}\phi^2\Big)r^2 \ud\omega\ud v\\
  -\frac{1}{2}\int_{\mathbb{S}^2}r\bigl(f(t-r)+f(t+r)\bigr)\phi^2\ud\omega\Bigr\rvert^{v_2}_{v_1}
\end{multline*}
which proves the identity:
\beq\label{eq:P:tilde:flux:C}
  \int_{v_1}^{v_2}\Bigl[ f(t\!+r)(L(r\phi))^2+f(t-r)r^2|\nabb\phi|^2\Bigr] \ud\omega\ud v
  =\int_{C_R}\!\!\!\!{}^\ast\widetilde{P}
  +\frac{1}{2}\int_{\mathbb{S}^2}r\bigl(f(t-r)+f(t+r)\bigr)\phi^2\ud\omega\Bigr\rvert^{v_2}_{v_1} .
\eq

Adding the formulas \eqref{eq:P:tilde:flux:Sigma} and \eqref{eq:P:tilde:flux:C} we see that the boundary terms at $\partial \Sigma_{t_1}^R$ exactly cancel, and we obtain the statement of the Lemma.
\end{proof}

We can now prove the identity \eqref{eq:frac:morawetz:id}.

\begin{proof}[Proof of Proposition~\ref{prop:frac:morawetz:id}]
  By Stokes theorem applied to the 1-form $\widetilde{P}$ on the domain $\mathcal{D}$ we have
  \begin{equation*}
    \int_{\Sigma_{t_2}^R}\widetilde{P}\cdot \partial_t \ud x + \int_{t_1}^{t_2}\int_{\Sigma_t^{R-(t_2-t)}} \partial^\alpha\widetilde{P}_\alpha \ud x\ud t=
    \int_{\Sigma_{t_1}^{R-(t_2-t_1)}}\widetilde{P}\cdot \partial_t \ud x+\int_{C_R}{}^\ast\widetilde{P}.
  \end{equation*}
  A formula for the divergence of $\widetilde{P}$ is given in Lemma~\ref{lemma:div:P}.
  Moreover the right hand side of the above identity is computed in Lemma.~\ref{lemma:P:flux}; see also \eqref{eq:P:tilde:flux:Sigma} for the flux term on the left hand side, and note thus that the boundary terms at $\partial\Sigma_{t_2}^R$ cancel.

   If we set $f(v)=\langle v \rangle^{2s}$, $s\geq 1$ as in \eqref{eq:f:frac} with $a=2s$ the statement of the proposition follows.
\end{proof}

To prove Proposition~\ref{prop:frac:morawetz} we exploit that with our choice of $f$ the ``bulk'' has the correct sign for the backward problem.

\begin{lemma} \label{lemma:f:bulk} Suppose that $f$ is a function such that
\begin{equation}\label{eq:convexcond}
f^{\,\prime\prime\prime\!}(v)+f^{\,\prime\prime\prime\!}(-v)\geq 0, \qquad
\text{and}\quad f^{\,\prime\prime\prime\!}(v)\geq 0, \quad \text{for}
\quad v\geq 0.
\end{equation}
Then for $t\geq 0$ and $r\geq 0$
\begin{equation}\label{eq:deformfactor}
\frac{1}{r}\big( f(t+r)-f(t-r)\big) - \big(
f^{\,\prime\!}(t+r)+f^{\,\prime\!}(t-r)\big)\leq 0 .
\end{equation}
In particular, this condition \eqref{eq:convexcond} is true for
$f(v)=(1+v^2)^{a/2}$, if $a\geq 2$.
\end{lemma}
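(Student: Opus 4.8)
The plan is to reduce the pointwise inequality \eqref{eq:deformfactor} to a one-variable monotonicity statement in $r$, treating $t\geq 0$ as a fixed parameter. Multiplying \eqref{eq:deformfactor} by $r\geq 0$, the claim is equivalent to
\[
H(r):=f(t+r)-f(t-r)-r\bigl(f'(t+r)+f'(t-r)\bigr)\leq 0,\qquad r\geq 0,
\]
since the left-hand side of \eqref{eq:deformfactor} is exactly $H(r)/r$ for $r>0$. First I would record the two facts that drive the argument: $H(0)=0$, and a direct differentiation in which the first-derivative terms cancel gives the clean expression
\[
H'(r)=-r\bigl(f''(t+r)-f''(t-r)\bigr).
\]
It therefore suffices to prove $f''(t+r)-f''(t-r)\geq 0$ for all $t,r\geq 0$, for then $H'\leq 0$ on $[0,\infty)$ and hence $H(r)\leq H(0)=0$.

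The heart of the matter is the sign of $f''(t+r)-f''(t-r)=\int_{t-r}^{t+r}f'''(x)\,dx$, and this is where the hypotheses \eqref{eq:convexcond} enter. If $r\leq t$ the whole interval $[t-r,t+r]$ lies in $[0,\infty)$ and the integrand is nonnegative by the second condition in \eqref{eq:convexcond}. If $r>t$, I would split the interval at the point $r-t\geq 0$ into the symmetric piece $[t-r,r-t]$ and the remaining piece $[r-t,t+r]$. On the symmetric piece the substitution $x\mapsto -x$ yields $\int_{t-r}^{r-t}f'''=\int_0^{r-t}\bigl(f'''(x)+f'''(-x)\bigr)\,dx\geq 0$ by the first (symmetrized) condition in \eqref{eq:convexcond}; on $[r-t,t+r]$ the integrand is again nonnegative since $x\geq 0$ there. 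In both cases the integral is nonnegative, which closes the main inequality.

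Finally, for the specific weight $f(v)=(1+v^2)^{a/2}$ I would simply differentiate three times. Since $f$ is even, $f'''$ is odd, so the symmetrized condition $f'''(v)+f'''(-v)\geq 0$ holds with equality and is automatic. A routine computation gives
\[
f'''(v)=a(a-2)\,v\,(1+v^2)^{a/2-3}\bigl(3+(a-1)v^2\bigr),
\]
and for $a\geq 2$ every factor other than $v$ is nonnegative, so $f'''(v)\geq 0$ for $v\geq 0$, verifying the second condition.

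I expect the only subtle point to be the case $r>t$ in the sign analysis of $f''(t+r)-f''(t-r)$, where the strong sign of $f'''$ on the positive axis is unavailable across the full interval of integration. The symmetrization against the weaker hypothesis $f'''(v)+f'''(-v)\geq 0$ is precisely what rescues the estimate there, and it explains why \emph{both} conditions in \eqref{eq:convexcond} are imposed.
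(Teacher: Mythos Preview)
Your argument is correct and in fact slightly cleaner than the paper's. Both proofs reduce the question to an integral involving $f'''$ and split into the cases $r\leq t$ and $r>t$, using $f'''\geq 0$ on $[0,\infty)$ in the former and the symmetrized hypothesis on the piece $[t-r,r-t]$ in the latter. The difference is in how one arrives at that integral: the paper integrates by parts twice to obtain
\[
\frac{1}{r}\bigl(f(t+r)-f(t-r)\bigr)-\bigl(f'(t+r)+f'(t-r)\bigr)=\int_{t-r}^{t+r}\frac{(v-t)^2-r^2}{2r}\,f'''(v)\,dv,
\]
and must then carry the nonconstant weight $(v-t)^2-r^2$ through the symmetrization, which forces a coupled use of both hypotheses in \eqref{eq:convexcond} on the symmetric interval. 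Your monotonicity-in-$r$ approach, via $H'(r)=-r\bigl(f''(t+r)-f''(t-r)\bigr)=-r\int_{t-r}^{t+r}f'''$, lands on an \emph{unweighted} integral of $f'''$, so the two hypotheses enter separately and transparently on the two pieces of the split. The computation of $f'''$ for $f(v)=(1+v^2)^{a/2}$ matches the paper's (up to the harmless overall factor $a$ coming from the normalization $f(v)=(1+v^2)^{a/2}/a$ used there).
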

\begin{proof} We have
\begin{equation}
f(t+r)-f(t-r)=\int_{t-r}^{t+r} f^{\,\prime\!}(v)\, dv=
(v-t)f^{\,\prime\!}(v)\Big|_{t-r}^{t+r}- \int_{t-r}^{t+r}
(v-t)f^{\,\prime\prime\!}(v)\, dv
\end{equation}
and it follows that
\begin{equation}\label{eq:deformfactor2}
\frac{1}{r}\big( f(t+r)-f(t-r)\big) - \big(
f^{\,\prime\!}(t+r)+f^{\,\prime\!}(t-r)\big)= - \frac{1}{r}\int_{t-r}^{t+r}
(v-t)f^{\,\prime\prime\!}(v)\, dv.
\end{equation}
Integrating by parts once more we see that
\begin{equation}\label{eq:deformfactor3}
\frac{1}{r}\big( f(t+r)-f(t-r)\big) - \big(
f^{\,\prime\!}(t+r)+f^{\,\prime\!}(t-r)\big)= \int_{t-r}^{t+r}
\frac{(v-t)^2-r^2}{2r} f^{\,\prime\prime\prime\!}(v)\, dv .
\end{equation}
Since $(v-t)^2-r^2\leq 0$, when $t-r\le v\leq t+r$,
\eqref{eq:deformfactor} follows directly if $t\geq  r$. When
$t\leq r$ we divide the domain of integration up into two parts.
In the region $[r-t,t+r]$ we use that
$f^{\,\prime\prime\prime\!}(v)\!\geq \!0$ as before. In the region
$[t\!-\!r,r\!-\!t]$ we will use both inequalities in \eqref{eq:convexcond}. We have
\begin{multline}
\int_{t-r}^{r-t} \!\!\frac{(v\!-t)^2\!-r^2\!\!\!}{2r}
f^{\,\prime\prime\prime\!}(v)\, dv=\int_{0}^{r-t}
\!\!\frac{(v\!-t)^2\!-r^2\!\!\!}{2r} f^{\,\prime\prime\prime\!}(v) +
\frac{(-v\!-t)^2\!-r^2\!\!\!}{2r} f^{\,\prime\prime\prime\!}(-v)\, dv\\
=\int_{0}^{r-t} \frac{(-v-t)^2\!-r^2\!\!\!}{2r}\big(
f^{\,\prime\prime\prime\!}(v) + f^{\,\prime\prime\prime\!}(-v)\big)
-\frac{2tv}{r}
 f^{\,\prime\prime\prime\!}(v) \, dv \leq 0.
\end{multline}
 If $f(v)\!=\!(1\!+v^2)^{a/2}\!/a$ then
$f^{\,\prime\!}(v)\!=\!v(1\!+v^2)^{a/2-1}\!\!$,
 $f^{\,\prime\prime\!}(v)\!=\!(1\!+v^2)^{a/2-1} \!+(1\!+v^2)^{a/2-2} v^2(a-2) $
 and $f^{\,\prime\prime\prime\!}(v)=(a-2)3v(1+v^2)^{a/2-2}\! +
 (1+v^2)^{a/2-3}(a-2)(a-4) v^3$. Taking out a common factor  we get $f^{\,\prime\prime\prime\!}(v)=(a-2)v(1+v^2)^{a/2-3}\big(
 3(1+v^2)+(a-4)v^2\big)$. Hence
 $$
 f^{\,\prime\prime\prime\!}(v)=(a-2)v(1+v^2)^{a/2-3}\big((a-1)v^2+3\big)\geq
 0,\qquad \text{if}\quad v\geq 0\quad\text{and}\quad a\geq 2.
 $$
 If $f(-v)\!=\!f(v)$ then
 $f^{\,\prime\prime\prime\!}(-v)\!=\!-f^{\,\prime\prime\prime\!}(v)$
 so \eqref{eq:convexcond} hold for $f(v)\!=\!(1\!+v^2)^{a/2}\!/a$, if $a\!\geq \!2$.\!\!\!\!
\end{proof}

\begin{proof}[Proof of Proposition~\ref{prop:frac:morawetz}.]
  Recall that we have chosen $f(v)=\langle v \rangle^{2s}$, $s\geq 1$ to obtain \eqref{eq:frac:morawetz:id}.
  Therefore by Lemma~\ref{lemma:f:bulk} $(-r\partial_r\Omega)\leq 0$, and thus
\begin{equation*}
E^s_{R-(t_2-t_1)}(t_1)+F_R^s(t_1,t_2)\leq E^s_R(t_2)+\int_{t_1}^{t_2}\int_{ \Sigma_t^{R-(t_2-t)}} r^{-1}
X (r\phi)\square \phi  dx dt .
\end{equation*}
Moreover,
\begin{multline*}
  \int_{\Sigma_t}\frac{1}{r}\bigl(X(r\phi)\bigr)\Box\phi\ud x\leq \biggl(\int_{\Sigma_t}\langle t+r\rangle^{2s} \bigl(L(r\phi)\bigr)^2\frac{\ud x}{r^2}\biggr)^{{1}/{2}}\lVert \langle t+r\rangle^s \Box \phi\rVert_{\mathrm{L}^2(\Sigma_t)}\\
  +\biggl(\int_{\Sigma_t}\langle t-r\rangle^{2s}\bigl(\Lb\,(r\phi)\bigr)^2\frac{\ud x}{r^2}\biggr)^{{1}/{2}}\lVert \langle t-r\rangle \Box \phi\rVert_{\mathrm{L}^2(\Sigma_t)}\leq E_R^s(t)^{{1}/{2}}\lVert \langle t+r\rangle^s \Box \phi\rVert_{\mathrm{L}^2(\Sigma_t)} .\tag*{\qedhere}
\end{multline*}
\end{proof}

In Proposition \ref{prop:normbyenergy} we also control the zeroth order term which we obtain by a weighted Hardy inequality.

\begin{lemma} \label{lemma:hardy} Let $f\geq 0$ be an even twice differentiable function on $\mathbb{R}$, such that $f'(v)\geq 0$ for $v\geq 0$,  and  ${f'}^2(v)\leq C f''(v)f(v)$.  Then
  \begin{equation*}
    \int_{\Sigma_t^R} f^{\,\prime\prime}(t-r) \phi^2 dx\lesssim \int_{\Sigma_t^R}  \Bigl[f(t+r)\bigl(L(r\phi)\bigr)^2+f(t-r)\bigl(\Lb(r\phi)\bigr)^2\Bigr]\frac{\ud x}{r^2}+\int_{\partial \Sigma_t^R}|f'|\phi^2 \, r^2\ud \omega .
  \end{equation*}
\end{lemma}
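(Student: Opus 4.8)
The plan is to reduce the inequality to a one-dimensional weighted Hardy inequality along each radial ray and then integrate over the sphere. Fixing $t$ and $\omega\in\mathbb{S}^2$ and writing $u=r\phi$, we have $\phi^2\,\ud x=u^2\,\ud r\,\ud S(\omega)$ and $\ud x/r^2=\ud r\,\ud S(\omega)$, while $\partial_r u=\tfrac12\big(L(r\phi)-\Lb(r\phi)\big)$ holds pointwise since $L-\Lb=2\partial_r$. Thus it suffices to establish, for each fixed $t$ and $\omega$, the radial estimate
\[
\int_0^R f''(t-r)\,u^2\,\ud r\lesssim \int_0^R\Big[f(t+r)(Lu)^2+f(t-r)(\Lb u)^2\Big]\ud r+|f'(t-R)|\,u^2(R),
\]
after which multiplying by $\ud S(\omega)$ and integrating over $\mathbb{S}^2$ yields the Lemma.

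For the radial estimate I would integrate by parts in $r$, using $f''(t-r)=-\partial_r\big(f'(t-r)\big)$. Since $u(0)=r\phi|_{r=0}=0$ the boundary contribution at the origin vanishes, and the boundary term at $r=R$ produces exactly $-f'(t-R)\,u^2(R)\leq |f'(t-R)|\,u^2(R)$, matching the flux term on the right. This leaves the cross term $2\int_0^R f'(t-r)\,u\,\partial_r u\,\ud r$, which I would estimate by Young's inequality in the form $2f'u\,\partial_r u\leq \tfrac12 f''u^2+2\,\frac{(f')^2}{f''}(\partial_r u)^2$; here the hypothesis $(f')^2\leq C f''f$ is used precisely to replace $\frac{(f')^2}{f''}$ by $C f(t-r)$. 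The resulting term $\tfrac12\int_0^R f''(t-r)u^2\,\ud r$ is then absorbed into the left-hand side.

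It remains to control $\int_0^R f(t-r)(\partial_r u)^2\,\ud r$ by the right-hand side. Writing $(\partial_r u)^2\leq\tfrac12\big((Lu)^2+(\Lb u)^2\big)$ reduces this to bounding $f(t-r)(Lu)^2$ by $f(t+r)(Lu)^2$, which follows from the monotonicity of $f$: since $f$ is even and nondecreasing on $[0,\infty)$ and $|t-r|\leq t+r$ for $t,r\geq0$, we have $f(t-r)=f(|t-r|)\leq f(t+r)$. Collecting the pieces gives the radial inequality, and integrating over $\omega$ completes the proof.

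The main obstacle is the bookkeeping in the Young estimate: one must split $2f'u\,\partial_r u$ so that the $u^2$-factor carries the weight $f''$ (to be reabsorbed) while the $(\partial_r u)^2$-factor acquires exactly the weight $f(t-r)$ dictated by $(f')^2\le Cf''f$, with the splitting constant small enough that reabsorption is legitimate. A minor point to address is that $f''>0$ is needed to form $\frac{(f')^2}{f''}$; this holds for the relevant $f(v)=(1+v^2)^{a/2}/a$, $a\ge 2$, and at any zero of $f''$ the hypothesis forces $f'=0$, so the cross term vanishes there and the estimate persists by continuity.
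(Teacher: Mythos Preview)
Your proof is correct and follows essentially the same route as the paper: reduce to a radial integral via $u=r\phi$, integrate by parts using $f''(t-r)=-\partial_r f'(t-r)$, control the cross term with the hypothesis $(f')^2\le Cf''f$, and then dominate $f(t-r)(\partial_r u)^2$ by $\tfrac12\bigl[f(t+r)(Lu)^2+f(t-r)(\Lb u)^2\bigr]$ using $f(t-r)\le f(t+r)$. The only cosmetic difference is that the paper splits the cross term by Cauchy--Schwarz with weight $f$, obtaining $\sqrt{\int (f')^2f^{-1}u^2}\,\sqrt{\int f(\partial_r u)^2}$ and then absorbing, whereas you use Young's inequality with weight $f''$ directly; these are equivalent and lead to the same conclusion.
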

\begin{proof}
With $\psi=r\phi$ we have
\[\int_{\Sigma_t^R} f^{\,\prime\prime}(t-r) \phi^2 \ud x=\int_0^R \int_{\mathbb{S}^2} f^{\,\prime\prime}(t-r) \psi^2 \ud r \ud\omega  \]
and
\begin{multline*}
\int_0^R f^{\,\prime\prime}(t-r) \psi^2 \ud r = -\int_0^R \frac{\ud}{\ud r}\bigl[f^{\,\prime}(t-r)\bigr] \psi^2 \ud r=-f^{\,\prime}(t-r) \psi^2 \Big|_{r=0}^R +\int_0^R f^{\,\prime}(t-r) 2 \psi \psi_r dr\\
\leq |f^{\,\prime}(t-R)| \psi^2(R)+\sqrt{ \int_0^R \frac{f^{\,\prime}(t-r)^2}{f(t-r)} \psi^2 \ud r}\sqrt{\int_0^R  f(t-r) \psi_r^2 \ud r} .
\end{multline*}
By the assumption on $f$, namely   $f^{\,\prime}(q)^2/f(q)\leq Cf^{\,\prime\prime}(q)$,
it follows that
\[\int_0^R f^{\,\prime\prime}(t-r) \psi^2 dr \lesssim |f'(t-R)|\psi^2(R)+ \int_0^R  f(t-r) \psi_r^2 dr\]
and hence
\[\int_{\Sigma_t^R} f^{\,\prime\prime}(t-r) \phi^2 dx\lesssim \int_{\partial \Sigma_t^R}|f'|\phi^2\, r^2\ud\omega+\int_{\Sigma_t^R} f(t-r) (\pa_r (r\phi))^2 \frac{dx}{r^2}  . \]
By assumption $f$ is even and increasing on $\mathbb{R}_+$, hence $f(t+r)\geq f(t-r)$ and
\begin{equation*}
  \frac{1}{2}\int_{\Sigma_t^R}  \Bigl[f(t+r)\bigl(L(r\phi)\bigr)^2+f(t-r)\bigl(\Lb(r\phi)\bigr)^2\Bigr]\frac{\ud x}{r^2} \geq \int_{\Sigma_t^R}f(t-r)\bigl(\partial_r (r\phi)\bigr)^2\frac{\ud x}{r^2} .\tag*{\qedhere}
\end{equation*}
\end{proof}

\begin{lemma} \label{lemma:hardystuff} We have
  \begin{equation*}
    \int_{\Sigma_t} \langle t-r\rangle^{2s} \phi^2 \frac{\ud x}{r^2}
\lesssim \int_{\Sigma_t} \langle t-r\rangle^{2s-2} \phi^2 \ud x+\int_{\Sigma_t} \langle t-r\rangle^{2s} (\pa_r (r\phi))^2 \frac{\ud x}{r^2} .
\end{equation*}
provided $\lim_{R\to\infty}\sup_{\partial \Sigma_t^R}\langle t-r\rangle^{2s} r\phi^2=0$.
\end{lemma}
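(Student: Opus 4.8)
The plan is to reduce the inequality to a one–dimensional weighted Hardy estimate along each radial ray and then integrate over the sphere. Setting $\psi=r\phi$ and writing $\ud x=r^2\,\ud r\,\ud S(\omega)$, one has
\[
\int_{\Sigma_t}\langle t-r\rangle^{2s}\phi^2\,\frac{\ud x}{r^2}=\int_{\mathbb{S}^2}\Big(\int_0^\infty \langle t-r\rangle^{2s}\frac{\psi^2}{r^2}\,\ud r\Big)\,\ud S(\omega),
\]
while the two terms on the right become $\int_{\mathbb{S}^2}\int_0^\infty\langle t-r\rangle^{2s-2}\psi^2\,\ud r\,\ud S(\omega)$ and $\int_{\mathbb{S}^2}\int_0^\infty\langle t-r\rangle^{2s}(\pa_r\psi)^2\,\ud r\,\ud S(\omega)$. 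It therefore suffices to establish, for each fixed $\omega$ and with $w:=\langle t-r\rangle^{2s}$, the radial bound $I\lesssim J+K$, where $I:=\int_0^\infty w\,\psi^2 r^{-2}\,\ud r$, $J:=\int_0^\infty w\,(\pa_r\psi)^2\,\ud r$ and $K:=\int_0^\infty\langle t-r\rangle^{2s-2}\psi^2\,\ud r$.

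First I would integrate $I$ by parts using $r^{-2}=-\tfrac{\ud}{\ud r}(r^{-1})$, which gives
\[
I=-\Big[\tfrac1r\psi^2 w\Big]_0^\infty+\int_0^\infty\tfrac1r\big(2\psi\,\pa_r\psi\,w+\psi^2 w'\big)\,\ud r.
\]
Both boundary terms vanish: at $r=\infty$ one has $\tfrac1r\psi^2 w=\langle t-r\rangle^{2s}r\phi^2\to0$ by the standing hypothesis, and at $r=0$ one has $\tfrac1r\psi^2 w=r\phi^2 w\to0$ since $\psi=r\phi$ vanishes linearly at the origin. The cross term is controlled by Cauchy–Schwarz,
\[
\Big|\int_0^\infty\frac{2\psi\,\pa_r\psi\,w}{r}\,\ud r\Big|\le 2\Big(\int_0^\infty w\frac{\psi^2}{r^2}\,\ud r\Big)^{1/2}\Big(\int_0^\infty w\,(\pa_r\psi)^2\,\ud r\Big)^{1/2}=2\,I^{1/2}J^{1/2}.
\]

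The decisive step is the remaining term $\int_0^\infty r^{-1}\psi^2 w'\,\ud r$. Computing $w'=2s\,(r-t)\langle t-r\rangle^{2s-2}$ and splitting $\tfrac{r-t}{r}=1-\tfrac{t}{r}$ yields
\[
\int_0^\infty\frac{\psi^2 w'}{r}\,\ud r=2s\int_0^\infty\langle t-r\rangle^{2s-2}\psi^2\,\ud r-2st\int_0^\infty\frac{\langle t-r\rangle^{2s-2}}{r}\psi^2\,\ud r\le 2s\,K,
\]
the point being that for $t\ge0$ the second integral is nonnegative and hence comes with a favourable sign, so it may simply be discarded. This is exactly where the Hardy weight cooperates with the geometry, and is the main thing to get right; it is also where the restriction to the future $t\ge0$ enters. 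Combining the three estimates gives $I\le 2I^{1/2}J^{1/2}+2sK$, and absorbing the cross term by Young's inequality $2I^{1/2}J^{1/2}\le\tfrac12 I+2J$ produces $I\le 4J+4sK$, i.e.\ $I\lesssim J+K$. Integrating over $\omega\in\mathbb{S}^2$ then gives the Lemma.
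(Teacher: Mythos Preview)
Your proof is correct and follows essentially the same radial Hardy strategy as the paper: both set $\psi=r\phi$, integrate by parts (your identity $r^{-2}=-\partial_r(r^{-1})$ and the paper's $\phi^2=2\phi\,\partial_r(r\phi)-\partial_r(r\phi^2)$ unwind to the same formula), and absorb the cross term by Cauchy--Schwarz.

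The only genuine difference is in the treatment of the weight-derivative term $\int r^{-1}\psi^2 w'\,\ud r$. The paper bounds $|w'|\le 2s\langle t-r\rangle^{2s-1}$ and applies Cauchy--Schwarz once more, which works for all $t\in\mathbb{R}$. You instead compute $w'/r=2s\langle t-r\rangle^{2s-2}(1-t/r)$ and discard the term $-2st\int r^{-1}\langle t-r\rangle^{2s-2}\psi^2\,\ud r$ by sign; this is slightly cleaner but restricts to $t\ge 0$. Since the paper only ever applies the lemma for positive $t$ this is harmless, but it is worth noting that the restriction is an artefact of your variant and not intrinsic to the statement. One small technical point: to make the absorption $I\le 2I^{1/2}J^{1/2}+2sK\Rightarrow I\le 4J+4sK$ rigorous you should first work on $[0,R]$ (where $I_R<\infty$ and the boundary term $-R^{-1}\psi^2 w\rvert_R\le 0$ has a good sign), then let $R\to\infty$; otherwise $I$ is not known to be finite from the hypothesis alone.
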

\begin{proof}
  We have
$\phi^2=2\phi \pa_r(r\phi)-\pa_r(r\phi^2)$
and hence
\begin{equation*}
\int_0^R\langle t-r\rangle^{2s} \phi^2 \ud r=\int_0^R 2\langle t-r\rangle^{2s} \phi \pa_r(r\phi)
+(\pa_r \langle t-r\rangle^{2s})  r\phi^2 \ud r-\langle t-r\rangle^{2s} r\phi^2\bigr\rvert^R_0 .
\end{equation*}
By assumption the boundary term vanishes, and after integrating over the sphere we obtain by Cauchy-Schwarz that
\begin{multline*}
  \int_0^\infty\int_{\mathbb{S}^2}\langle t-r\rangle^{2s} \phi^2 \ud S(\omega)\ud r\\
  \lesssim \Bigl(\int\int \langle t-r\rangle^{2s} \phi^2 \ud S(\omega)\ud r\Bigr)^{{1}/{2}}
  \Bigl(\int\int \langle t-r\rangle^{2s} \bigl( \pa_r(r\phi)\bigr)^2+ \langle t-r\rangle^{2s-2}  (r\phi)^2 \ud S(\omega) \ud r\Bigr)^{{1}/{2}}
\end{multline*}
where we used that $|\partial_r\langle t-r\rangle^{2s}|\leq 2s \langle t-r\rangle^{2s-1}$.
This proves the Lemma.
\end{proof}

\begin{proof}[Proof of Proposition~\ref{prop:normbyenergy}.]
Thus to infer the estimate \eqref{eq:normenergyestimate}  when $R\to\infty$, it remains to control the zeroth order term.
Note that with our choice of $f$, $f''(v)\geq 2s\langle v\rangle^{2s-2}$, and $|f'(v)|\leq 2s\langle v\rangle^{2s-1}$.
Hence by Lemma.~\ref{lemma:hardy}
  \begin{equation*}
    \int_{\Sigma_t^R} \langle t-r\rangle^{2s-2} \phi^2 dx\lesssim E_R^s(t)+\int_{\partial \Sigma_t^R}\phi^2 \,\langle t-r\rangle^{2s-1} r^2\ud \omega .
  \end{equation*}
  Furthermore, in the limit $R\to\infty$, the boundary term vanishes by assumption, and by Lemma~\ref{lemma:hardystuff}:
    \begin{equation*}
    \int_{\Sigma_t} \langle t-r\rangle^{2s} \phi^2 \frac{\ud x}{r^2}
\lesssim \int_{\Sigma_t} \langle t-r\rangle^{2s-2} \phi^2 \ud x+\int_{\Sigma_t} \langle t-r\rangle^{2s} (\pa_r (r\phi))^2 \frac{\ud x}{r^2} .
\end{equation*}
Here the last term is also controlled by $E_R^s(t)$, as $R\to\infty$, as demonstrated in the last step of the proof of Lemma~\ref{lemma:hardy}.
This shows the first estimate of the Proposition.

Finally let us turn to the bound \eqref{eq:normestimate}. The estimate is obviously true for $|I|=0$. For $|I|=1$ we have $Z\in\{\partial_t,\partial_i, \Omega_{ij}, S, \Omega_{0i}\}$.
Since
\begin{multline*}
  \langle t+r\rangle^2 \bigl|(\partial_t+\partial_r)(r\phi)\bigr|^2+\langle t-r\rangle^2 \bigl|(\partial_t-\partial_r)(r\phi)\bigr|^2 =\\
  =2\bigl(\partial_t(r\phi)\bigr)^2+2\bigl(\partial_r(r\phi)\bigr)^2+2\bigl(t\partial_t(r\phi)+r\partial_r(r\phi)\bigr)^2+2\bigl(r\partial_t(r\phi)+t\partial_r(r\phi)\bigr)^2
\end{multline*}
and with $S=t\partial_t+r\partial_r$, $\underline{S}=r\partial_t+t\partial_r$, and $\epsilon>0$,
\begin{multline*}
  \frac{2}{r^2}\bigl(S(r\phi)\bigr)^2+\frac{2}{r^2}\bigl(\underline{S}(r\phi)\bigr)^2+\phi^2+\frac{\langle t-r\rangle^2}{r^2}\phi^2\geq\\\geq 2(1-2\epsilon)(S\phi)^2+2(1-2\epsilon)(\underline{S}\phi)^2+\frac{1+(3-\epsilon^{-1})t^2-2tr+(4-\epsilon^{-1})r^2}{r^2}\phi^2
\end{multline*}
we can infer, with some $\epsilon<1/2$,  that
\begin{multline*}
  \|\phi(t,\cdot)\|_{1,+,s-1}^2\gtrsim  \int_{\mathbb{R}^3} \langle t-r\rangle^{2s-2} \Bigl[(\partial_t\phi)^2+(\partial_r\phi)^2\Bigr]\ud x\\+\int_{\mathbb{R}^3}\langle t-r\rangle^{2s-2}\Bigl[\bigl(S\phi\bigr)^2+\bigl(\underline{S}\phi\bigr)^2+\phi^2\Bigr]+\langle t+r\rangle^{2s-2}\sum_{i<j}\bigl(\Omega_{ij}\phi\bigr)^2 \ud x
\end{multline*}
and thus the inequality holds for  time-translations, scalings, and rotations,
\begin{equation*}
  \|\phi(t,\cdot)\|_{1,+,s-1}^2\gtrsim\sum_{Z\in\{\partial_t,S,\Omega_{ij}\}}\|\langle t-r\rangle^{s-1}(Z \phi)(t,\cdot)\|^2
\end{equation*}

Given that  $Z=\partial_i=\langle \partial_i ,\partial_r\rangle \partial_r+\nabb_i$,
\begin{equation*}
 \int \langle t-r\rangle^{2s-2} \bigl(\partial_i\phi\bigr)^2 \ud x\lesssim \int \langle t-r\rangle^{2s-2} \Bigl\{ (\partial_r\phi)^2+\lvert \nabb\phi\rvert^2 \Bigr\}\ud x 
\end{equation*}
the estimate also holds for translations $Z=\partial_i$.
Similarly  we can express the boosts  as
\begin{equation*}
\Omega_{0i}=t\partial_i+x^i\partial_t=\frac{x^i}{r}\Bigl(t\partial_r+r\partial_t\Bigr)+t\nablas_i=\frac{x^i}{r}\underline{S}+t\nablas_i
\end{equation*}
and conclude in view of the above that the inequality also holds for $Z=\Omega_{0i}$.

This proves \eqref{eq:normestimate} for $|I|=1$, and thus the Proposition.
\end{proof}

\section{Decay estimates}

In this section we prove a number of weighted Klainerman Sobolev inequalities which yield pointwise estimates from bounds on generalised conformal energies.

\subsection{Classical decay estimates}

A standard application of the Klainerman Sobolev inequality is the improved pointwise bound on the tangential derivatives to the outgoing cone.

\begin{lemma}\label{lemma:pl}
Suppose $v(t,\cdot)\in\mathrm{C}^\infty(\Sigma_t)$ such that $v(t,x)\to 0$ as $|x|\to\infty$. Then
\begin{equation}
| \pa v|\lesssim \frac{1}{\langle t+r\rangle \langle t-r \rangle^{1/2}}  \sum_{|{I}|\leq 2} \lVert \partial{Z}^{I} v\rVert_{L^2(\Sigma_t)}\,,
\end{equation}
\begin{equation}
| v|\lesssim \frac{1}{\langle t+r\rangle^{{1}/{2}}}  \sum_{|{I}|\leq 2} \lVert \partial{Z}^{I} v\rVert_{L^2(\Sigma_t)}\,,\qquad  \text{and}\qquad   |\pl v|\lesssim \frac{1}{\langle t+r\rangle^{{3}/{2}}} \sum_{|{I}|\leq 3}\lVert \partial {Z}^{I} v\rVert_{L^2(\Sigma_t)}\,.
  \end{equation}
\end{lemma}

\begin{proof} The first is the standard Klainerman-Sobolev inequality.
  The second inequality then follows from
  \begin{multline*}
    |v(t,r\omega)|\leq \int_r^\infty |\partial v(t,s\omega)| \ud s
    \leq C_S \sum_{|{I}|\leq 2}\lVert \partial {Z}^{I} v\rVert_{L^2(\Sigma_t)} \int_r^\infty\frac{\ud s}{(1+t+s)(1+|t-s|)^{{1}/{2}}}\\
    \leq \frac{C_S}{(1+t+r)^{{1}/{2}}}  \sum_{|{I}|\leq 2} \lVert \partial{Z}^{I} v\rVert_{L^2(\Sigma_t)}\,.
  \end{multline*}
  For the last recall that $|\pl v|$ gains a weight in $t+r$ in the sense that we have the pointwise inequality $|\pl v| \lesssim \langle t+r\rangle^{-1}\sum_{|I|=1} |Z^I v|$. Since by the first inequality $|Zv|\lesssim \langle t+r\rangle |\partial v| \lesssim \langle t-r\rangle^{-1/2}\to 0$ as $r\to\infty$ on each $\Sigma_t$, we can apply the second inequality also to $Zv$, and conclude that
    \begin{equation*}
    |\pl v|\leq C\sum_{|{I}|=1}\frac{|{Z}^{I} v|}{\langle t+r\rangle}\leq  \frac{C C_S}{\langle t+r\rangle^{{3}/{2}}} \sum_{|{I}|\leq 3}\lVert \partial {Z}^{I} v\rVert_{L^2(\Sigma_t)}\,.\tag*{\qedhere}
  \end{equation*}

\end{proof}

When we construct a solution from data on $\Sigma_T$, and take $T\to\infty$, we cannot a priori assume that the solution is compactly supported on all  $\Sigma_t$, $t\leq T$. A variant of the above proof applies even if the solution is compactly supported only on  $\Sigma_T$.

\begin{lemma}\label{lemma:pl:sigma}
  Let $T>0$ and $\phi\in C^\infty(\mathbb{R}^{3+1})$, and assume that $\phi$ is compactly supported on $\Sigma_T$, while for all $t\leq T$
  \begin{equation}
   {\sum}_{|{I}|\leq 3} \lVert \partial {Z}^{I} \phi\rVert_{L^2(\Sigma_t)} \leq C
 \end{equation}

  Then for all $t\leq T$
  \begin{equation}
    \sup_{\Sigma_t}|\pl \phi|\lesssim \frac{C}{\langle t\rangle^\frac{3}{2}}
  \end{equation}
\end{lemma}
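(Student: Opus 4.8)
The plan is to adapt the proof of Lemma~\ref{lemma:pl}, the only genuinely new difficulty being that $u$ is not assumed to decay at spatial infinity on the slices $\Sigma_t$ with $t<T$. I would first reduce matters to a pointwise bound on the first order quantities $Z^I u$, $|I|\le 1$, by invoking the same well-known pointwise identity $|\pl u|\lesssim (1+t+r)^{-1}\sum_{|I|=1}|Z^I u|$ that was used in Lemma~\ref{lemma:pl}. Since $\tfrac{1}{1+t+r}\le\tfrac{1}{1+t}$, it then suffices to establish the slice-wise bound $|Z^I u(t,x)|\lesssim C(1+t)^{-1/2}$ for $|I|\le 1$; deliberately discarding the $r$-weight at this stage is exactly what produces the stated $(1+t)^{-3/2}$ decay rather than the stronger $(1+t+r)^{-3/2}$.

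Next I would record two consequences of the energy hypothesis $\sum_{|I|\le 3}\|\partial Z^I u\|_{L^2(\Sigma_t)}\le C$. First, applying the Klainerman--Sobolev inequality to the \emph{derivative} $\partial Z^I u$ (never to $u$ itself, which we do not control in $L^2$) gives, for $|I|\le 1$ and every $t\le T$,
\[ |\partial Z^I u(t,x)|\lesssim \frac{C}{(1+t+r)(1+|t-r|)^{1/2}}, \]
because $\sum_{|J|\le 2}\|Z^J\partial Z^I u\|_{L^2(\Sigma_t)}\lesssim\sum_{|K|\le 3}\|\partial Z^K u\|_{L^2(\Sigma_t)}\le C$ after commuting $Z^J$ through $\partial$. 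Second, taking $Z^I$ to be pure spatial translations shows that $\partial Z^I u(t,\cdot)\in \mathrm{H}^2(\mathbb{R}^3)$ for $|I|\le 1$, hence $\partial Z^I u(t,\cdot)\in\mathrm{C}_0(\mathbb{R}^3)$ (continuous functions vanishing at infinity); in particular $\partial Z^I u(t,x)\to 0$ as $|x|\to\infty$ on every slice, which is what legitimizes the use of Klainerman--Sobolev above.

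The heart of the matter, and the place where compact support on $\Sigma_T$ enters, is to show that $Z^I u(t,s\omega)\to 0$ as $s\to\infty$ along each ray, for $t<T$. The integrability of the bound above guarantees that this limit exists; to see that it vanishes I would transport the information from $\Sigma_T$ by the fundamental theorem of calculus in time,
\[ Z^I u(t,s\omega)-Z^I u(T,s\omega)=-\int_t^T \partial_\tau Z^I u(\tau,s\omega)\,d\tau, \]
and estimate the right-hand side by $(T-t)\sup_{\tau\in[t,T]}\tfrac{C}{(1+\tau+s)(1+|\tau-s|)^{1/2}}$, which tends to $0$ as $s\to\infty$ since $|\tau-s|\ge s-T$ for such $\tau$. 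As $Z^I u(T,s\omega)=0$ once $s$ exceeds the support radius on $\Sigma_T$, this forces the limit to be $0$. I expect this transport step to be the main obstacle: it is the only point that uses the $\Sigma_T$-support, and it replaces the direct hypothesis $v\to 0$ of Lemma~\ref{lemma:pl}.

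With the boundary term at infinity gone, I would finish by integrating along the radial ray,
\[ |Z^I u(t,r\omega)|\le \int_r^\infty |\partial_s Z^I u(t,s\omega)|\,ds\lesssim \int_0^\infty \frac{C\,ds}{(1+t+s)(1+|t-s|)^{1/2}}\lesssim \frac{C}{(1+t)^{1/2}}, \]
the last inequality coming from splitting the integral at $s=t$ (which is precisely where the $r$-weight is lost). Combining this with the pointwise identity for $\pl u$ from the first paragraph yields $|\pl u|\lesssim (1+t+r)^{-1}(1+t)^{-1/2}C\le C(1+t)^{-3/2}$, as claimed.
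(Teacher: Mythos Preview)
Your proof is correct, but it reaches the estimate by a different path than the paper. You first transport the vanishing of $Z^I u$ at spatial infinity from $\Sigma_T$ down to $\Sigma_t$ via a time integral, and then integrate radially on $\Sigma_t$ exactly as in Lemma~\ref{lemma:pl}. The paper instead performs a single integration along a supercharacteristic ray $\gamma(\lambda)=(t+\lambda,(r+\sigma\lambda)\omega)$ with $\sigma>1$, which hits $\Sigma_T$ at $\lambda=T-t$; choosing $\sigma$ large enough (depending on the point and the support radius) makes the endpoint land outside the support of $u$ on $\Sigma_T$, so the boundary term drops out directly, and the Klainerman--Sobolev bound on $\partial Z^I u$ along $\gamma$ gives the same integral $\int_0^\infty \frac{d\mu}{(1+t+r+\mu)(1+|r-t+\mu|)^{1/2}}\lesssim (1+t)^{-1/2}$ after the change of variables $\mu=(\sigma-1)\lambda$, uniformly in $\sigma\geq 2$. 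Your two-step argument is more modular and stays closer to Lemma~\ref{lemma:pl}, isolating cleanly the one new ingredient (vanishing at spatial infinity on each slice); the paper's one-step argument is slightly more economical and makes the role of the parameter $\sigma$ transparent as the device that connects any point below $\Sigma_T$ to the region where $u$ vanishes. Both yield the same intermediate bound $|\pl u|\lesssim (1+t+r)^{-1}(1+t)^{-1/2}$ before discarding the $r$-weight.
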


\begin{proof}
  We integrate from the point $(t,r\omega)$ along the outward directed line
  \begin{equation*}
    \gamma(\lambda):\lambda\mapsto(t+\lambda,(r+\sigma\lambda)\omega)
  \end{equation*}
  for some $\sigma> 1$, until it hits  $t=T$; then for $|{I}|=1$,
  \begin{multline*}
    |{Z}^{I} \phi(t,r\omega)|\leq  |{Z}^{I} \phi|(\gamma(T-t))+\int_{0}^{T-t} \sigma|\partial {Z}^{I} \phi(\gamma(\lambda))| \ud \lambda\\
    \leq C C_S  \int_{0}^\infty\frac{\sigma\ud \lambda}{(1+t+r+(1+\sigma)\lambda)(1+|r-t+(\sigma-1)\lambda|)^{{1}/{2}}}
    \lesssim  CC_S\frac{1}{(1+t)^{{1}/{2}}}
  \end{multline*}
  and therefore
    \begin{equation*}
    |\pl \phi|\leq C{\sum}_{|{I}|=1}\frac{|{Z}^{I} v|}{1+t+r}\leq  CC_S\frac{1}{(1+t+r)(1+t)^{{1}/{2}}}\,.\tag*{\qedhere}
  \end{equation*}
\end{proof}

\subsection{Decay estimates from the conformal energy}
\label{sec:decay:conformal}

Finally we show a weighted version of the Klainerman Sobolev inequality that is adapted to the energy introduced in Section~\ref{sec:conformal:energy}.

\begin{lemma}\label{lemma:KS:weights} Let $v(t,\cdot)\in \mathrm{C}^\infty(\mathbb{R}^3)$ such that $|v(t,x)|\to 0$ as $|x|\to\infty$. Then for $s\geq 1$
  \begin{equation*}
   \langle t+r\rangle \langle t-r\rangle^{s-1/2} |v| \lesssim {\sum}_{|I|\leq 2}\|\langle t-r\rangle^{s-1} Z^I v(t,\cdot)\|_{\mathrm{L}^2(\mathbb{R}^3)} .
  \end{equation*}
\end{lemma}
\begin{proof}
  The case $s=1$ this is the standard Klainerman Sobolev inequality. For $s>1$, let us then apply this inequality to the function $\langle t-r\rangle^{s-1} v$:
  \begin{equation*}
    \langle t+r\rangle \langle t-r\rangle^{1/2}\langle t-r\rangle^{s-1}|v| \lesssim {\sum}_{|I|\leq 2}\| Z^I\big(\langle t-r\rangle^{s-1} v\big)(t,\cdot)\|_{\mathrm{L}^2(\mathbb{R}^3)} .
  \end{equation*}
  Since
  \begin{equation*}
    Z\langle t-r\rangle^{s-1}\leq (s-1)\langle t-r\rangle^{s-2}|Z(t-r)|\lesssim \langle t-r\rangle^{s-1}
  \end{equation*}
  and hence $|Z^I  \langle t-r\rangle^{s-1}|\lesssim \langle t-r\rangle^{s-1}$, for $|I|\leq 2$, the statement of the Lemma follows.
\end{proof}

\subsection{Weighted interior decay estimates using energies on light cones}
Another way to get improved interior decay from infinity is to use energies on light cones.
\begin{lemma} Suppose that $v=\partial_t v=0$ when $t=t_2$. Then
\begin{equation}
t_1^{1/2} |w_-^\gamma|^{1/2}|v(t_1,x)|\lesssim \Big(\int_{t_1}^{t_2}\!\!\!\!\int_{\partial \Sigma_{t}^{t-t_1}}\bigtwo( |\pl S v|^2\!+(t\!-r)^2 |\Box v|^2\bigtwo)\,w_-^\gamma dS \ud t
+ \sum_{|I|\leq 3} \int_{\Sigma_{t_1}}|\partial Z^I v|^2 w_-^\gamma\ud x\Big)^{\! 1/2}.
\end{equation}
\end{lemma}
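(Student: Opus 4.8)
The plan is to follow the two–step scheme indicated just before the statement: first bound the solution at the spatial origin $v(t_1,0)$ by the flux through the characteristic cone issuing from $(t_1,0)$, and then propagate this bound radially outward along $\Sigma_{t_1}$ using the weighted interior energy. The relevant cone is precisely the one appearing in the flux term: with $R=t_2-t_1$ the truncated spheres $\partial\Sigma_t^{t-t_1}=\{|x|=t-t_1\}$ sweep out the forward null cone $\{t-r=t_1\}$ with vertex at $(t_1,0)$, on which $q=r-t=-t_1$ is constant, so that $w_-^\gamma=(1+t_1)^{1+2\gamma}$ there. This matches the value of $w_-^\gamma$ at the origin and is what makes the two weights on the two sides of the asserted inequality compatible.

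For the value at the origin, along each null generator $t\mapsto(t,(t-t_1)\omega)$ of the cone one has $\frac{d}{dt}v(t,(t-t_1)\omega)=Lv$, and since $v$ vanishes on $\Sigma_{t_2}$ integration from $t=t_2$ down to the vertex gives $v(t_1,0)=-\int_{t_1}^{t_2}Lv\,dt$ for every $\omega$. Because $Lv$ is tangential to the cone, $(Lv)^2\le|\pl v|^2$, so the integrand is controlled by the flux. I would then combine a Sobolev inequality on $\mathbb{S}^2$ in the rotation fields $\Omega_{ij}$ (two of them suffice) with a Cauchy--Schwarz in $t$ against the cone weight $(1+t_1)^{1+2\gamma}(t-t_1)^2$ to produce a bound of the form $t_1(1+t_1)^{1+2\gamma}v(t_1,0)^2\lesssim\sum_{|I|\le3}\int_{t_1}^{t_2}\int_{\partial\Sigma_t^{t-t_1}}|\pl Z^Iv|^2\,w_-^\gamma\,dS\,dt$. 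The delicate point is the degeneracy of the area element $dS=(t-t_1)^2\,dS(\omega)$ as $t\downarrow t_1$: near the vertex a single generator does not see enough measure, so the near--vertex contribution must instead be absorbed into the interior energy $\int_{\Sigma_{t_1}}|\partial Z^Iv|^2w_-^\gamma\,dx$ (where $w_-^\gamma\sim t_1^{1+2\gamma}$ is large), by a weighted three–dimensional Sobolev/Hardy bound for $v$ on a ball of radius $\sim t_1$ about the origin.

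For the radial propagation, fix $x=r_0\omega$ with $r_0<t_1$ and apply the fundamental theorem of calculus to $s\mapsto w_-^\gamma(t_1,s\omega)\,v(t_1,s\omega)^2$ on $[0,r_0]$, giving
\[
w_-^\gamma(t_1,x)\,v(t_1,x)^2=(1+t_1)^{1+2\gamma}v(t_1,0)^2+\int_0^{r_0}\!\big(2\,w_-^\gamma\,v\,\partial_rv-(1+2\gamma)(1+t_1-s)^{2\gamma}v^2\big)\,ds.
\]
Multiplying by $t_1$, the first term is exactly the origin bound, while the two integrals must be dominated by the interior energy. The zeroth–order integrals $\int(1+t_1-s)^{2\gamma}v^2$ are converted into first–order energy by the weighted Hardy inequalities (Lemma~\ref{lemma:hardy} and Lemma~\ref{lemma:hardystuff}) applied with $f(q)=\langle q\rangle^{1+2\gamma}$, and the supremum over $\omega$ is recovered from the resulting $L^2(\mathbb{S}^2)$ bounds using the rotation fields. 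The budget $|I|\le3$ is then accounted for by two rotations for the spherical Sobolev embedding and one further derivative used in the radial/Hardy step.

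The main obstacle, common to both steps, is the degeneration of the spherical measure at the vertex of the cone and at the spatial origin, together with the precise matching of the $t_1^{1/2}$ prefactor and the weight $w_-^{\gamma/2}$ on the two sides. The resolution is to avoid relying on a single null generator or radial ray near the origin, and instead to trade the small–$r$ region between the cone flux and the (large–weight) interior energy, using the already established Hardy inequalities to handle the zeroth–order terms. Away from the origin, i.e.\ for $s\gtrsim t_1$, the $t_1$ factor is absorbed by the volume weight $s^2\sim t_1^2$ and the remaining estimates are routine Cauchy--Schwarz manipulations against the weighted energy.
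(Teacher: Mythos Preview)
Your two--step plan (origin estimate via the cone flux, then radial propagation using the interior energy) is exactly the scheme the paper follows, and the observation that on the cone $\{t-r=t_1\}$ the weight $w_-^\gamma$ is the constant $(1+t_1)^{1+2\gamma}$ is the right starting point. The paper in fact only writes out the origin step explicitly and leaves the radial propagation to the reader, so your description of the second step is at least as detailed as the paper's.

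The substantive difference is in how the origin bound is obtained. You integrate $Lv$ along a generator and then have to confront the degeneracy of $dS=(t-t_1)^2\,dS(\omega)$ at the vertex, proposing to push the near--vertex piece into the interior energy on $\Sigma_{t_1}$. The paper avoids this issue altogether by a different choice of integrand: it sets $V_+(\xi)=L(rv)\bigl(t_1+\xi,\xi\omega\bigr)$, which satisfies $V_+(0)=v(t_1,0)$ because $L(rv)=v+rLv$ and $r=0$ at the vertex, and $V_+(t_2-t_1)=0$ by the data assumption. One then applies Cauchy--Schwarz with the weight $(t_1+\xi)\sim(t+r)$ (so that $\int(t_1+\xi)^{-2}d\xi\leq t_1^{-1}$ produces exactly the $t_1^{1/2}$ prefactor), takes the $\sup_\omega$ via Sobolev on $\mathbb{S}^2$, and finally uses the elementary identity
\[
\int_0^T\bigl|\partial_\xi(\xi w)\bigr|^2\,d\xi=\int_0^T|\partial_\xi w|^2\,\xi^2\,d\xi+Tw(T)^2
\]
to convert the $LL(rv)$--type quantity into an honest flux integral with the area factor $\xi^2=r^2$. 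The upshot is that the origin estimate comes \emph{purely from the cone flux}, with no recourse to the interior energy; the $\Sigma_{t_1}$ term in the lemma is only used in the radial propagation step.

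Your alternative would likely be made to work, but the near--vertex fix is not quite as written: the points $(t_1+\xi,\xi\omega)$ with $\xi>0$ lie \emph{above} $\Sigma_{t_1}$, so the interior energy on $\Sigma_{t_1}$ does not directly control $|Lv|$ there, and you would need an additional local energy estimate on the region between the cone and $\Sigma_{t_1}$ (or a direct 3D Sobolev bound for $v(t_1,0)$ from $\int_{\Sigma_{t_1}}|\partial Z^Iv|^2 w_-^\gamma\,dx$, which you have not carried out). The paper's $L(rv)$ trick sidesteps all of this.
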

We first give the estimate at the origin. Once we have an estimate for $v$ at the origin we can integrate the derivative from the origin and estimate the derivative using a weighted Klainerman Sobolev inequality as in \cite{LS06b} to get the better decay estimate everywhere in the interior,
which proves the above Lemma.

\begin{lemma} Suppose that $v=\partial_t v=0$ for $t=t_2$, $t_2>q_-$. Then
\begin{align}\label{eq:firstconedecayestimate}
{r^{1/2}}|w_-^\gamma|^{1/2}|v(r+q_-,r\omega)|&\lesssim \sum_{|I|\leq 2}\Big(\int_{r+q_-}^{t_2}\!\int_{\partial \Sigma_{t}^{t-q_-}} |\pl\Omega^I v|^2\,w_-^\gamma dS \ud t\Big)^{\! 1/2}\!\!\!\!\!,\\
\label{eq:secondconedecayestimate}
{q_-^{1/2}}|w_-^\gamma|^{1/2}|v(q_-,0)|&\lesssim \Big(\int_{q_-}^{t_2}\!\int_{\partial \Sigma_{t}^{t-q_-}}\!\!\!\bigtwo( |\pl S v|^2\!+(t\!-r)^2 |\Box v|^2\bigtwo)\,w_-^\gamma dS \ud t\Big)^{\! 1/2}\!\!\!\!\!,\\
\label{eq:thirdconedecayestimate}
{q_-^{1/2}}|w_-^\gamma|^{1/2}|v(r\!+\!q_-,r\omega)|&\lesssim \sum_{|J|\leq 3}\!\!\Big(\int_{\!q_-\!\!}^{t_2}\!\!\int_{\partial \Sigma_{t}^{t-q_-}}\!\!\!\bigtwo( |\pl Z^J\! v|^2\!+(t\!-\!r)^2 |Z^J\Box v|^2\bigtwo)\,w_-^\gamma dS \ud t\Big)^{\! 1/2}\!\!\!\!.\,\,\,\,\,\,\,\,\,\,
\end{align}
\end{lemma}
\begin{proof}
   Since $w_-^\gamma\!=(1\!+ q_-)^{1+2\gamma}$  on $\partial \Sigma_{t}^{t-q_-}$,  it is enough to prove it for $\gamma=-1/2$.
  We will be using the identity
  \begin{equation} \label{eq:decayconeidentityone}
\int_a^b |\partial_\xi (\xi V) |^2 \, d\xi+aV(a)^2=\int_a^b |\partial_\xi V |^2 \, \xi^2d\xi+bV(b)^2,
  \end{equation}
obtained from expanding the derivative and integrating by parts. Applying this identity to
$V(\xi)=v\big(\xi+q_-,\xi\omega\big)$, with $b=t_2-q_-$ where $V$ vanishes and $a=r$, and using Sobolev's inequality on the sphere gives
$$
r \sup_{\omega\in \Bbb S^2} v(r+q_-,r\omega)^2\leq \sum_{|I|\leq 2}\int_{\Bbb S^2} \int_{r}^{t_2-q_-}
\big|\pa_\xi \Omega^I v(\xi+q_-,\xi\omega)\big|^2\, \xi^2 d\xi \, dS(\omega),
$$
which proves \eqref{eq:firstconedecayestimate}.
 Let
  $$
   V_+(\xi):=(\partial_t+\partial_r)(r v)\big(\xi+q_-,\xi\omega\big).
  $$
 We will also be using
 \begin{equation}\label{eq:decayconeidentitytwo}
 (2\xi+q_-)\pa_\xi V_+(\xi)
 =\bigtwo(2(\pa_t+\pa_r)(r Sv)+(t-r) r\big( \Box \, v-r^{-2}\triangle_\omega v\big)\bigtwo)(\xi+q_-,\xi\omega),
 \end{equation}
 which follows  using the identity
 \begin{multline*}
 (t+r)(\partial_t+\partial_r)(\partial_t+\partial_r)(r{v})= (\partial_t+\partial_r)\big((t+r)(\partial_t+\partial_r)(r{v})\big)-2(\partial_t+\partial_r)(r{v})\\
 =(\partial_t+\partial_r)\big( 2S-2-(t-r)(\pa_t-\pa_r)\big)(r{v})
 =2(\pa_t+\pa_r) (rS\,{v})-(t-r)(\pa_t+\pa_r)(\pa_t-\pa_r)(r{v})\,.
\end{multline*}
 Let
$$
\overline{v}(t,r)=\frac{1}{4\pi}\int_{\Bbb S^2} v(t,r\omega) \, dS(\omega),
$$
denote the spherical mean.
We have
\begin{equation*}
v(q_-,0)=\overline{v}(q_-,0)=\overline{V}_+(0)=\int_0^{t_2-q_-}\partial_\xi \overline{V}_+\, d\xi.
\end{equation*}
Hence using \eqref{eq:decayconeidentitytwo} and the fact that the tangential Laplacian integrates to $0$ over the sphere
\begin{equation*}
v(q_-,0)
=\int_0^{t_2-q_-}\!\!\!\!\int_{\Bbb S^2}  \bigtwo(2(\pa_t+\pa_r) (rS v )+(t-r)r\Box v\bigtwo)\big(q_-\!+\xi,\xi\omega\big)\, dS(\omega)\frac{ d\xi}{2\xi\!+q_-}.
\end{equation*}
Using Cauchy-Schwarz we get
\begin{equation*}
|v(q_-,0)|
\leq \frac{C}{\sqrt{q_-}}\Big(\int_0^{t_2-q_-}\!\!\!\!\int_{\Bbb S^2}  \bigtwo|\bigtwo(2(\pa_t+\pa_r) (rS v )+(t-r)r\Box v\bigtwo)\big(q_-\!+\xi,\xi\omega\big)\bigtwo|^2 \,dS(\omega) d\xi\Big)^{1/2}.
\end{equation*}
To deal with $(\pa_t+\pa_r)(rSv)$ we use the identity \eqref{eq:decayconeidentityone}.
This proves \eqref{eq:secondconedecayestimate}. The proof of \eqref{eq:thirdconedecayestimate}
is a variation of the above. We have
\begin{equation*}
(\partial_t+\partial_r)(r \Omega^I\! v)\big(q_-\!+r,r\omega\big)\!
=\!\!\int_r^{t_2-q_-}\!\!\!\!\!\!\!\! \!\!\!\bigtwo(2(\pa_t+\pa_r) (rS\Omega^I \! v )
+(t-r_{\!})r\big(\Box \Omega^I\! v-r^{-2}\triangle_\omega\Omega^I \! v\big)\!\bigtwo)\big(q_-\!+\xi,\xi\omega\big)\, \frac{ d\xi}{\!2\xi\!+\!q_-\!\!\!}.
\end{equation*}
Proceeding as above we get
\begin{multline*}
\sup_{\omega\in\Bbb S^2}
\big|(\partial_t+\partial_r)(r \! v)\big(q_-\!+r,r\omega\big)\big|\!\\
\leq \frac{C}{\!\!\!\!\sqrt{q_-}\!\!}\sum_{|I|\leq 2}\!\!\Big( \int_{\Bbb S^2}
\!\int_r^{t_2-q_-}\!\!\!\!\!\!\!\!\bigtwo|\bigtwo(2(\pa_t+\pa_r) (rS\Omega^I  v )
+(t-r)r\big(\Box \Omega^I\! v-r^{-2}\triangle_\omega\Omega^I \! v\big)\bigtwo)\big(q_-\!+\xi,\xi\omega\big)\bigtwo|^2 d\xi\,d S(\omega)\Big)^{\!1/2}
\end{multline*}
The first two terms are dealt with as above. To deal with last term we write
\begin{equation}
\label{eq:somega}
\pab_{i}= \pa_{i}- \omega_{i}\pa_{r}
=\frac{\omega^k\Omega_{ik}}{r} = \frac {-\omega_{i}\omega^{k}\Omega_{0k} +
\Omega_{0i}}t.
\end{equation}
to express the tangential Laplacian as follows:
\begin{equation*}
\overline{\triangle}:= r^{-2}\triangle_\omega  =\delta^{ij}\overline{\pa}_i\overline{\pa}_j
=\frac{-\delta^{ij}(\overline{\pa}_i\omega_{j})\omega^{k}\Omega_{0k} +
\delta^{ij} \overline{\pa}_i\Omega_{0j}}t=-\frac{2\omega^{k}}{t}\frac{\Omega_{0k}}{r}+\frac{\delta^{ij} }t \overline{\pa}_i\Omega_{0j} .
\end{equation*}
Hence
\begin{multline*}
\sum_{|I|\leq 2}\int_{\Bbb S^2}
\!\int_r^{t_2-q_-}\!\!\!\!\!\!\!\!\bigtwo|\big((t-r)r\overline{\triangle}\Omega^I \! v\big)\big(q_-\!+\xi,\xi\omega\big)\bigtwo|^2 d\xi\,d S(\omega)\\
\lesssim\sum_{|J|\leq 3}\int_{\Bbb S^2}\! \int_{r}^{t_2-q_-}\!\!\!\!\!
\big|\overline{\pa} Z^J v(q_-\! +\xi,\xi\omega)\big|^2\xi^2 d\xi \, dS(\omega)
+\!\!\int_{\Bbb S^2} \!\int_{r}^{t_2-q_-}\!\!\!\!\!
\big| Z^J v(q_-\! +\xi,\xi\omega)\big|^2  d\xi \, dS(\omega) .
\end{multline*}
Here the second term is bounded by a constant times the first. In fact it follows from
\eqref{eq:decayconeidentityone}
\begin{equation}
\int_a^b | V |^2 \, d\xi\leq 4\int_a^b |\partial_\xi V |^2 \, \xi^2d\xi+2bV(b)^2.
  \end{equation}
  We have hence proven that
\begin{equation*}
\sup_{\omega\in\Bbb S^2}
\big|(\partial_t+\partial_r)(r \! v)\big(q_-\!+r,r\omega\big)\big|\!\\
\lesssim \sum_{|J|\leq 3}\!\!\Big(\int_{\! r\!\!}^{t_2}\!\!\!\!\int_{\partial \Sigma_{t}^{t-q_-}}\!\!\!\bigtwo( |\pl Z^J\! v|^2\!+(t\!-\!r)^2 |Z^J\Box v|^2\bigtwo)\,w_-^\gamma dS \ud t\Big)^{\! 1/2}.
\end{equation*}
Then we use the mean value theorem to estimate $v$ in terms of $(\pa_t+\pa_r)(rv)$.
\end{proof}

\section{Scattering and weighted energies for the homogeneous solution}
\label{sec:scattering}

In this section we want to solve the linear homogeneous wave equation $\Box \psi=0$ from infinity
with ``asymptotic data'' given in the form
\beq\label{eq:radiationfield}
\psi\sim \frac{F_0(r-t,\omega)}{r} +\psi_{e},\quad \text{where} \quad \psi_{e}=\frac{M\chi_e(r-t)}{r},\qquad \omega={x}/{|x|}.
\eq
  Here $F_0$ is the Friedlander radiation field, and
 $\psi_e$
 is an \emph{exact solution}  picking up the ``mass term'' in the exterior,
 where $\chi_e(s)= 1$, for $s\geq 2$, and $\chi_e(s)=0$ for $s\leq 1$ is a smooth function.\footnote{In particular, $\psi_e$ is not supported in $\{r\leq 1+t\}\cap\{t\geq 0\}$; see also Lemma~\ref{lemma:cutoff} below.}

The main assumption is that for some $1/2<\gamma<1$:
\begin{equation}\label{eq:decayassumption}
\|F_0\|_{N,\gamma-1/2}^2:=\sum_{|\alpha|+k\leq N} \int_{\mathbb{R}} \int_{\mathbb{S}^2} \big|(\langle\, q\rangle\pa_q)^k\pa_\omega^\alpha F_0(q,\omega)\big|^2\langle q\rangle^{2\gamma -1}\,  \ud S(\omega) dq \leq C
\end{equation}
which ensures suitable pointwise decay of the radiation field in $q=r-t$.
In fact, we have Lemma~\ref{lemma:F:Sobolev} below.
The main result of this section, yielding the existence of a scattering solution with the prescribed radiation field, is Theorem~\ref{thm:intro:hom} stated in the Introduction.

Here we will work with an approximate solution that goes one term further in the ``asymptotic expansion''
\begin{equation}\label{eq:improvedlinearexpansion}
\psi\sim \frac{\!F_0(r\!-\!t,\omega)\!\!}{r}
+\frac{F_{\!1}(r\!-\!t,\omega)}{r^2}+\psi_e\,;
\end{equation}
in fact for this to be a ``good approximation'' we will require $F_1$ to satisfy an ODE.

We will construct a scattering solution by considering a sequence of solutions $\psi_T$ with prescribed  data  when $t=T$ induced by  the following \emph{approximate solution} which is supported in the wave zone, away from the origin,
\begin{equation}\label{eq:linearexpansionpsi0}
\psi_{0}=\frac{F_0(r-t,\omega)}{r}\chi\big(\tfrac{\langle\, t-r\rangle}{r}\big),
\end{equation}
where $\chi$ is a smooth decreasing function such that
$\chi(s)=1$, when $s\leq 1/8$ and $\chi(s)=0$, when $s\geq 1/4$.
More precisely, we set $\psi_T=\psi_{01}+\psi_e+v_T$, where $\psi_{01}$ is the second order approximation
\begin{equation}\label{eq:improvedlinearexpansionpsi01}
\psi_{01}:=\psi_0+\psi_1,\quad\quad \psi_1=\frac{F_{\!1}(r\!-\!t,\omega)}{r^2}\chi\big(\tfrac{\langle\, t-r\rangle}{r}\big)\,.
\end{equation}
and $v_T$ satisfies $\Box v_T=-\Box\psi_{01}$ (as $T\to\infty$) and has trivial data at $t=T$. We then show that as $T\to \infty$, the limit $\psi=\lim_{T\to\infty}\psi_T$ exists in the norms introduced in \eqref{eq:norm:k}. 

\smallskip
Also let $F_i^{\,\prime}(q,\omega)=\pa_q F_i(q,\omega)$ and
\begin{equation}\label{eq:improvedlinearexpansionpsi:prime}
 \psi_i^\prime=\frac{F_{\!i}^{\,\prime}(r\!-\!t,\omega)}{r^{1+i}}\chi\big(\tfrac{\langle\, t-r\rangle}{r}\big)\,, i=0,1\,.
\end{equation}

\begin{lemma}\label{lemma:F:Sobolev}
  Let $F_0(q,\omega)$ satisfy \eqref{eq:decayassumption}. Then for $N'\leq N-3$
\begin{equation}\label{eq:decayassumptioninfty}
\|F_0\|_{N',\infty,\gamma}:=\sum_{|\alpha|+k\leq N'} \sup_{q\in\mathbb{R}}\sup_{\omega\in\mathbb{S}^2} \big|(\langle\, q\rangle\pa_q)^k\pa_\omega^\alpha F_0(q,\omega)\big|\langle q\rangle^{\gamma}\,   \leq C\,.
\end{equation}

\end{lemma}
\begin{proof}
  Consider the case $N'=0$.
First we show that the there is a sequence $q_i'\to\infty$ such that $\sum_{|\alpha|\leq 2}\int_{\mathbb{S}^2}|\partial_\omega^\alpha F_0|^2(q_i',\omega)\ud S\leq C \langle q'_i\rangle^{-2\gamma}\to\infty$. Indeed, since \eqref{eq:decayassumption} holds with $\mathbb{R}$ replaced by $I_k=[q_k,q_{k+1}]$, where $q_{k+1}=2q_k$, this follows from the mean value theorem.
  Then for $p=2\gamma-1$, using the Sobolev inequality on $\mathbb{S}^2$,
  \begin{equation}
    \begin{split}
      |F_0(q,\omega)|\lesssim& \sum_{|\alpha|\leq 2}\int_q^{q_i'} |\partial_q\partial_\omega^\alpha F_0| \ud q+\langle q_i'\rangle^{-2\gamma}
      \\\lesssim& \Bigl(\sum_{|\alpha|\leq 2}\int_{\mathbb{R}}\langle q\rangle^p (\langle q\rangle\partial_q\partial_\omega^\alpha F_0)^2\ud q\Bigr)^\frac{1}{2}\Bigl(\int_q^\infty\frac{\ud q}{\langle q\rangle^{2+p}}\Bigr)^\frac{1}{2}
    \lesssim \langle q\rangle^{\frac{-p-1}{2}}\lVert F_0 \rVert_{3,\gamma-1/2}
  \end{split}
\end{equation}
which proves the inequality because $(p+1)/2=\gamma$.
\end{proof}

\subsection{The  asymptotics of the approximate solutions}\label{sec:asymptotics:second:approx}

Let us quantify the error by which the approximations $\psi_0$ and $\psi_{01}$ fail to be solutions of the homogeneous wave equation.

\begin{lemma} \label{lemma:approx:error} We have
\begin{gather}
  |\,Z^I\Box(\psi_{0})|\lesssim \frac{\chi\big(\tfrac{\langle\, t-r\rangle}{2 r}\big)}{\langle\, t+r\rangle^3}
  \sum_{k+|\alpha|\leq 2+|I|} \big|(\langle\, q\rangle\pa_q)^k\pa_\omega^\alpha F_0(q,\omega)\big|\,,\label{eq:Z:Box:approx}\\
|\,Z^I\big(\Box{({\psi_0+\psi_1})}-(\triangle_\omega \psi_0/r^2-2\psi_1^\prime/r)\big)|\lesssim \frac{\chi\big(\tfrac{\langle\, t-r\rangle}{2 r}\big)}{\langle\, t+r\rangle^4}
\sum_{|\alpha|+k\leq 2+|I|,\, i=0,1} \langle\, q\rangle^{1-i}\big|(\langle\, q\rangle\pa_q)^k\pa_\omega^\alpha F_i(q,\omega)\big|\,. \label{eq:improvedlinearexpansionestimate}
\end{gather}
\end{lemma}
\begin{proof} Consider first the case $|I|=0$. We have
\begin{multline*}
\Box\psi_{0} =\frac{1}{r^3}\triangle_\omega F_0(r-t,\omega)\chi\big(\tfrac{\langle\, t-r\rangle}{r}\big)-\frac{2}{r}F_{0}^{\,\prime}(r-t,\omega)\chi^\prime\big(\tfrac{\langle\, t-r\rangle}{r}\big)\tfrac{\langle\, t-r\rangle}{r^2}\\
-\frac{1} {r}F_0(r-t,\omega)(\pa_r-\pa_t)\Big(\chi^\prime\big(\tfrac{\langle\, t-r\rangle}{r}\big)\tfrac{\langle\, t-r\rangle}{r^2}\Big),
\end{multline*}
where $F_{0}^{\,\prime}(q,\omega)=\pa_q F_0(q,\omega)$.
Recall that $\psi_0$ is supported in $\langle t-r\rangle\leq r/4$, hence $t\sim r$ in the support of the r.h.s~above, and the first estimate follows.
Moreover
\begin{multline*}
\Box{({\psi_0+\psi_1})} =\frac{1}{r^3}\triangle_\omega F_0(r-t,\omega) \chi\big(\tfrac{\langle\, t-r\rangle}{r}\big)-\frac{2F_1^\prime(r-t,\omega)}{r^3} \chi\big(\tfrac{\langle\, t-r\rangle}{r}\big)\\-\frac{2F_{0}^{\,\prime}(r-t,\omega)}{r}\chi^\prime\big(\tfrac{\langle\, t-r\rangle}{r}\big)\tfrac{\langle\, t-r\rangle}{r^2}
-\frac{F_0(r-t,\omega)}{r}(\pa_r-\pa_t)\Big(\chi^\prime\big(\tfrac{\langle\, t-r\rangle}{r}\big)\tfrac{\langle\, t-r\rangle}{r^2}\Big)\\
+\frac{\triangle_\omega F_1(r-t,\omega)}{r^4}\chi\big(\tfrac{\langle\, t-r\rangle}{r}\big)-\frac{2 F_{1}^{\,\prime}(r-t,\omega)}{r}\chi^\prime\big(\tfrac{\langle\, t-r\rangle}{r}\big)\tfrac{\langle\, t-r\rangle}{r^3}\\
-\frac{F_1(r-t,\omega)}{r}(\pa_r-\pa_t)\Big(\chi^\prime\big(\tfrac{\langle\, t-r\rangle}{r}\big)\tfrac{\langle\, t-r\rangle}{r^3}+\chi\big(\tfrac{\langle\, t-r\rangle}{r}\big)\tfrac{1}{r^2}\Big).
\end{multline*}
and note
\begin{equation}\label{eq:psi:cancellation}
  \triangle_\omega \psi_0/r^2-2\psi_1^\prime/r =\frac{\triangle_\omega F_0-2F_1^\prime}{r^3} \chi\big(\tfrac{\langle\, t-r\rangle}{r}\big)\,.
\end{equation}
Since $\langle r-t\rangle \sim\langle t+r\rangle$ in the support of $ \chi^\prime\big(\langle\, t-r\rangle/r\big)$ the second estimate follows.

The general case $|I|\geq 1$ is straightforward. To prove for example \eqref{eq:Z:Box:approx} in the case $|I|=1$, observe first that for $Z=\Omega_{(ij)}=x^i\partial_j-x^j\partial_i$ we have
\begin{equation*}
  \Omega_{(ij)}F_0(q,\omega)=\omega^i\Pi^k_j\partial_{\omega^k}F_0-\omega^j\Pi_i^k\partial_{\omega^k}F_0\,,\quad |\Omega_{ij}F_0(q,\omega)|\leq |\partial_\omega F_0|\,,
\end{equation*}
where $\Pi^i_j=\delta^i_j-\omega^i\omega^j$, and $\omega=x/|x|$; c.f.~\eqref{eq:xi:omegai}.
Moreover, for $Z=S=t\partial_t+r\partial_r$ we have
\begin{equation*}
 S F_0(r-t,\omega)=(r-t)\partial_q F_0(r-t,\omega)\,,\quad |S F_0|\leq \langle q\rangle \partial_qF_0\,.
\end{equation*}
The estimate then follows from the formula above for $\Box\psi_0$.
\end{proof}

To get a second order approximation for the homogeneous equation we choose $F_1$ so that
\beq
2F_1^{\,\prime}(q,\omega)\!= \triangle_\omega F_0(q,\omega),\quad F_1(0,\omega)=0.
\label{eq:F1homogeneouscondition}
\eq
In particular, this ODE allows us to estimate $F_1$ in terms of $F_0$:
\begin{lemma}\label{lem:F1byF0} Suppose $F_1$ satisfies \eqref{eq:F1homogeneouscondition}, with $\|F_0\|_{N,\gamma-1/2}+\|F_0\|_{N,\infty,\gamma}<\infty$, for some $N\geq 2$, and $1/2<\gamma<1$. Then
\beq
\|F_1\|_{N-2,\gamma-3/2}\lesssim \|F_0\|_{N,\gamma-1/2},\quad\text{and}\quad
\|F_1\|_{N-2,\infty,\gamma-1}\lesssim \|F_0\|_{N,\infty,\gamma}.
\eq
\end{lemma}

\begin{proof}
  We first show $\|F_1\|_{0,\gamma-3/2}\lesssim \|F_0\|_{2,\gamma-1/2}$.
  For $q\geq 0>q_0$ set \[f(q)=\int_{q_0}^q\int^\infty_{q'} \langle q^{\prime\prime}\rangle^{2 \gamma-3} dq^{\prime\prime} dq'\] then
  $0\leq -f''(q)= \langle q\rangle^{2\gamma-3}$, $f'(q)\lesssim \langle q\rangle^{2\gamma-2}$, $f(q)\lesssim \langle q\rangle^{2\gamma-1}$,
  and $f^{\,\prime}(q)^2\lesssim (-f''(q))f(q)$.
We write
\begin{equation*}
\int_0^{q_1}\!\!\int_{\mathbb{S}^2} F_1^2(q,\omega) (-f^{\prime\prime})(q) \ud S \ud q = -\int_{\mathbb{S}^2} f^\prime F_1^2 \ud S\Big|_{0}^{q_1}
+2\int_0^{q_1}\!\! \int_{\mathbb{S}^2} f^{\,\prime} F_1 F_1^\prime \ud S \ud q
\end{equation*}
and note  that
\begin{equation*}
0\leq   \int_{\mathbb{S}^2} f^\prime F_1^2 \ud S\Big|_{0}^{q_1} \lesssim  \|F_0\|_{2,\infty,\gamma} <\infty
\end{equation*}
because $F_1(0)=0$ and
\begin{equation*}
  |F_1(q)|\leq \int_0^q|\partial_\omega^2F_0|\ud q\leq \|F_0\|_{2,\infty,\gamma}\langle q\rangle^{1-\gamma} \,.
\end{equation*}
This also shows that $\|F_1\|_{0,\infty,\gamma-1}\lesssim \|F_0\|_{2,\infty,\gamma}$.
Moreover
\begin{equation*}
  \int_0^{q_1} \int_{\mathbb{S}^2} f^{\,\prime} F_1 F_1^\prime \ud S \ud q \lesssim
\sqrt{\!\!\int_0^{q_1} \!\!\int_{\mathbb{S}^2}F_1^2 (-f^{\prime\prime}) \ud S\ud q}
\sqrt{\int_0^{q_1} \!\!\int_{\mathbb{S}^2}{F_1^\prime}^2 f \ud S\ud q }
\end{equation*}
and therefore
\begin{equation*}
  \int_0^{q_1}\!\!\int_{\mathbb{S}^2} F_1^2(q,\omega) \langle q\rangle^{2\gamma-3} \ud S \ud q\lesssim \int_0^{q_1} \!\!\int_{\mathbb{S}^2}{F_1^\prime}^2 \langle q\rangle^{2\gamma-1} \ud S\ud q
\end{equation*}
as desired. Similarly for $q_1\leq 0$.
\end{proof}

In view of the energy estimates in Section~\ref{sec:conformal:energy}, we need to control $\Box {({\psi_0+\psi_1})}$ with the relevant weights in $\mathrm{L}^2_x$. Here the choice \eqref{eq:F1homogeneouscondition} is essential.

\begin{lemma}\label{lem:boxapprox} Suppose $F_1$ satisfies \eqref{eq:F1homogeneouscondition},  with $\|F_0\|_{N,\gamma-1/2}<\infty$, for some $N\geq 4$,  $1/2<\gamma< 1$. Then we have,  for all $s\geq 0$, $|I|\leq N-4$,
\begin{equation}
\|\langle t+r\rangle^s \Box (Z^I {({\psi_0+\psi_1})})(t,\cdot)\|_{L^2_x}\lesssim \frac{\|F_0\|_{4+|I|,\gamma-1/2}}{\langle\, t\rangle^{3/2+\gamma-s}}.
\end{equation}
\end{lemma}
\begin{proof}
  Recall that by \eqref{eq:psi:cancellation} with our choice of $F_1$, $\triangle_\omega \psi_0/r^2-2\psi_1^\prime/r =0$.
  Hence by Lemma~\ref{lemma:approx:error} we have
\begin{multline*}
  \|\langle t+r\rangle^s Z^I\Box {({\psi_0+\psi_1})}(t,\cdot)\|_{L^2_x}^2
  \lesssim\int \langle t+r\rangle^{2s}\Big(\frac{\chi\big(\tfrac{\langle\, t-r\rangle}{2 r}\big)}{\langle\, t+r\rangle^4}\Big)^2\times\\\times
  \sum_{|\alpha|+k|\leq 2+|I|}\Big(\langle\, q\rangle^2\big|(\langle\, q\rangle\pa_q)^k\pa_\omega^\alpha F_0(q,\omega)\big|^2+\big|(\langle\, q\rangle\pa_q)^k\pa_\omega^\alpha F_1(q,\omega)\big|^2\Big) \ud x\,.\\
\end{multline*}
Moreover, in polar coordinates,
\begin{multline*}
  \|\langle t+r\rangle^s Z^I \Box {({\psi_0+\psi_1})}(t,\cdot)\|_{L^2_x}^2
  \lesssim \int_0^R\!\!\int_{\mathbb{S}^2} \frac{ r^2\langle\, t-r\rangle^{3-2\gamma}}{\langle\, t+r\rangle^{8-2s}}\chi\big(\tfrac{\langle\, t-r\rangle}{2 r}\big)\times\\\times
\sum_{|\alpha|+k|\leq 2+|I|}\Big(\langle\, q\rangle^{2\gamma-1}
\big|(\langle\, q\rangle\pa_q)^k\pa_\omega^\alpha F_0(q,\omega)\big|^2
+\langle\, q\rangle^{2\gamma-3}
\big|(\langle\, q\rangle\pa_q)^k\pa_\omega^\alpha F_1(q,\omega)\big|^2\Big) \ud S(\omega) \ud r
\end{multline*}
and we have  by Lemma~\ref{lem:F1byF0}:
\begin{equation*}
\|\langle t+r\rangle^s Z^I \Box {({\psi_0+\psi_1})}(t,\cdot)\|_{L^2_x}^2
\lesssim \frac{\|F_0\|_{2+|I|,\gamma-1/2}^2}{\langle\, t\rangle^{3+2\gamma-2s}}
+\frac{\|F_1\|_{2+|I|,\gamma-3/2}^2}{\langle\, t\rangle^{3+2\gamma-2s}}
\lesssim \frac{\|F_0\|_{4+|I|,\gamma-1/2}^2}{\langle\, t\rangle^{3+2\gamma-2s}}\,.\tag*{\qedhere}
\end{equation*}
\end{proof}

Below we will also need an estimate on the conformal flux of the approximate solutions.
\begin{lemma}  \label{lem:approx:flux}
  For all $s<\gamma+1/2$, $1/2<\gamma<1$, we have the following estimate for $k\geq 1$, provided $F_1$ satisfies \eqref{eq:F1homogeneouscondition}:
  \begin{equation}
    E^s_{(k-1)}[\psi_0+\psi_1](t)\lesssim  \|F_0\|_{3+k,\gamma-1/2}^2\qquad E^s_{(k-1)}[\psi_e]\lesssim M^2
  \end{equation}
\end{lemma}

\begin{proof} We prove the case $k=1$.
  First we check, using Lemma~\ref{lemma:cutoff}, that $E^s[\psi_0]\lesssim \|F_0\|_{1,\gamma-1/2}^2$,
  and similarly, by Lemma~\ref{lem:F1byF0}, $E^s[\psi_1]\lesssim \|F_1\|_{1,\gamma-3/2}^2\lesssim \|F_0\|_{3,\gamma-1/2}^2$.
  Also $E^s[\psi_e]\lesssim M^2$.
\end{proof}

\subsection{Energy bounds for the linear homogeneous scattering problem}

Let us prove an energy bound using the second order asymptotics \eqref{eq:improvedlinearexpansion}.
Here we always assume that $F_1$ satisfies \eqref{eq:F1homogeneouscondition}.

A sequence of linear solutions $\Box\psi_T=0$, with  $\psi_T=v_T+\psi_{01}+\psi_e$ is obtained if we set
\begin{equation}
  \Box v_T=-\Box{({\psi_0+\psi_1})}.
\end{equation}
Recall that  $\Box\psi_e=0$. However, in view of Lemma~\ref{lemma:Z:v} we set
\begin{equation}\label{eq:v:T:homogeneous}
  \Box v_T=-\chi(t/T)\Box{({\psi_0+\psi_1})}.
\end{equation}
and assume that $v_T$ has trivial initial data at $t=T$.

\begin{prop} \label{prop:hom:energy}
   Let $v_T$  be a solution to \eqref{eq:v:T:homogeneous} with trivial data at $t=T$, and $\|F_0\|_{N,\gamma-1/2}<\infty$ for some $N\geq 6$, and $1/2<\gamma<1$. Then for all $0\leq t\leq T$,
   \begin{align}
     \|\pa (Z^I v_T)(t,\cdot)\|_{L^2_x} &\lesssim \frac{\|F_0\|_{4+|I|,\gamma-1/2}}{\langle\, t\rangle^{1/2+\gamma}} \qquad (|I|\leq N-4)\\
|Z^Iv_T(t,x)| &\lesssim \frac{\|F_0\|_{6+|I|,\gamma-1/2}}{\langle \,t+r\rangle^{{1}/{2}}\langle\, t\rangle^{1/2+\gamma}}  \qquad (|I|\leq N-6)
\end{align}
where the constants are independent of $T$. Moreover for $\psi_T=v_T+\psi_{01}+\psi_e$ we have
\begin{equation} \label{eq:scattering:hom:energy}
  \| \partial (Z^I\psi_T)(t,\cdot) \|_{L^2_x} \lesssim \|F_0\|_{4+|I|,\gamma-1/2}\,.
\end{equation}
\end{prop}

In the following we will use the standard notation $\|v(t,\cdot)\|_2=\|v(t,\cdot)\|_{L^2_x}=\|v\|_{L^2(\Sigma_t)}$ for the $L^2$ norm.

 \begin{proof}
   Since $v_T$ satisfies \eqref{eq:v:T:homogeneous} and  has vanishing data at $t=T$, the standard energy estimate implies
\begin{equation}
\|\pa v_T(t,\cdot)\|_2\lesssim \int_{t}^{T} \| \Box  v_T(t,\cdot)\|_2 \ud t
\lesssim \frac{\|F_0\|_{4,\gamma-1/2}}{\langle\, t\rangle^{1/2+\gamma}}.
\end{equation}
where in the last step we have applied  Lemma~\ref{lem:boxapprox} with $s=0$, and $|I|=0$.

In view of Lemma~\ref{lemma:Z:v} the boundary terms at $t=T$ also vanish for the higher order energy estimates:
\begin{equation}
\|\pa Z^I v_T(t,\cdot)\|_2\lesssim \int_{t}^{T} \| \Box  Z^I v_T(t,\cdot)\|_2 \ud t
\end{equation}
Moreover by Lemma~\ref{lemma:Z:chi},
\begin{equation}
  \|\Box Z^Iv_T\|_{L^2(\Sigma_t)}\lesssim \sum_{|J|\leq |I|} \|Z^J\Box\psi_{01}\|_{L^2(\Sigma_t)}
\end{equation}
where we have used that $\psi_{01}$ is supported in the wave zone.
   Hence in view of Lemma~\ref{lem:boxapprox},
\begin{equation}\label{eq:vl2}
\|\pa (Z^I v_T)(t,\cdot)\|_2
\lesssim \frac{\|F_0\|_{4+|I|,\gamma-1/2}}{\langle\, t\rangle^{1/2+\gamma}}.
\end{equation}

Moreover by Lemma~\ref{lemma:pl},
\begin{equation}
|Z^Iv_T(t,x)|
 \lesssim  \frac{1}{\langle \,t+r\rangle^{{1}/{2}}}  \sum_{|{J}|\leq 2+|I|} \lVert \partial({Z}^{J} v_T)\rVert_{L^2(\Sigma_t)}
 \lesssim \frac{\|F_0\|_{6+|I|,\gamma-1/2}}{\langle \,t+r\rangle^{{1}/{2}}\langle\, t\rangle^{1/2+\gamma}}.
\end{equation}

To obtain the bound on $\psi_T$, we can apply the same argument to $\psi_0+\psi_1+\psi_e$ in place of $v_T$. The energy of the approximate solutions on $t=T$ does not vanish,  but are estimated in Lemma~\ref{lemma:energy:approx} below.
 \end{proof}

\begin{lemma}\label{lemma:energy:approx}
  The approximate solutions have finite energy,
   \begin{equation}
     \|\partial Z^I(\psi_0+\psi_1+\psi_e) \|_{L^2(\Sigma_t)}\lesssim \|F_0\|_{3+|I|,\gamma-1/2}+M
   \end{equation}

 \end{lemma}
 \begin{proof} We show the case $|I|=0$.
   Given $\psi_0$ we can easily compute $L\psi_0$, $\Lb\psi_0$, and $\nablas\psi_0$, and find that
   \begin{equation*}
     \begin{split}
       \| \partial\psi_0 \|^2 \leq& \int_\mathbb{R}\int_{\mathbb{S}^2}\Bigl( (\Lb\psi_0)^2+(L\psi_0)^2+ |\nablas\psi_0|^2 \Bigr) r^2\ud r\ud S\\
       \lesssim& \int_{\mathbb{R}}\int_{\mathbb{S}^2}\sum_{k+|\alpha|\leq 1}|(\langle q \rangle \partial_q)^k \partial_\omega^\alpha F_0|^2 \langle q\rangle^{-2}  \ud q\ud S\lesssim \|F_0\|_{1,-1}^2
   \end{split}
 \end{equation*}
and similarly for $\psi_1$, $\| \partial \psi_1 \|\lesssim \| F_1 \|_{1,-2}\lesssim \| F_0 \|_{3,-1}$,
and finally also $\|\partial\psi_e\|\lesssim M$.
 \end{proof}

\begin{remark} \label{remark:loss:hom}
  Note that there is a loss of derivatives in Prop.~\ref{prop:hom:energy}: To show that the solution has finite energy  at $t=0$, we need several derivatives of $F_0$ in $\mathrm{L}^2$.

  For the homogeneous wave equation, in Sobolev spaces \emph{without} weights --- namely in the case $\gamma=1/2$ ---  it is known that this loss of derivatives can be avoided.
In fact, Friedlander proved in \cite{F80} that the map $\mathscr{R}$ which associates to initial data $(\psi_0,\psi_1)$ for the wave equation the function
\begin{equation}
  \label{eq:friedlander:R}
  [\mathscr{R}(\psi_0,\psi_1)](q,\omega)=\lim_{r\to\infty} (r\partial_t\psi)(r-q,r\omega)
\end{equation}
where $\psi$ is the solution to \eqref{eq:intro:wave} with $\psi(0,\cdot)=\psi_0$, $\partial_t\psi(0,\cdot)=\psi_1$, \emph{is a bijective isometry} from energy space $\mathcal{H}=\dot{H}^1(\mathbb{R}^3)\times L^2(\mathbb{R}^3)$ \emph{to} $L^2(\mathbb{R}\times\mathbb{S}^2)$.

In particular, it is shown in \cite{F80}~(Lemma~3.12)  that given $F_0'(q,\omega)\in C^\infty_0(\mathbb{R}\times\mathbb{S}^2)$ of compact support in $q$, there exists a unique solution $\psi\in C^\infty(\mathbb{R}^{3+1})$ to \eqref{eq:intro:wave} with radiation field $F_0(q,\omega)=\int_{-\infty}^q(F_0')(q',\omega)\ud q'$, and moreover
\begin{equation}
  \label{eq:friedlander}
   \| \nabla\psi(0,\cdot) \|_{L^2(\mathbb{R}^3)}+\| \partial_t\psi(0,\cdot) \|_{L^2(\mathbb{R}^3)} = \| F_0' \|_{L^2(\mathbb{R}\times\mathbb{S}^2)}\,.
 \end{equation}
 Furthermore there there exists a map $\mathscr{I}:L^2(\mathbb{R}\times\mathbb{S}^2)\to \mathcal{H}$ such that $\mathscr{R}\circ\mathscr{I}$ is the identity on $L^2(\mathbb{R}\times\mathbb{S}^2)$, and $\mathscr{I}\circ\mathscr{R}$ is the identity on $\mathcal{H}$.

\end{remark}

\subsection{Weighted conformal energy bounds for the homogenous scattering problem}
\label{sec:scattering:homogeneous}

As in the previous section  we consider a sequence  $\psi_T=v_T+({\psi_0+\psi_1})$, where $v_T$  is a solution to the equation
\begin{equation}\label{eq:Box:v:psi}
 \Box v_T=-\chi(t/T)\Box({\psi_0+\psi_1})
\end{equation}
with  trivial  data on $t=T$, which yields a solution $\psi=\lim_{T\to\infty}\psi_T$ to the homogeneous equation  $\Box\psi=0$, as shown in Section~\ref{sec:existence:homogeneous}.

Instead of using the standard energy estimate, we now apply the results of Section~\ref{sec:conformal:energy} and derive weighted energy bounds using the second order asymptotics of Section~\ref{sec:asymptotics:second:approx}.

\begin{prop}\label{prop:scattering:homogeneous}
  Suppose $v_T$ is a solution to \eqref{eq:Box:v:psi} with trivial initial data at $t=T$. As above $\psi_0+\psi_1$ is an approximate solution of the form \eqref{eq:improvedlinearexpansionpsi01}, where $F_1$ satisfies \eqref{eq:F1homogeneouscondition}, and $\|F_0\|_{N,\gamma-1/2}<\infty$, for some $N\geq 5$. If $1<s<\gamma+1/2$, then for all  $0\leq t\leq T$,  uniformly in $T>0$,
    \begin{gather}\label{eq:scattering:homogeneous:v}
\|\psi_T(t,\cdot)\|_{k,s-1}\lesssim \|F_0\|_{3+k,\gamma-1/2}\,,\\
      \|v_T(t,\cdot)\|_{k,s-1} \lesssim \frac{\|F_0\|_{3+k,\gamma-1/2}}{\langle\, t\rangle^{1/2+\gamma-s}}\,,
\end{gather}
for all $k\leq N-3$,
and for $|I|\leq N-5$,
\begin{gather}
  |Z^I\psi_T(t,x)|+|Z^I({\psi_0+\psi_1})(t,x)|\lesssim \frac{\|F_0\|_{5+|I|,\gamma-1/2}}{\langle\,t+r\rangle\langle\, t-r\rangle^{s-1/2}}\,,\\
    |Z^I v_T(t,x)|\lesssim \frac{\|F_0\|_{5+|I|,\gamma-1/2}}{\langle t+r\rangle\langle t\rangle^{1/2+\gamma-s}\langle t-r\rangle^{s-1/2}}\,.
\end{gather}

\end{prop}

\begin{proof}

First recall Proposition~\ref{prop:normbyenergy}, which gives us a bound on
  \begin{equation}
\|v_T(t,\cdot)\|_{k,s-1} \lesssim E^s_{(k-1)}[v_T]^{1/2}
\end{equation}
in terms of the higher order conformal energies.
Now by Proposition~\ref{prop:frac:morawetz}, we have for $s>1$,
\begin{equation}
  E^s_{(k-1)}[v_T]^{1/2}(t)\lesssim E^s_{(k-1)}[v_T]^{1/2}(T) +\sum_{|I|\leq k-1}\int_{t}^{T} \|\langle t+r\rangle^s \Box (Z^I v_T)(t,\cdot)\|_{L^2_x} \ud t .
\end{equation}
Note here also that for any fixed $t$ the boundary condition in Prop.~\ref{prop:frac:morawetz} is verified:
Since $\Box v_T=0$ when $r>2t$, and since we pose vanishing data for $v$ at $t=T$ it follows by finite speed of propagation that the solution $v$ vanishes when $r>2T+(T-t)$. Similarly $\Box(Z^I v_T)=0$ when $r>2t$, and it follows from Lemma~\ref{lemma:Z:v} that the initial data for $Z^Iv_T$ vanishes for $r>2t$, on $t=T$.
Furthermore by Lemma~\ref{lem:boxapprox} we have
\begin{equation}\label{eq:L:box:v}
  \begin{split}
\sum_{|I|\leq k-1}\int_{t}^{T} \|\langle t+r\rangle^s \Box (Z^Iv_T)(t,\cdot)\|_{L^2_x} \ud t
&\leq \int_{t}^{T}\frac{\|F_0\|_{4+k-1,\gamma-1/2}}{\langle\, t\rangle^{3/2+\gamma-s}}\ud t\\
&\lesssim \frac{\|F_0\|_{3+k,\gamma-1/2}}{\langle\, t\rangle^{1/2+\gamma-s}},\quad\text{if}\quad
s<\gamma+1/2.
\end{split}
\end{equation}
With regard to the exponent we have $3/2+\gamma-s<2$ because $\gamma<1$, and for this estimate $s\geq 0$.
However, for the integral to be finite we need $3/2+\gamma-s>1$, which gives the stated restriction $s<1/2+\gamma$.
For the conformal Morawetz estimate above we required $s>1$, and thus all conditions can be verified in view of $\gamma>1/2$.

Let us return to the boundary term at $t=T$ above.  From the definition \eqref{eq:energy:conformal} it follows, c.f.~also Lemma~\ref{lemma:hardystuff}, that $ E^s[v_T](T)\lesssim \int\langle t+r\rangle^{2s}|\partial v|^2\ud x\rvert_{t=T}$,
and hence by Lemma~\ref{lemma:Z:v}, $E^s_{(k-1)}[v_T](T)=0$. 
Therefore we have shown \eqref{eq:scattering:homogeneous:v}, and the pointwise bound for $v_T$ follows from Lemma~\ref{lemma:KS:weights}:
  \begin{equation}
   \langle t+r\rangle \langle t-r\rangle^{s-1/2} |Z^Iv_T| \lesssim {\sum}_{|J|\leq 2+|I|}\|\langle t-r\rangle^{s-1} Z^Jv_T(t,\cdot)\|_{\mathrm{L}^2} .
  \end{equation}

We can also apply Prop.~\ref{prop:normbyenergy} directly to the function $\psi_0+\psi_1$, and obtain in view of Lemma~\ref{lem:approx:flux}
  \begin{equation}\label{eq:psi01:s}
\|(\psi_0+\psi_1)(t,\cdot)\|_{k,s-1} \lesssim E^s_{(k-1)}[\psi_0+\psi_1]^{1/2}(t)\lesssim  \|F_0\|_{3+k,\gamma-1/2}
\end{equation}
and we infer the pointwise bound again by Lemma~\ref{lemma:KS:weights}.
\end{proof}

\subsection{Existence and uniqueness of  the limit as $T$ tends to infinity}\label{sec:existence:homogeneous}
Let us discuss the proof that the sequence $\psi_T$ constructed in Section~\ref{sec:scattering:homogeneous} converges as $T\to\infty$.
Let $T_2>T_1$ and for $i=1,2$ let $\psi_i=\psi_{T_i}=\psi_{01}+\psi_e+v_{T_i}$,
where $v_i=v_{T_i}$ solves \eqref{eq:Box:v:psi} with vanishing data at $t=T_i$, for $i=1,2$. We need to show that the difference $v=v_2-v_1$ tend to $0$ as $T_2>T_1\to\infty$ in the norms we used above.
Since
\begin{equation}
  \Box v= \bigl(\chi(t/T_1)-\chi(t/T_2)\bigr)\Box \psi_{01}
\end{equation}
and $v=v_2$ at $T_1$ we have for $t<T_1$, with a constant independent of $T_1$,
\begin{equation}
  \|v(t,\cdot)\|_{k,s-1}\lesssim E^s_{(k-1)}[v_2]^{1/2}(T_1)+\sum_{|I|\leq k-1}\int_{T_1/8}^{T_1}\|\langle t+r\rangle^s\Box Z^I \psi_{01}\|\ud t\,,
\end{equation}
and that this tends to zero as $T_1\to\infty$ is precisely what we have proven above.

For uniqueness of the limit, assume $F_0=0$. Then by \eqref{eq:scattering:homogeneous:v}, $\|\psi_T(t,\cdot)\|_{k,s-1}\lesssim \|F_0\|_{3+k,\gamma-1/2}=0$, and so
\begin{equation}
  \|\psi(t,\cdot)\|_{k,s-1}\leq \|\psi(t,\cdot)-\psi_T(t\cdot)\|_{k,s-1}\to 0\qquad (T\to\infty)\,.
\end{equation}

\section{The classical null condition model}
\label{sec:classical:null}
The first nonlinear model is an equation satisfying the classical null condition
\beq\label{eq:wave:null:u}
 \Box \phi = Q(\pa \phi,\pa \phi)
\eq
for which we  give asymptotic data in the form of a solution to the homogeneous equation:
\begin{equation}
   \phi\sim \phi_0\,, \qquad  \text{ where }\Box \phi_0=0\,.
 \end{equation}

 In other words, for a \emph{given} linear solution $\phi_0$ we will find a solution to \eqref{eq:wave:null:u} with the same asymptotics. To do this we take $\phi$ to be of the form $\phi=\phi_0+v$ where the difference $v=\phi-\phi_0$ satisfies
\begin{equation}
    \Box v = Q(\partial(v+\phi_0),\partial(v+\phi_0))
     = Q(\partial v,\partial v) + Q(\partial v,\partial \phi_0)+ Q(\partial \phi_0,\partial v) + Q(\partial \phi_0,\partial \phi_0)\,.
   \end{equation}

   We then consider a sequence of solutions $v_T$ corresponding to  \emph{trivial data for $v$ at $t=T$}, of the equation
   \begin{equation}\label{eq:Box:v:T:chi}
     \Box v_T = \chi(t/T) Q(\pa (\phi_0+v_T) , \pa (\phi_0 +v_T))
   \end{equation}
where $\chi$ is a smooth decreasing cutoff function sich that $\chi(s)=1$ for $0\leq s\leq 1/8$ and $\chi(s)=0$ for $s\geq 1/4$,
   and we obtain the solution $\phi$ as the limit of $\phi_T=\phi_0+v_T$ as $T\to\infty$.

 \begin{prop}\label{prop:wave:null}
   Let  $\varepsilon>0$, and $\phi_0$ be a solution to the homogeneous equation $\Box\phi_0=0$ with compact support at $t=0$, such that for $k=7$,
  \begin{equation}
    D_k:=   {\sum}_{|{I}|\leq k}\lVert \partial {Z}^{I} \phi_0\rVert_{L^2(\mathbb{R}^3)}<\varepsilon\,.
  \end{equation}
  Then for sufficiently small $\varepsilon>0$, and any $0<\mu<1/2-\varepsilon$, there exists a unique  solution $\phi$ to \eqref{eq:wave:null:u}  of the form $\phi=\phi_0+v$ where
    \begin{equation}
  \sum_{|{I}|\leq 5} \lVert \partial {Z}^{I} v (t,\cdot)\rVert_{L^2(\mathbb{R}^3)}  \lesssim \frac{\varepsilon}{\langle{}_{\,} t\rangle^{{1}/{2}-\mu}}\qquad (t\geq 0) \,.
  \end{equation}

\end{prop}

First note that   the higher order equations for $v_T$ are
\begin{equation}\label{eq:Box:Z:v}
  \Box {Z}^{I} v_T= \sum_{|J|+|K|\leq |I|} c^{IJ}_K Z^J(\chi(\tfrac{t}{T})) F_K
\end{equation}
where $c_{K}^{IJ}$ are constants and $F_{K}=Z^KQ(\pa \phi_T,\pa \phi_T)$ satisfies the following property:

\begin{lemma}\label{lemma:F:alpha}

  \begin{equation}
  |F_{I}| \lesssim \sum_{ |{J}|+|{K}|\leq |{I}|} \Bigl(|\partial {Z}^{J} v_T|\bigl(|\overline{\partial}{Z}^{K} v_T|+ |\overline{\partial} {Z}^{K}  \phi_0|\bigr) + |\partial {Z}^{J}  \phi_0|\bigl(|\overline{\partial}{Z}^{K}  \phi_0|+|\overline{\partial}{Z}^{K}  v_T|\bigr)\Bigr).
\end{equation}

\end{lemma}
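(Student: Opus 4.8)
The plan is to commute the vector fields $Z^I$ through the equation \eqref{eq:wave:null:v} and then exploit two classical facts about null forms: that the class of null forms is preserved under the action of the Klainerman vector fields, and that every null form obeys a pointwise estimate in which at least one factor carries a derivative tangential to the outgoing cone. First I would record that each commuting field $Z\in\{\partial_t,\partial_i,\Omega_{ij},\Omega_{0i},S\}$ satisfies $[\Box,Z]=c_Z\Box$, with $c_Z=0$ for all of them except the scaling $Z=S$, for which $c_S=2$. Applying $Z^I$ to $\Box v=\sum Q(\partial u,\partial u)$, where $u=v+u_0$, and moving the resulting factors of $\Box$ to the right-hand side gives
\begin{equation*}
  \Box Z^I v= F_I,\qquad F_I=\sum_{|J|\leq |I|} c_{I,J}\, Z^J\Bigl(\textstyle\sum Q(\partial u,\partial u)\Bigr),
\end{equation*}
so that $F_I$ is a finite linear combination of $Z^J$-derivatives, $|J|\le|I|$, of the null forms on the right of \eqref{eq:wave:null:v}; the corrections produced by $[\Box,S]=2\Box$ are themselves of this type and only enlarge the constants $c_{I,J}$.

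Next I would use that the two standard null forms $Q_0(\partial f,\partial g)=m^{\alpha\beta}\pa_\alpha f\,\pa_\beta g$ and $Q_{\alpha\beta}(\partial f,\partial g)=\pa_\alpha f\,\pa_\beta g-\pa_\beta f\,\pa_\alpha g$ span a space that is invariant under each $Z$. Since $[Z,\pa_\alpha]$ is in every case a constant-coefficient linear combination of the $\pa_\beta$, the Leibniz rule yields, for any null form $Q$,
\begin{equation*}
  Z\,Q(\partial f,\partial g)=\widetilde Q(\partial Zf,\partial g)+\widehat Q(\partial f,\partial Zg)+\check Q(\partial f,\partial g),
\end{equation*}
where $\widetilde Q,\widehat Q,\check Q$ are again null forms. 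Iterating, $Z^J Q(\partial f,\partial g)$ is a finite linear combination of null forms $Q'(\partial Z^{J_1}f,\partial Z^{J_2}g)$ with $|J_1|+|J_2|\le|J|$. Applying this to each term of $F_I$ and expanding $u=v+u_0$ in both slots, $F_I$ becomes a linear combination of null forms $Q'(\partial Z^J a,\partial Z^K b)$ with $a,b\in\{v,u_0\}$ and $|J|+|K|\le|I|$.

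Finally I would invoke the fundamental pointwise null-form estimate. Decomposing in the null frame \eqref{eq:frameintro}, one checks that neither $Q_0$ nor $Q_{\alpha\beta}$ contains the product $\uL f\,\uL g$ of two ``bad'' derivatives, so that
\begin{equation*}
  |Q'(\partial a,\partial b)|\lesssim |\partial a|\,|\overline{\partial}\, b|+|\overline{\partial}\, a|\,|\partial b|.
\end{equation*}
Substituting $a=Z^J v$ or $Z^J u_0$ and $b=Z^K v$ or $Z^K u_0$, and using the symmetry of the sum over $J,K$ to move the full derivative onto either $v$ or $u_0$, the four combinations collapse to exactly the four products in the statement. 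The step requiring the most care is the second one: confirming that the span of null forms is genuinely preserved under the boosts $\Omega_{0i}$ and the scaling $S$, so that no non-null quadratic term — in particular no $\uL\,\uL$ term — is ever generated. For the Lorentz fields this follows from the invariance of the Minkowski metric underlying $Q_0$ (and the induced action on the $Q_{\alpha\beta}$), and for the scaling from $[S,\pa_\alpha]=-\pa_\alpha$, which merely reproduces $Q$ up to a constant multiple. Once this algebraic closure is in hand, the pointwise estimate and the bookkeeping of the $v/u_0$ split are routine.
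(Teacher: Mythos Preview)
Your proposal is correct and follows essentially the same approach as the paper's own proof, which simply invokes the two standard facts you spell out: that $Z$ acting on a null form $Q(\partial u,\partial v)$ produces $Q(\partial Zu,\partial v)+Q(\partial u,\partial Zv)+\tilde Q(\partial u,\partial v)$ with $\tilde Q$ again a null form, and that every null form satisfies $|Q(\partial a,\partial b)|\lesssim |\partial a|\,|\overline{\partial} b|+|\overline{\partial} a|\,|\partial b|$. Your additional care with the commutation $[\Box,Z]$ and the explicit verification of null-form closure under boosts and scaling is more detailed than the paper's one-line appeal to ``standard properties,'' but the argument is the same.
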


\begin{proof}
  This follows from standard properties of null forms, namely that if $Q$ is a null from then ${Z} Q(\partial \phi,\partial v)=Q(\partial {Z} \phi,\partial v)+Q(\partial \phi,\partial {Z} v)+\tilde{Q}(\partial \phi,\partial v)$ where $\tilde{Q}$ is another null form, and that for all null forms we have the pointwise bound: $|Q(\partial \phi,\partial v)|\lesssim |\partial \phi||\overline{\partial} v|+|\overline{\partial }\phi||\partial v|$.
\end{proof}

Moreover since we take $\phi_0$ to be compactly supported on $t=0$, $F_I$ vanishes for $|x|\gtrsim T$. Thus in the support of the right hand side of  \eqref{eq:Box:Z:v} the $Z$-derivatives of the cutoff are bounded:

\begin{lemma} \label{lemma:Z:chi}
  For $|x|\lesssim T$,
  \begin{equation}
    \bigl| Z^I \chi(\tfrac{t}{T}) \bigr|  \lesssim \chi(\tfrac{t}{2T})\,.
  \end{equation}
\end{lemma}
\begin{proof}
  The only vectorfields for which $Z\chi(t/T)$ does not vanish are scalings, boosts, and time translations. For $Z=t\partial_t+x^i\partial_i$ we have $Z\chi=(t/T)\chi'$, and for $Z=x^i\partial_t+t\partial_i$, $Z\chi=(x^i/T)\chi'$, and in general $|Z^I \chi|\lesssim 1$ for $(t+|x|)/T \lesssim 1$. Note that $\chi(t/2T)=1$ in the support of $\chi^{(k)}(t/T)$.

\end{proof}

The reason we have introduced the cutoff is to ensure that not only $v_T$ but also $Z^I v_T$ has trivial data at $t=T$. To see this we estimate the boundary terms pointwise using  that $v_T$ satisfies an equation:

 \begin{lemma}\label{lemma:Z:v}
   Let $v_T$ satisfy the equation $-\Box v_T=g$ and have trivial initial data on $t=T$.
   Then
   \begin{equation}
     |\partial (Zv_T) |\lesssim \langle t+r\rangle |g|  \qquad \text{: on } t=T\,.
   \end{equation}
Moreover if $g$ vanishes identically for $t>T/2$, the $\pa( Z^I v_T)=0$ for all $|I|\geq 0$ at $t=T$.

 \end{lemma}

 \begin{proof}
Let us first consider the more general situation:
\begin{gather}
  -\Box u=f_2\\
  u\rvert_{t=T}=f_0\qquad \partial_t u\rvert_{t=T}=f_1
\end{gather}
We compute $\partial(Zu)$ at $t=T$ for all vectorfields $Z=\partial_t,\partial_i,\Omega_{ij},\Omega_{0j},S$:
\begin{align}
  \label{eq:2}
  \partial_t(\partial_tu)=&-\Box u+\Delta u=f_2+\Delta f_0\\
                            \partial_i(\partial_t u)=&\partial_if_1
\end{align}
Here and below we use the equation satisfied by $u$ to eliminate second time derivatives of $u$. For all tangential derivatives to $t=T$ we use that $u=f_0$ and $\partial_tu=f_1$ are known on $t=T$.
\begin{align}
  \label{eq:3}
  \partial_t(\Omega_{ij}u)=&\Omega_{ij}f_1\\
  \partial_k(\Omega_{ij}u)=&\delta^i_k\partial_j f_0+\delta^j_k\partial_if_0+\Omega_{ij}\partial_kf_0
\end{align}
Moreover
\begin{align}
  \label{eq:4}
  \partial_t(\Omega_{0j}u)=&\partial_jf_0+t\partial_jf_1+x^j\bigl(f_2+\Delta f_0\bigr)\\
  \partial_i(\Omega_{0j}u)=&t\partial_j\partial_i f_0+\delta^j_if_1+x^j\partial_if_1
\end{align}
and
\begin{align}
  \label{eq:5}
  \partial_t(Su)=&f_1+t(f_2+\Delta f_0)+x^j\partial_j f_1\\
  \partial_i(Su)=&t\partial_i f_1+\partial_if_0+x^j\partial_j\partial_i f_0
\end{align}

We can apply this formula to $v_T$ which corresponds to the case $f_0=f_1=0$, and $f_2=g$. This immediately yields for some constants $c_\beta$:
\begin{equation}
  \partial(Zv)=\sum_{|\beta|\leq 1}c_\beta x^\beta g
\end{equation}
and in particular
\begin{equation}
  \lvert \partial(Zv) \rvert \lesssim \langle t+r\rangle |g|\,.
\end{equation}

Now consider the case that $g=0$ vanishes identically for $t>T/2$.
Then by the commutation properties of the vectorfields $\Box Z^I v_T=0$ at $t=T$ for all $|I|\geq 1$, and we can apply the above to $u=Z^I v_T$ and proceed by induction on $|I|=k\geq 1$ to show  $u$ has trivial initial data at $t=T$. Indeed if $u=Z^I v_T$ has trivial data at $t=T$ for $|I|=k$, then since $\Box u=0$ the above implies $\pa (Z u)=0$ at $t=T$ hence $Z^Iv_T$ has trivial data for $|I|=k+1$.

 \end{proof}

We are now ready to prove the main result of this section.

\begin{proof}[Proof of Proposition~\ref{prop:wave:null}]

  We set
  \begin{equation*}
    \phi_T=\phi_0+v_T
  \end{equation*}
  where $\phi_0$ is a given linear solution and $v_T$ is a solution to \eqref{eq:Box:v:T:chi} with trivial data at $t=T$.

  Therefore by Corollary~\ref{cor:w:D} applied to $\phi_0$,  we have that for all $t<T$,
    \begin{equation*}
    \sum_{|{I}|\leq 5}\lVert \partial {Z}^{I} \phi_0 \rVert_{\mathrm{L}^2(\Sigma_{t})}+\sum_{|{I}|\leq 5}\Bigl(\int_t^T\int_{\mathbb{R}^3} \frac{\mu}{4}\frac{ |\pl {Z}^{I} \phi_0|^2}{(1\!+|q|)^{1+2\mu}}\ud t \ud x\Bigr)^{{1}/{2}} \leq \sum_{|{I}|\leq 5}\lVert \partial {Z}^{I} \phi_0 \rVert_{\mathrm{L}^2(\Sigma_{T})}
      \end{equation*}
      and similarly applied $v_T$ we have,  for any $\mu>0$,
      \begin{multline*}
    \sum_{|{I}|\leq 5}\lVert \partial {Z}^{I} v_T \rVert_{\mathrm{L}^2(\Sigma_{t})}+\sum_{|{I}|\leq 5}\Bigl(\int_t^T\int_{\mathbb{R}^3} \frac{\mu}{4}\frac{ |\pl {Z}^{I} v_T|^2}{(1\!+|q|)^{1+2\mu}} \ud x \ud t\Bigr)^{{1}/{2}} \leq \\\leq  C\Bigl(   \sum_{|{I}|\leq 5}\lVert \partial {Z}^{I} v_T \rVert_{\mathrm{L}^2(\Sigma_{T})} +  \sum_{|{I}|\leq 5}\int_{t}^{T}  \lVert \Box Z^I v_T \rVert_{L^2(\Sigma_t)}\Bigr)
  \end{multline*}

The boundary term for $Z^Iv_T$ at $t=T$ vanishes by Lemma~\ref{lemma:Z:v}.

For the estimate of the bulk term note that the right hand side of \eqref{eq:Box:Z:v} is compactly supported: Since the initial data for $\phi_0$ is compactly supported at $t=0$, the support of $\phi_0$ is contained in $|x|\gtrsim t$. Moreover $v_T$ has trivial data at $t=T$, hence  $v_T$ satisfies $|\Box v_T|\leq|Q(\pa v_T,\pa v_T)|$ for $|x|\gtrsim T$ and thus vanishes for $|x|\gtrsim 2T$, for all $0\leq t\leq T$. Therefore by Lemma~\ref{lemma:Z:chi} we have
\begin{equation}
  \lVert \Box Z^I v_T(t,\cdot) \rVert_2\lesssim \sum_{|K|\leq |I|}\lVert  F_K (t,\cdot) \rVert_2\,.
\end{equation}
  which we will estimate below using Lemma~\ref{lemma:F:alpha}.

 \paragraph{Bootstrap assumption:}
 Now assume that for some  $\delta\geq 0$,
  \begin{equation}\label{eq:bootstrap:v}
  \sum_{|{I}|\leq 5} \lVert \partial {Z}^{I} v_T \rVert_{L^2(\Sigma_t)}+\sum_{|{I}|\leq 5}\Bigl(\int_{t}^T\int_{\mathbb{R}^3} \frac{\mu}{4}\frac{ |\pl {Z}^{I} v_T|^2}{(1\!+|q|)^{1+2\mu}}\ud t' \ud x  \Bigr)^{{1}/{2}}  \leq \frac{\varepsilon}{\langle t\rangle^\delta}\tag{$\ast$} .
\end{equation}

This is evidently true at $t=T$ where the left hand side vanishes.
We will show that \eqref{eq:bootstrap:v} holds for all $0\leq  t\leq T$, provided $0\leq \delta<{1/2}-\mu$, and $\varepsilon>0$ is sufficiently small. In the remainder of the proof  we will drop the subscript in $v_T$ for ease of notation.

From the Klainerman-Sobolev inequality it then follows that
  \begin{equation*}
    \langle\, t\!-\!r\rangle^{{1}/{2}}\sum_{|{I}|\leq 3}|\partial {Z}^{I} v|    \leq \frac{\varepsilon C_S}{\langle\, t\!+r\rangle \langle t\rangle^{\delta}}
  \end{equation*}
  and by Lemma~\ref{lemma:pl:sigma}
  \begin{equation*}
    \sum_{|{I}|\leq 1}|\pl {Z}^{I} v|\leq \frac{C_S \varepsilon}{\langle t\rangle^{{3}/{2}+\delta}} .
  \end{equation*}

\paragraph{Spacetime terms:}
Consider the first string of terms for $F_I$  in Lemma~\ref{lemma:F:alpha}:
      \begin{equation*}
        \sum_{|{J}|+|{K}|\leq 5}\!\!\!\!\! \lVert |\partial {Z}^{J} v||\overline{\partial}{Z}^{K}  v|\rVert_{L^2(\Sigma_t)}\leq       \sum_{|{J}|\leq 3,|{K}|\leq 5}\!\!\!\!\!  \lVert |\partial {Z}^{J} v||\overline{\partial}{Z}^{K}  v|\rVert_{L^2(\Sigma_t)}+ \!\!\!\!\!    \sum_{|{J}|\leq 5,|{K}|\leq 1}\!\!\!\!\!  \lVert |\partial {Z}^{J} v||\overline{\partial}{Z}^{K}  v|\rVert_{L^2(\Sigma_t)}.
      \end{equation*}
      We will estimate these terms given the improved decay rates of $\pl v$, over $\partial v$, first in the $L^\infty$, then in the $L^2$-sense, as expressed in Corollary~\ref{cor:w:D} and Lemma~\ref{lemma:pl:sigma}  above.

      Firstly, using the $L^\infty$ estimate,
            \begin{multline*}
              \sum_{|{I}|\leq 5,|{J}|\leq 1}\int_t^T\lVert |\partial {Z}^{I} v || \overline{\partial}{Z}^{J}  v |\rVert_{L^2(\Sigma_{t'})}\ud t' \leq \sum_{|{I}|\leq 5,|{J}|\leq 1}\int_t^T\lVert \overline{\partial}{Z}^{J} v\rVert_{L^\infty(\Sigma_{t'})}\lVert \partial {Z}^{I} v\rVert_{L^2(\Sigma_{t'})}\ud t'\\
              \leq \sum_{|{I}|\leq 5}\int_t^T\frac{C_S\varepsilon}{\langle t'\rangle^{{3/2}+\delta}}\lVert \partial {Z}^{I} v\rVert_{L^2(\Sigma_{t'})}\ud t'
              \leq C_S\varepsilon^2\int_t^\infty \frac{1}{\langle t'\rangle^{{3/2}+2\delta}}\ud t' \leq \frac{\varepsilon^2C_S}{\langle t\rangle^{{1/2}+2\delta}}.
            \end{multline*}
            Secondly, using the $L^2$ estimate,
      \begin{multline*}
          \sum_{|{I}|\leq 3,|{J}|\leq 5}\int_t^T\lVert |\partial {Z}^{I}  v || \overline{\partial}{Z}^{J}  v |\rVert_{L^2(\Sigma_{t'})}\ud t' \leq \sum_{|{I}|\leq 3,|{J}|\leq 5}\int_t^T\Bigl(\int_{\mathbb{R}^3} |\partial {Z}^{I} v |^2| \overline{\partial}{Z}^{J} v |^2 \ud x\Bigr)^{1/2}\ud t'\\
          \leq \sum_{|{I}|\leq 3,|{J}|\leq 5}\int_t^T\sup_{\mathbb{R}^3}\Bigl[(1+(|x|-t'))^{{1/2}+\mu}|\partial {Z}^{I} v |\Bigr]\Bigl(\int_{\mathbb{R}^3} \frac{| \overline{\partial}{Z}^{J} v |^2}{(1+||x|-t'|)^{1+2\mu}} \ud x\Bigr)^{1/2}\ud t'\displaybreak[0]\\
          \leq \sum_{|{I}|\leq 3} \Bigl( \int_t^T \bigl[\sup_{\mathbb{R}^3}(1+(|x|-t'))^{{1/2}+\mu}|\partial {Z}^{I} v |\bigr]^2\ud t' \Bigr)^{1/2}\sum_{|{J}|\leq 5} \Bigr(\int_t^T\int_{\mathbb{R}^3} \frac{| \overline{\partial}{Z}^{J} v |^2}{(1+||x|-t'|)^{1+2\mu}} \ud x\ud t'\Bigr)^{1/2}\\
          \leq  \frac{2\varepsilon C_S}{\sqrt{\mu}}\Bigl(\int_{t}^\infty\frac{1}{\langle t'\rangle^{2+2\delta-2\mu}}\ud t'\Bigr)^{1/2}\frac{\varepsilon}{\langle t\rangle^\delta}
          \leq  \frac{2}{\sqrt{\mu(1+2\delta-2\mu)}}\frac{\varepsilon^2C_S}{\langle t\rangle^{{1/2}+2\delta-\mu}}.
      \end{multline*}

      Now we proceed similarly for the other terms in $F_{I}$.
      Indeed, for the purely linear terms, the above estimates formally apply if we replace $v$ by $\phi_0$, and set $\delta=0$.
      This yields:
      \begin{equation*}
          \int_t^T\sum_{|{I}|+|{J}|\leq 5} \lVert |\partial {Z}^{I} \phi_0||\overline{\partial}{Z}^{J}  \phi_0|\rVert_{L^2(\Sigma_{t'})}\ud t'\leq \frac{C_S}{\langle t\rangle^{{1/2}-\mu}}  \sum_{|{I}|\leq 5}\lVert \partial {Z}^{I} \phi_0 \rVert_{L^2(\Sigma_{T})}^2.
      \end{equation*}

      Similarly,
      \beq
        \sum_{|{I}|+|{J}|\leq 5} \int_t^T\lVert |\partial {Z}^{I} \phi_0||\overline{\partial}{Z}^{J}  v|\rVert_{L^2(\Sigma_{t'})}\ud t'\leq \frac{\varepsilon C_S}{\langle t\rangle^{{1/2}+\delta-\mu}}  \sum_{|{I}|\leq 5}\lVert \partial {Z}^{I} \phi_0 \rVert_{L^2(\Sigma_{T})},
        \eq
and
        \beq
                  \sum_{ |{I}|+|{J}|\leq 5} \int_t^T\lVert |\partial {Z}^{I} v||\overline{\partial}{Z}^{J}  \phi_0|\rVert_{L^2(\Sigma_{t'})}\ud t'\leq \frac{\varepsilon C_S}{\langle t\rangle^{{1/2}+\delta-\mu}}  \sum_{|{I}|\leq 5}\lVert \partial {Z}^{I} \phi_0 \rVert_{L^2(\Sigma_{T})}.
      \eq

      In conclusion,
      \begin{equation}\label{eq:F:null:spacetime}
        \sum_{|{I}|\leq 5}\int_{t}^{T}  \lVert F_{I} \rVert_{L^2(\Sigma_{t'})} \ud t'\leq \frac{CC_S}{\langle t\rangle^{{1/2}-\mu}} \Bigl(\frac{\varepsilon}{\langle t\rangle^\delta} + \sum_{|{I}|\leq 5}\lVert \partial {Z}^{I} \phi_0 \rVert_{L^2(\Sigma_{T})}\Bigr)^2\,.
      \end{equation}

      Therefore, 
      we have shown that
    \begin{multline*}
      \sum_{|{I}|\leq 5}\lVert \partial {Z}^{I} v_T \rVert_{\mathrm{L}^2(\Sigma_{t})}+\sum_{|{I}|\leq 5}\Bigl(\int_t^{T}\int_{\mathbb{R}^3} \frac{\mu}{4}\frac{ |\pl {Z}^{I} v_T|^2}{(1+|q|)^{1+2\mu}} \ud x \ud t\Bigr)^{1/2} \\ \lesssim   
      \sum_{|{I}|\leq 5}\int_{t}^{T}  \lVert F_{I} \rVert_{L^2(\Sigma_t)}\ud t 
      \leq \frac{CC_S}{\langle t\rangle^{{1/2}-\mu}} \Bigl(\frac{\varepsilon}{\langle t\rangle^\delta} + \sum_{|{I}|\leq 5}\lVert \partial {Z}^{I} \phi_0 \rVert_{L^2(\Sigma_{T})}\Bigr)^2\,,
      \end{multline*}
      which recovers the bootstrap assumption \eqref{eq:bootstrap:v}, provided $0\leq \delta<{1/2}-\mu$, and
      \begin{equation}
       CC_S\Bigl(\frac{\varepsilon}{\langle t\rangle^\delta} + \sum_{|{I}|\leq 5}\lVert \partial {Z}^{I} \phi_0 \rVert_{L^2(\Sigma_{T})}\Bigr)^2\leq  4 C C_S \varepsilon^2< \varepsilon\,,
      \end{equation}
      which can be achieved by choosing $\varepsilon<(4CC_S)^{-1}$.

    \paragraph{Existence and uniqueness of the limit:}
    To define $v=\lim_{T\to\infty}v_T$ consider two solutions $v_i=v_{T_i}$ to \eqref{eq:Box:v:T:chi} with trivial data at $t=T_i$, $i=1,2$, $T_2>T_1$. The difference $w=v_{1}-v_{2}$ satisfies the equation
    \begin{equation}\label{eq:Box:w:chi}
      \Box w=\chi(t/T_1)\bigl( Q(\pa \phi_0,\pa w)+Q(\pa w,\pa \phi_0) \bigr)+\bigl(\chi(t/T_2)-\chi(t/T_1)\bigr)Q(\pa (\phi_0+v_2),\pa (\phi_0+v_2)\bigr)
    \end{equation}
    We have by the energy estimate that for $t<T_1$,
    \begin{equation}
        \lVert \pa w(t,\cdot)\rVert_2\lesssim \lVert \pa w(T_1,\cdot) \rVert_2+ \int_t^{T_1}\lVert \Box w(t',\cdot) \rVert_2\ud t'
    \end{equation}
    and since $v_1$ has trivial data at $t=T_1$, we have precisely shown above that
    \begin{equation}
      \lVert \pa w(T_1,\cdot)\rVert = \lVert \pa v_2 (T_1,\cdot)\rVert_2 \lesssim D_5^2\langle T_1\rangle^{-1/2+\mu}
    \end{equation}

    Now consider the second term in the estimate for $\|\pa w(t,\cdot)\|$, namely the integral of $\lVert \Box w (t,\cdot) \rVert$.
    For the first term in \eqref{eq:Box:w:chi} we use the pointwise decay of the linear solution, $|\pa \phi_0|\lesssim D_2 \langle t\rangle^{-1}$ to infer that
    \begin{equation}
       \int_t^{T_1}\lVert \chi(t'/T_1)\bigl( Q(\pa \phi_0,\pa w)+Q(\pa w,\pa \phi_0)  \rVert_2\ud t'\lesssim \int_t^{T_1}\frac{D_5}{\langle t'\rangle} \lVert \pa w (t',\cdot)\rVert_2 \ud t'
    \end{equation}
    The second term in \eqref{eq:Box:w:chi} is only supported when $t>T_1/8$ (and $t<T_2/4$) and thus using the spacetime estimate established in \eqref{eq:F:null:spacetime}, which holds for $v_2$:
    \begin{equation}
      \int_{T_1/8}^{T_1}\lVert Q(\pa (\phi_0+v_2),\pa (\phi_0+v_2)) \rVert_2\ud t' \lesssim D_5^2\langle T_1\rangle^{-1/2+\mu}
    \end{equation}

    Similarly, for the higher orders
        \begin{multline}\label{eq:Box:Z:w:chi}
      \Box Z^Iw=\sum_{|J|+|K|\leq |I|}c^{IJ}_KZ^J(\chi(t/T_1))Z^K\bigl( Q(\pa \phi_0,\pa w)+Q(\pa w,\pa \phi_0) \bigr)\\+\sum_{|J|+|K|\leq |I|}c^{IJ}_K Z^J\bigl(\chi(t/T_2)-\chi(t/T_1)\bigr)Z^KQ(\pa (\phi_0+v_2),\pa (\phi_0+v_2)\bigr)
    \end{multline}
    we note that in the first term the vectorfields falling on the cutoff do not produce any weights by Lemma~\ref{lemma:Z:chi}, and we take the linear solution in $L^\infty$, $|\pa Z^I \phi_0|\lesssim D_7$, for $|I|\leq 5$. Moreover the second term is only supported for $t>T_1/8$, and controlled in $L^1_tL^2_x$ by \eqref{eq:F:null:spacetime} up to $|I|\leq 5$.
Finally by Lemma~\ref{lemma:Z:v} we have $\|\pa Z^I w (T_1,\cdot)\|=\|\pa Z^I v_2 (T_1,\cdot)\|_2\lesssim D_5^2\langle T\rangle^{-1/2+\mu}$.

    In the above estimates the constants are independent of $T_1$, and $T_2$.
    In view of the small data assumption $D_7\lesssim \varepsilon<1$ we have therefore shown that $E(t)= \sum_{|I|\leq 5}\|\partial  Z^Iw (t,\cdot)\|_2$ satisfies
\begin{equation}\label{ineq:energy:difference}
     E(t) \leq \varepsilon \langle T_1\rangle^{-1/2+\mu} + \int_t^{T_1} \frac{\varepsilon}{\tau} E(\tau) \ud\tau \qquad (t<T_1)\,,
   \end{equation}
   and thus  we conclude  by Gr\"onwall's inequality that
   \begin{equation}
     E(t)
     \leq (T_1/t)^\varepsilon \varepsilon \langle T_1\rangle^{-1/2+\mu}\to 0 \text{ as }T_1\to\infty \text{ if } \varepsilon+\mu <1/2\,.
   \end{equation}

   Morover the limit is unique, for if $\phi_0=0$ then by \eqref{eq:bootstrap:v} $\|\partial Z^I v_T(t,\cdot)\|_2=0$, so $\|\partial Z^I \phi(t,\cdot)\|_2\leq \|\partial Z^I (\phi-\phi_T)(t,\cdot)\|_2\to 0$ as $T\to\infty$, for all $|I|\leq 5$.
      \end{proof}

\section{The classical null condition model revisited}
\label{sec:classical:null:revisited}

In Section~\ref{sec:classical:null} we considered a scalar equation satisfying the classical null condition.
We now revisit the null condition model using the approximate solutions introduced in Section~\ref{sec:scattering},
for the following system:
\begin{subequations}\label{eq:nullcondition:system}
\begin{align}
\Box \psi &= Q(\pa \psi,\pa \phi)\label{eq:nullcondrevisited1}\\
 \Box \phi &= \widetilde{Q}(\pa \psi,\pa \phi)\label{eq:nullcondrevisited2}
\end{align}
\end{subequations}

The main result of this Section is the following more refined version of Theorem~\ref{thm:intro:nullcond} stated in the Introduction:

\begin{prop}\label{prop:nullcond:revisited}
  Let $F_0(q,\omega)$, $G_0(q,\omega)$, and $M,N>0$ be given such that for some $n\geq 5$, $k\geq 0$, $1/2<\gamma<1$, and $0<\varepsilon<1/2+\gamma$,
 \begin{equation}
   D_k:=\|F_0\|_{n+k+1,\gamma-1/2}+M+\|G_0\|_{n+k+1,\gamma-1/2}+N<\varepsilon\,.
 \end{equation}
 Then for $\varepsilon>0$ sufficiently small there exists a unique solution  $(\psi=\psi_{01e}+v,\phi=\phi_{01e}+u)$  of the system \eqref{eq:nullcondition:system} for $t\geq 0$,  where $\psi_{01e}$, and $\phi_{01e}$ are second order approximate solutions associated to the radiation fields $F_0$, $G_0$, and masses $M$, $N$. In fact, $\psi_{01e}$, $\phi_{01e}$ are given by \eqref{eqs:psi:01e:prime} and \eqref{eq:psi:i:prime}, where $F_1$, and $G_1$ satisfy the equations \eqref{eq:asymptoticnullcond1}, \eqref{eq:asymptoticnullcond2}, respectively.
Moreover, $u$, $v$ satisfy
 \begin{align}
   \|\pa Z^I v(t,\cdot)\|+\|\pa Z^I u(t,\cdot)\|&\lesssim \frac{\varepsilon}{\langle\, t\rangle^{1/2+\gamma}}\qquad (|I|\leq k)\label{prop:nullcond:system:eq:energybound}\\
  \|v(t,\cdot)\|_{1+k,s-1}  +\|u(t,\cdot)\|_{1+k,s-1}&\lesssim \frac{\varepsilon}{\langle t\rangle^{1/2+\gamma-s}} \qquad (0<s<1/2+\gamma)\label{prop:nullcond:system:eq:confenergybound}
 \end{align}
 and in particular for $k\geq 1$,
 \begin{equation}
   |Z^I v|+|Z^I u| \lesssim \frac{\varepsilon}{\langle t+r\rangle \langle t\rangle^{\gamma}}\qquad (|I|\leq k-1)\,.
 \end{equation}
\end{prop}

As in Sections~\ref{sec:classical:null}, \ref{sec:scattering} we construct the scattering solutions as limit of a sequence $(\psi_T=\psi_{01e}+v_T,\phi_T=\phi_{01e}+u_T)$ as $T\to\infty$, where $(\psi_T,\phi_T)$ are solutions to the system \eqref{eq:nullcondition:system} with \emph{trivial initial data} for $v_T$, and $u_T$, at $t=T$.

\subsection{Asymptotics for derivatives and the exterior mass term}
\label{sec:asymptotics:derivative}

Given  a solution $\psi$ to the \emph{linear} equation $\Box\psi=0$, the derivatives $\partial_t\psi$, $\partial_i\psi$ are themselves solutions of the wave equation and have a radiation field. The asymptotics of  the time derivative  are, c.f.~\eqref{eq:radiationfield},
\begin{equation}
\label{eq:radiationfieldderivative}
\partial_t \psi\sim -\frac{F_0^\prime(r-t,\omega)}{r} -\psi_{e}^\prime,\qquad  \psi_{e}^\prime =M\chi_e^\prime(r-t)/r \,,
\end{equation}
where $F_0^\prime(q,\omega)=\pa_q F_0(q,\omega)$, and $\chi_e'(s)$ is supported only for $1\leq s\leq 2$.
 The corresponding formula to the next order, c.f.~\eqref{eq:improvedlinearexpansionpsi01}, is
\begin{equation}
\partial_t\psi\sim -\frac{\!F_0^\prime(r\!-\!t,\omega)\!\!}{r}
-\frac{F_{\!1}^\prime(r\!-\!t,\omega)}{r^2}-\psi_e^\prime.
\end{equation}

This already motivates the definition of the following \emph{approximate solutions}; recall here also the form of the linear expansions \eqref{eq:linearexpansionpsi0}, \eqref{eq:improvedlinearexpansionpsi01}:
 \begin{gather}
\psi_{0e} := \psi_0+ \psi_e,\qquad
\psi_{0e}^\prime :=\psi_0^\prime +\psi_e^\prime, \label{eqs:psi:01e:prime}\\
\psi_{01e}:={\psi_0+\psi_1} +\psi_e,\qquad
\psi_{01e}^\prime :=\psi_0^\prime+\psi_1^\prime +\psi_e^\prime,
\end{gather}
where as in \eqref{eq:improvedlinearexpansionpsi:prime} above, we define
\begin{equation}\label{eq:psi:i:prime}
\psi_i^{\,\prime}:=\frac{\!F_i^{\,\prime}(r\!-\!t,\omega)\!\!}{r^{1+i}}\chi\big(\tfrac{\langle t-r\rangle}{r}\big)
\end{equation}
and $\chi$ is a smooth  function  such that $\chi(s)=1$ for $s\leq 1/8$, and $\chi(s)=0$ for $s\geq 1/4$, as in Section~\ref{sec:scattering}.
Note these expressions include the mass term in the exterior $\psi_e$.

The formulas for the asymptotics of the  spatial derivatives are slightly more involved but are obtained simply by differentiating \eqref{eq:improvedlinearexpansion}:
\begin{multline}
\partial_i\psi\sim \Big(\frac{\!F_0^\prime(r\!-\!t,\omega)\!\!}{r}
+\frac{F_{\!1}^\prime(r\!-\!t,\omega)\!-\!F_0(r\!-\!t,\omega)}{r^2}
-\frac{2F_{\!1}(r\!-\!t,\omega)}{r^3}+\psi_e^\prime-\frac{\psi_e}{r}\Big)\omega^i\\
+\frac{\pl_{\omega^i} F_{0}(r\!-\!t,\omega)\!\!}{r^2}
+\frac{\pl_{\omega^i} F_{1}(r\!-\!t,\omega)\!\!}{r^3},
\end{multline}
where $F^{\,\prime}_i(q,\omega)=\pa_q F_i(q,\omega)$, $\pl_{\omega^i}F(q,\omega)=(\delta_{i}^j-\omega^i\omega^j)\partial_{\omega^j}F(q,\omega)$, and we used that 
\begin{equation}\label{eq:xi:omegai}
    \partial_j\bigl( F(q,\omega) \bigr) = \partial_{j}\bigl(F(t-|x|,x/|x|)\bigr) = -F'(q,\omega)\omega^j+\frac{\delta_j^i-\omega^i\omega^j}{r}(\partial_{\omega^i}F)(q,\omega)\,;
\end{equation}
note in particular that $\Omega_{ij}F(q,\omega)=\omega^i\partial_{\omega^j}F-\omega^j\partial_{\omega^i}F$.

For simplicity in notation we will denote $\pl_{\omega^i}$ simply by $\partial_{\omega^i}$.
Moreover
 \begin{equation}\label{eq:psi:01:omega}
 \pa_{\omega^j}\psi_{0e}=\pa_{\omega^j}\psi_{0}, \qquad
 \pa_{\omega^j}\psi_{01e}=\pa_{\omega^j}\psi_{01}=\pa_{\omega^j}\psi_{0}+\pa_{\omega^j}\psi_{1},
\end{equation}
and we summarize our choice for the approximate solutions for the derivatives in
 \begin{equation}
 \psi_{01e,\,0}^\prime:=-\psi_{01e}^\prime,\qquad
  \psi_{01e,\,i}^\prime=\omega^i\psi_{01e}^\prime
  -\omega^i(\psi_{0e}\!+2\psi_1)/r+\pa_{\omega^i}\psi_{01}/r . \label{eqs:psi:01e:prime:components}
\end{equation}

\smallskip
In this section we will frequently use that the following comparisons hold in the support of the cut-off functions:
\begin{lemma}\label{lemma:cutoff}
  In the support of $\chi(\langle t-r\rangle/r)$ it holds $r^{-1}\lesssim \langle t-r\rangle^{-1}$, and $r^{-1}\lesssim \langle t+r\rangle^{-1}$. Moreover in the support of $\chi'(\langle t-r\rangle/r)$ we have in addition that $r\lesssim \langle t-r\rangle$. Finally, in the support of $\chi_e(r-t)$ we have $r^{-1}\lesssim \langle t+r\rangle^{-1}\lesssim \langle t-r\rangle^{-1}$, and the support of $\chi_e'(r-t)$ is contained in the support of $\chi(\langle t-r\rangle/r)$ for $t\gtrsim 1$.
\end{lemma}

\begin{lemma}\label{lemma:psi:01e:prime}
  We have
\begin{equation}\label{eq:timedifferentiatedcutofferror}
\big|\pa_\alpha\psi_{01e}-\psi_{01e,\,\alpha}^\prime  \big|
\lesssim \frac{\big|\chi^\prime\big(\tfrac{\langle\, t-r\rangle}{r}\big)\big|}{\langle\, t+r\rangle^3}
\big( \langle\, q\rangle
\big| F_0(q,\omega)\big| + \big|F_1(q,\omega)\big|\big).\,
\end{equation}

Moreover
\begin{equation}
  |\psi_{01e,\alpha}'| \lesssim \frac{\chi(\tfrac{\langle t-r\rangle}{r})}{\langle t+r\rangle}\frac{1}{\langle q\rangle}\biggl[\sum_{\substack{i=0,1\\ |\beta|+k\leq 1}}\langle q\rangle^{-i}|(\langle q\rangle\partial_q)^k\partial_\omega^\beta F_i(q,\omega)|+M\biggr]
\end{equation}
and
\begin{equation}\label{eq:psi:01e:higher}
  \bigl|\partial Z^I \psi_{01e}\bigr| \lesssim \frac{\chi(\tfrac{\langle t-r\rangle}{r})}{\langle t+r\rangle}\frac{1}{\langle q\rangle}\biggl[\sum_{\substack{i=0,1\\ |\beta|+k\leq 1+|I|}}\langle q\rangle^{-i}|(\langle q\rangle\partial_q)^k\partial_\omega^\beta F_i(q,\omega)|+M\biggr]
\end{equation}
\end{lemma}
\begin{proof}
  We have
\begin{equation*}
\pa_t ({\psi_0+\psi_1} +\psi_e)= -({\psi_0^\prime+\psi_1^\prime}+\psi_e^\prime)
+\Big(\frac{\!F_0(r\!-\!t,\omega)\!\!}{r}+\frac{F_{\!1}(r\!-\!t,\omega)}{r^2}\Big)\chi^{\,\prime}\big(\tfrac{\langle \,t-r\rangle}{r}\big)
\tfrac{t-r}{\langle \,t-r\rangle r}\,.
\end{equation*}
The bound  \eqref{eq:timedifferentiatedcutofferror}  in the case $\alpha=0$ then follows from Lemma~\ref{lemma:cutoff}.
Similarly, for $\alpha=i=1,2,3$,
\begin{equation*}
  \partial_i\psi_{01e}-\psi_{01e,i}^{\prime}=-\Big(\frac{\!F_0(r\!-\!t,\omega)\!\!}{r}+\frac{F_{\!1}(r\!-\!t,\omega)}{r^2}\Big)\chi^{\,\prime}\big(\tfrac{\langle \,t-r\rangle}{r}\big)\tfrac{(t-r)+\langle t-r\rangle^2}{\langle t-r\rangle r^2}\omega^i\,.
\end{equation*}

For the second estimate recall the support of the cutoff functions from Lemma~\ref{lemma:cutoff}.
We have
\begin{align*}
  |\psi_{01e,0}'| &=  |\psi_{01e}'|\lesssim \frac{|\chi|}{\langle r+t\rangle}\Bigl(|F_0'|+\langle q\rangle^{-1}|F_1'|\Bigr)+\frac{\langle q\rangle|\chi_e'|}{\langle r+t\rangle}\frac{M}{\langle q\rangle} \\
  |\psi_{01e,\,i}^\prime | &\leq |\psi_{01e}^\prime| +  \frac{|\chi|}{\langle t+r\rangle}\frac{|F_0|+\langle q\rangle^{-1}|F_1|+|\pa_{\omega^i}F_0|+\langle q\rangle^{-1}|\pa_{\omega^i}F_1|}{\langle q\rangle}+ \frac{|\chi_e|}{\langle t+r\rangle}\frac{M}{\langle q\rangle}\,.
\end{align*}

For the higher order version note in particular that
\begin{equation}
  |Z F_i(r-t,\omega)|\lesssim \sum_{k+|\alpha|=1} |(\langle q\rangle \partial_q)^k\partial_\omega^\alpha F_i|\,.
\end{equation}

\end{proof}

\subsection{Estimates for bilinear forms of the approximate solution}

Consider approximate solutions $\psi_{01e}=\psi_0+\psi_1+\psi_e$, and $\phi_{01e}=\phi_0+\phi_1+\phi_e$, associated to the radiation fields $F_i$, and $G_i$, and mass $M$, and $N$, respectively; c.f.~\eqref{eqs:psi:01e:prime}.
Recall also the norms \eqref{eq:decayassumption}, \eqref{eq:decayassumptioninfty} on the radiation fields introduced in Section~\ref{sec:scattering}.

\begin{lemma} \label{lem:Blemma} Let $B$ be a bilinear form. Then
\begin{multline}
  \big\|\langle t+r\rangle^s \big|B(\pa \psi_{01e},\pa \phi_{01e})-B(\psi_{01e}^\prime,\phi_{01e}^{\,\prime})\big|(t,\cdot)\big\|_{\mathrm{L}^2(\Sigma_t)}\lesssim\\
  \lesssim \frac{1}{\langle t\rangle^{5/2-s}}\sum_{i,j=0}^1 \Bigl(\|F_i\|_{0,-1/2-i}\bigl(\|G_j\|_{1,\infty,-j}+N\bigr)+\bigl(\|F_i\|_{1,\infty,-i}+M\bigr)\|G_j\|_{0,-1/2-j}\Bigr)\,.
\end{multline}
\end{lemma}

\begin{proof}
  Since $B$ is bilinear, $B(\xi,\zeta)=B^{\alpha\beta}\xi_\alpha\zeta_\beta$ for some coefficients $B^{\alpha\beta}$, we have
  \begin{equation*}
    \begin{split}
      B(\pa \psi_{01e},\pa \phi_{01e})-B(\psi_{01e}^\prime,\phi_{01e}^{\,\prime}) &= B^{\alpha\beta} \Bigl( \partial_\alpha \psi_{01e}\partial_\beta \phi_{01e} - \psi_{01e,\alpha}'\phi_{01e,\beta}'\Bigr)\\
      &\leq |B^{\alpha\beta}| \bigl\lvert  \partial_\alpha \psi_{01e} -\psi_{01e,\alpha}' \bigr\rvert |\partial_\beta \phi_{01e}|+|B^{\alpha\beta}||\psi_{01e,\alpha}'|\bigl\lvert \partial_\beta \phi_{01e}-\phi_{01e,\beta}' \bigr\rvert
    \end{split}
  \end{equation*}
  and thus the following inequality follows from Lemma~\ref{lemma:psi:01e:prime}:
  \begin{multline}\label{eq:box:w:rhs:second}
| B(\pa \psi_{01e},\pa \phi_{01e})-B(\psi_{01e}^\prime,\phi_{01e}^{\,\prime})|
\lesssim\frac{\big|\chi\!\big(\tfrac{\langle q\rangle}{r}\big)\big|}{\langle\, t\!+\!r\rangle^4} \biggl[ \sum_{i=0,1}\!\frac{\!|F_{\!i}|\!\!}{\langle q\rangle^{i}}\Bigl(\sum_{j=0,1}\!\frac{\!|G_{\!j}|\!\!+\!|\pa_{\omega}G_{\!j}|\!\!+\!\langle q\rangle|G_{\!j}^{\,\prime}|\!}{\langle q\rangle^{j}}+N\Bigr)\\+\sum_{i=0,1}\!\frac{\!|G_{\!i}|\!\!}{\langle q\rangle^{i}}\Bigl(\sum_{j=0,1}\!\frac{\!|F_{\!j}|\!\!+\!|\pa_{\omega}F_{\!j}|\!\!+\!\langle q\rangle|F_{\!j}^{\,\prime}|\!}{\langle q\rangle^{j}}+M\Bigr) \biggr]
\end{multline}

  Therefore
  \begin{align*}
       \big\|\langle t+r\rangle^s& \big|B(\pa \psi_{01e},\pa \phi_{01e})-B(\psi_{01e}^\prime,\phi_{01e}^{\,\prime})\big|(t,\cdot)\big\|^2_{\mathrm{L}^2(\Sigma_t)}\lesssim\displaybreak[0]\\
                                 &\lesssim \frac{1}{\langle t\rangle^{5-2s}}\int_\mathbb{R} \int_{\mathbb{S}^2} \frac{1}{\langle q\rangle} \sum_{i=0}^1\langle q\rangle^{-2i}|F_i|^2\Bigl(\sum_{j=0}^1\langle q\rangle^{-2j}\sum_{|\beta|+l\leq 1}|(\langle q\rangle\partial_q)^l\partial_\omega^\beta G_j|^2+N^2\Bigr)\ud q \ud S(\omega)\\
        &\qquad+ \frac{1}{\langle t\rangle^{5-2s}}\int_\mathbb{R} \int_{\mathbb{S}^2} \frac{1}{\langle q\rangle} \sum_{i=0}^1\langle q\rangle^{-2i}|G_i|^2\Bigl(\sum_{j=0}^1\langle q\rangle^{-2j}\sum_{|\beta|+l\leq 1}|(\langle q\rangle\partial_q)^l\partial_\omega^\beta F_j|^2+M^2\Bigr)\ud q \ud S(\omega)\\
    &\lesssim \frac{1}{\langle t\rangle^{5-2s}}\sum_{i,j=0}^1 \Bigl(\bigl(\|G_j\|^2_{1,\infty,-j}+N^2\bigr)\|F_i\|^2_{0,-1/2-i}
    +\bigl(\|F_i\|^2_{1,\infty,-i}+M^2\bigr)\|G_j\|^2_{0,-1/2-j}\Bigr)\,.
    \tag*{\qedhere}
\end{align*}
\end{proof}

The bilinear forms $Q(\partial\psi,\partial\phi)$ appearing in \eqref{eq:nullcondition:system} satisfy the null condition. Recall that all classical null forms $Q(\psi',\phi')$ are linear combinations of
\begin{equation}
 Q_{\alpha\beta}(\psi^\prime,\phi^{\,\prime})=\psi^\prime_\alpha \phi^{\,\prime}_\beta-\psi^\prime_\beta\phi^{\,\prime}_\alpha \quad (\alpha\neq \beta)\,, \qquad Q_{00}(\psi^\prime\!,\phi^{\,\prime})=m^{\alpha\beta}\psi^\prime_\alpha \phi^{\,\prime}_\beta\,.
\end{equation}
For $\psi'=\psi_{01e}'$ as defined in \eqref{eqs:psi:01e:prime} we view $\psi'$ as vector with components $\psi'_\alpha=\psi_{01e,\alpha}$ as defined in \eqref{eqs:psi:01e:prime:components}.
For the following algebraic identities note that by \eqref{eq:xi:omegai}
\begin{equation}
  \Omega_{ij}:=x^i\pa_{x^j}-x^j\pa_{x^i}=\omega_i\pa_{\omega^j}-\omega_{\!j\,}\pa_{\omega^i}  \,.
\end{equation}

\begin{lemma} Let $(\psi^\prime_{01e})_{\alpha}=\psi^\prime_{01e,\alpha}$, and $(\phi^\prime_{01e})_{\alpha}=\phi^\prime_{01e,\alpha}$. Then
  \begin{align}
Q_{ij}(\psi^{\prime}_{01e},\phi^{\,\prime}_{01e})
=&\frac{\Omega_{ij}\phi_{01}}{r}
\big(\psi_{01e}^\prime\!
  -\frac{\psi_{0e}\!+2\psi_1\!}{r}\big)
  -\frac{\Omega_{ij}\psi_{01}}{r}
\big(\phi_{01e}^{\,\prime}\!
  -\frac{\phi_{0e}\!+2\phi_1\!}{r}\big)\\
  &+\frac{\pa_{\omega_i}\psi_{01}\,\pa_{\omega_{\!j\,}}\phi_{01}
  -\pa_{\omega_i}\phi_{01}\,\pa_{\omega_{\!j\,}}\psi_{01}}{r^2}\\
Q_{0j}(\psi^{\prime}_{01e},\phi^{\,\prime}_{01e})=& \frac{\omega_j}{r}\big( \psi_{01e}^\prime(\phi_{0e}\!+2\phi_1)
- \phi_{01e}^{\,\prime}(\psi_{0e}\!+2\psi_1)  \big)
+\frac{1}{r}\big( \psi_{01e}^\prime\pa_{\omega^j}\phi_{01}
- \phi_{01e}^{\,\prime}\pa_{\omega^j}\psi_{01}\big)\,,\\
Q_{00}(\psi^\prime_{01e},\phi^{\,\prime}_{01e})=& -\frac{1}{r}\big( \psi_{01e}^\prime(\phi_{0e}\!+2\phi_1)
+ \phi_{01e}^{\,\prime}(\psi_{0e}\!+2\psi_1)  \big)\\
&+\frac{1}{r^2} \big((\psi_{0e}\!+\!2\psi_1)(\phi_{0e}\!+\!2\phi_1)
+\delta^{ij}\pa_{\omega^i}\psi_{01}\,\,\pa_{\omega^j}\phi_{01}\big).
  \end{align}

\end{lemma}
\begin{proof}
  By direct calculation using the expressions in  \eqref{eqs:psi:01e:prime:components}.
\end{proof}

Motivated by these formulas, let us define
\begin{subequations}
 \begin{align}
 Q_{ij}^0(\psi_{0e},\psi^\prime_{0e},\pa_{\omega}\psi_{0e},\phi_{0e},\phi^{\,\prime}_{0e},\pa_\omega\phi_{0e})
   &:=\frac{1}{r}\big( \psi_{0e}^\prime \Omega_{ij} \phi_0-  \phi_{0e}^\prime \Omega_{ij} \psi_0 \big) \label{eq:Q:ij}\\
   Q_{0j}^0(\psi_{0e},\psi^\prime_{0e},\pa_{\omega}\psi_{0e},\phi_{0e},\phi^{\,\prime}_{0e},\pa_\omega\phi_{0e})&:= \frac{1}{r}\big( \psi_{0e}^\prime \big(\omega_{\!j\,}\phi_{0e}+\pa_{\omega^j}\phi_{0}\big)- \phi_{0e}^{\,\prime}(\omega_{\!j\,}\psi_{0e}+\pa_{\omega^j}\psi_{0}) \big)\label{eq:Q:0j}\\
   Q_{00}^0(\psi_{0e},\psi^\prime_{0e},\pa_{\omega}\psi_{0e},\phi_{0e},\phi^{\,\prime}_{0e},\pa_\omega\phi_{0e})&:=- \frac{1}{r} \big( \psi_{0e}^{\,\prime} \phi_{0e}+ \phi_{0e}^{\,\prime} \psi_{0e} \big)+\frac{1}{r^2}\psi_e\phi_e\label{eq:Q:00}
 \end{align}
\end{subequations}
Note that only the leading order approximation is taken into account in $Q^0$, while all lower order terms in $Q$ are estimated as follows:

\begin{lemma}\label{lem:Qlemma}
  We have for all $\mu,\nu=0,1,2,3$,
\begin{multline}
 \big\|\langle t+r\rangle^s \big| Q_{\mu\nu}(\psi_{01e}^\prime,\phi_{01e}^\prime)
 -Q_{\mu\nu}^0(\psi_{0e},\psi^\prime_{0e},\pa_{\omega}\psi_{0e},\phi_{0e},\phi^{\,\prime}_{0e},
\pa_\omega\phi_{0e})\big|(t,\cdot)\big\|_{L^2_x(\Sigma_t)}\\
\lesssim \frac{1}{\langle\, t\rangle^{5/2-s}}\sum_{i,j=0,1}\Big(\|F_i\|_{1,-1/2-i}\big(\|G_j\|_{1,\infty,-j}+N\big)
+\big(\|F_i\|_{1,\infty,-i}+M\big)\|G_j\|_{1,-1/2-j}\Big).
\end{multline}
\end{lemma}

\begin{proof}
  Consider the component $\mu=\nu=0$. We have
  \begin{equation*}
    \begin{split}
      Q_{00}-Q_{00}^0 =& -\frac{1}{r}\psi_1'\bigl(\phi_{0e}+2\phi_1)-\frac{1}{r}\phi_1'\bigl(\psi_{0e}+2\psi_1\bigr)
       -\frac{1}{r}\bigl(\psi_{01e}'2\phi_1+\phi_{01e}'2\psi_1\bigr)\\
       & +\frac{1}{r^2} \big((\psi_{0e}\!+\!2\psi_1)(\phi_{0e}\!+\!2\phi_1)+\delta^{ij}\pa_{\omega^i}\psi_{01}\,\,\pa_{\omega^j}\phi_{01}\big)
       -\frac{1}{r^2}\psi_e\phi_e
    \end{split}
  \end{equation*}
  and in view of \eqref{eq:improvedlinearexpansionpsi01}, \eqref{eq:xi:omegai}, \eqref{eq:psi:01:omega}, and and Lemma~\ref{lemma:cutoff}, \ref{lemma:psi:01e:prime}:
\begin{multline}\label{eq:box:w:rhs:second}
\big|  Q_{00}(\psi_{01e}^\prime,\phi_{01e}^{\,\prime})
-Q_{00}^0(\psi_{0e},\psi^\prime_{0e},\pa_{\omega}\psi_{0e},\phi_{0e},\phi^{\,\prime}_{0e},
\pa_\omega\phi_{0e})\big|\lesssim\\
\lesssim          \frac{\big|\chi\big(\tfrac{\langle q\rangle}{r}\big)\big|}{\langle\, t+r\rangle^4} \sum_{i,j=0}^1\Bigl( {\langle q\rangle^{-i}} \sum_{\alpha+k\leq 1}|(\langle q\rangle\partial_q)^k\partial_\omega^\alpha F_{\!i}(q,\omega)| \Bigr)\Bigl( {\langle q\rangle^{-j}} \sum_{\alpha+k\leq 1}|(\langle q\rangle\partial_q)^k\partial_\omega^\alpha G_{\!j}(q,\omega)| \Bigr)\\
  + \frac{\big|\chi\big(\tfrac{\langle q\rangle}{r}\big)\chi_e\big|}{\langle\, t+r\rangle^4}\Bigl[M\sum_{j=0,1}{\langle q\rangle^{-j}} \sum_{k\leq 1}|(\langle q\rangle\partial_q)^k G_{\!j}(q,\omega)|+N\sum_{i=0,1}{\langle q\rangle^{-i}} \sum_{k\leq 1}|(\langle q\rangle\partial_q)^k F_{\!i}(q,\omega)|\Bigr]
\end{multline}
The statement of the Lemma then follows as in the proof of Lemma~\ref{lem:Blemma}.

Similarly, for the components $\mu=i,\nu=j;i,j=1,2,3$,
\begin{equation*}
  \begin{split}
    Q_{ij}-Q_{ij}^0 = &\frac{\Omega_{ij}\phi_{1}}{r}\big(\psi_{01e}^\prime\!  -\frac{\psi_{0e}\!+2\psi_1\!}{r}\big)  -\frac{\Omega_{ij}\psi_{1}}{r}\big(\phi_{01e}^{\,\prime}\!  -\frac{\phi_{0e}\!+2\phi_1\!}{r}\big)\\
    &+\frac{\Omega_{ij}\phi_{0}}{r}\big(\psi_{1}^\prime\!  -\frac{\psi_{0e}\!+2\psi_1\!}{r}\big)  -\frac{\Omega_{ij}\psi_{0}}{r}\big(\phi_{1}^{\,\prime}\!  -\frac{\phi_{0e}\!+2\phi_1\!}{r}\big)\\
  &+\frac{\pa_{\omega_i}\psi_{01}\,\pa_{\omega_{\!j\,}}\phi_{01}  -\pa_{\omega_i}\phi_{01}\,\pa_{\omega_{\!j\,}}\psi_{01}}{r^2}
  \end{split}
\end{equation*}
and we proceed as above to obtain
\begin{equation*}
  \begin{split}
    |Q_{ij}-Q_{ij}^0|\lesssim &\frac{\chi}{\langle t+r\rangle^4} \sum_{j=0,1} \langle q\rangle^{-j}|\partial_\omega G_j|\Bigl(\sum_{i=0,1}\langle q\rangle^{-i}\sum_{k+|\alpha|\leq 1}(\langle q\rangle\partial_q)^k \partial_\omega^\alpha F_i+M\Bigr)\\
    &+\frac{\chi}{\langle t+r\rangle^4} \sum_{i=0,1} \langle q\rangle^{-i}|\partial_\omega F_j|\Bigl(\sum_{j=0,1}\langle q\rangle^{-j}\sum_{k+|\alpha|\leq 1}(\langle q\rangle\partial_q)^k \partial_\omega^\alpha G_j+N\Bigr)
  \end{split}
\end{equation*}
Finally, in the case $\mu=0,\nu=j=1,2,3$,
\begin{equation*}
  \begin{split}
    Q_{0j}-Q_{0j}^0=& \frac{1}{r}\big( \psi_{1}^\prime(\omega_j\phi_{0e}\!+2\omega_j\phi_1+\partial_\omega\phi_{01})
    - \phi_{1}^{\,\prime}(\omega_j\psi_{0e}\!+2\omega_j\psi_1+\partial_\omega\psi_{01})  \big)\\
    &+\frac{1}{r}\big( \psi_{01e}^\prime(2\omega_j\phi_1+\partial_{\omega^j}\phi_1)- \phi_{01e}^{\,\prime}(2\omega_j\psi_1+\partial_{\omega^j}\psi_1)  \big)
  \end{split}
\end{equation*}
and thus
\begin{equation*}
  \begin{split}
  |Q_{0j}-Q_{0j}^0| \lesssim \frac{\chi}{\langle t+r\rangle^4}\Bigl[&\sum_{l+|\alpha|\leq 1}\frac{|(\langle q\rangle \partial_q)^l\partial_\omega^\alpha F_1|}{\langle q\rangle}\bigl(\sum_{\substack{j=0,1\\|\alpha|+k\leq 1}}\langle q\rangle^{-j}\partial_\omega^\alpha(\langle q\rangle\partial_q)^k G_j+N\bigr)\\&+\sum_{l+|\alpha|\leq 1}\frac{|(\langle q\rangle\partial_q)^l\partial_\omega^\alpha G_1|}{\langle q\rangle}\bigl(\sum_{\substack{i=0,1\\|\alpha|+k\leq 1}}\langle q\rangle^{-i}\partial_\omega^\alpha(\langle q\rangle\partial_q)^k F_i+M\bigr)\Bigr]
\end{split}
\end{equation*}
and the statement follows.
\end{proof}

\subsection{The second order asymptotics for a system satisfying the classical null condition}

In Section~\ref{sec:asymptotics:second:approx} we have imposed an equation on the radiation field of the approximate solution to obtain a good second order approximation; c.f.~\eqref{eq:F1homogeneouscondition} and Lemma~\ref{lem:boxapprox}.

Similarly here we want to find $F_1$ and $G_1$ such that modulo terms of order $\mathcal{O}(r^{-4})$
\begin{equation}\label{eq:nullcondrevisitedapprox}
\Box \psi_{01e} \sim Q(\pa \psi_{01e},\pa \phi_{01e}),\qquad\Box\phi_{01e} \sim \widetilde{Q}(\pa \psi_{01e},\pa \phi_{01e}).
\end{equation}
Let us set
\begin{subequations}
\begin{align}
2F_1^{\,\prime}(q,\omega)\!&= \triangle_\omega F_0(q,\omega)
-Q^0(F_0,F_0^\prime,\pa_\omega F_0,G_0,G_0^\prime,\pa_\omega G_0),\quad F_1(0,\omega)=0\label{eq:asymptoticnullcond1}\\
2G_1^{\,\prime}(q,\omega)\!&= \triangle_\omega G_0(q,\omega)-\widetilde{Q}^0(F_0,F_0^\prime,\pa_\omega F_0,G_0,G_0^\prime,\pa_\omega G_0)\,,\quad G_1(0,\omega)=0
\label{eq:asymptoticnullcond2}
\end{align}
\end{subequations}
where $Q^0$ as a bilinear form in the radiation fields $F_0$, and $G_0$ is defined in complete analogy to \eqref{eq:Q:ij}, \eqref{eq:Q:0j}, \eqref{eq:Q:00}:
\begin{subequations}
\begin{align}
  Q^0_{00}(F_0,G_0):=& \bigl(F_0'\chi+M\chi_e'\bigr)\Omega_{ij}G_0-\bigl(G_0'\chi+N\chi_e'\bigr)\Omega_{ij}F_0\label{eq:Q:00:FG}\\
  Q^0_{0j}(F_0,G_0):=& \bigl( F_0'\chi+M\chi_e'\bigr)\bigl(\omega_jG_0\chi+\omega_j N\chi_e+\partial_{\omega^j}G_0\chi\bigr)\\&-\bigl(G_0'\chi+N\chi_e'\bigr)\bigl(\omega_jF_0\chi+\omega_j M\chi_e+\partial_{\omega^j}F_0\chi\bigr)\label{eq:Q:0j:FG} \\
  Q^0_{ij}(F_0,G_0):=& -\bigl(F_0'\chi+M\chi_e'\bigr)\bigl(G_0\chi+N\chi_e\bigr)-\bigl(G_0'\chi+N\chi_e'\bigr)\bigl(F_0+M\chi_e\bigr)\label{eq:Q:ij:FG}
\end{align}
\end{subequations}

The following proposition shows that $\psi_{01e}$, $\phi_{01e}$ with $F_1$, $G_1$, thus defined are indeed a good approximate solution to the system, in the weighted $L^2$ norm required  by the conformal energy estimate:

\begin{prop}\label{prop:approximatesolution} Suppose that $F_1$ and $G_1$ satisfy \eqref{eq:asymptoticnullcond1}, \eqref{eq:asymptoticnullcond2}.
Then
\begin{multline}
 \big\|\langle t+r\rangle^s \big| \Box\psi_{01e}-Q(\pa \psi_{01e},\pa \phi_{01e})\big|(t,\cdot)\big\|_{L^2_x}
 +\big\|\langle t+r\rangle^s \big| \Box\phi_{01e}-\widetilde{Q}(\pa \psi_{01e},\pa \phi_{01e})\big|(t,\cdot)\big\|_{L^2_x}\lesssim\\
\lesssim \frac{1}{\langle\, t\rangle^{3/2+\gamma-s}}\sum_{i,j=0,1}\Big(MN+\|F_i\|_{2,\gamma-1/2-i}\big(1+N+\|G_j\|_{1,\infty,\gamma-j}\big)\\
+\|G_j\|_{2,\gamma-1/2-j}\big(1+M+\|F_i\|_{1,\infty,\gamma-i}\big)\Big).
\end{multline}

\end{prop}

\begin{proof}
  Since with our choice of $F_1$, and $G_1$,
  \begin{align*}
    \triangle_\omega \psi_0/r^2-2\psi_1^\prime/r &= \frac{1}{r^3}Q^0(F_0,G_0)\chi\big(\tfrac{\langle t-r\rangle}{r}\big)\\
    \triangle_\omega \phi_0/r^2-2\phi_1^\prime/r &= \frac{1}{r^3}\widetilde{Q}^0(F_0,G_0)\chi\big(\tfrac{\langle t-r\rangle}{r}\big)
  \end{align*}
  we get by Lemma~\ref{lemma:approx:error},
  \begin{equation*}
    \begin{split}
      \lvert \Box \psi_{01e}-Q^0(\psi_{0e},\phi_{0e})  \rvert \lesssim& \bigl\lvert \frac{1}{r^3}Q^0(F_0,G_0)\chi-Q^0(\psi_{0e},\phi_{0e})\rvert\\
      &+ \frac{\chi}{\langle\, t+r\rangle^4} \sum_{|\alpha|+k\leq 2,\, i=0,1} \langle\, q\rangle^{1-i}\big|(\langle\, q\rangle\pa_q)^k\pa_\omega^\alpha F_i(q,\omega)\big|\,.
    \end{split}
  \end{equation*}
  where
  \begin{equation*}
    \bigl\lvert \frac{1}{r^3}Q^0(F_0,G_0)\chi-Q^0(\psi_{0e},\phi_{0e})\rvert \lesssim \frac{1}{\langle t+r\rangle^4}MN\chi_e
  \end{equation*}
  and hence
  \begin{multline*}
    \|\langle t+r\rangle^s \lvert \Box \psi_{01e}-Q^0(\psi_{0e},\phi_{0e})  \rvert (t,\cdot)\|_{L^2_x}^2\lesssim\\
    \lesssim \frac{1}{\langle t+r\rangle^{3-2s+2\gamma}}\int_{\mathbb{R}}\int_{\mathbb{S}^2}\Bigl[\frac{M^2N^2}{\langle q\rangle^{3-2\gamma}}+\sum_{|\alpha|+k\leq 2,\, i=0,1} \langle\, q\rangle^{-1-2i+2\gamma}\big|(\langle\, q\rangle\pa_q)^k\pa_\omega^\alpha F_i(q,\omega)\bigr|^2\Bigr]\ud q\ud S(\omega)\\
    \lesssim \langle t+r\rangle^{-3+2s-2\gamma}\Bigl(M^2N^2+\sum_{i=0,1}\|F_i\|_{2,\gamma-1/2-i}^2\Bigr)
  \end{multline*}
  Furthermore by  Lemma \ref{lem:Blemma} and  Lemma \ref{lem:Qlemma} we conclude that
  \begin{equation*}
    \begin{split}
     \|\langle t+r\rangle^s \big| \Box\psi_{01e}-Q(\pa \psi_{01e},\pa \phi_{01e})\big|\|_{L^2_x}\leq& \|\langle t+r\rangle^s \big| \Box\psi_{01e}-Q^0(\psi_{0e},\phi_{0e})\big|\|_{L^2_x}\\&+\|\langle t+r\rangle^s \big|Q^0(\psi_{0e},\phi_{0e})-Q(\psi'_{01e},\phi'_{01e})\big|\|_{L^2_x}\\&+\|\langle t+r\rangle^s \big|Q(\psi'_{01e},\phi'_{01e})-Q(\pa \psi_{01e},\pa \phi_{01e})\big|\|_{L^2_x}\\
     \lesssim \frac{1}{\langle\, t\rangle^{5/2-s}}\sum_{i,j=0,1}\Big(\|F_i\|_{1,-1/2-i}\big(\|G_j\|_{1,\infty,-j}&+N\big)+\big(\|F_i\|_{1,\infty,-i}+M\big)\|G_j\|_{1,-1/2-j}\Big)\\
     &+\frac{1}{\langle t\rangle^{3/2-s+\gamma}}\Bigl(MN+\sum_{i=0,1}\|F_i\|_{2,\gamma-1/2-i}\Bigr)
    \end{split}
  \end{equation*}
  which implies the statement of the Proposition because $1/2<\gamma<1$.
\end{proof}

Finally we control the higher order radiation fields in terms of $F_0$, $G_0$:
\begin{lemma}\label{lem:F1G1byF0G0} Suppose that $F_1$ and $G_1$ satisfy \eqref{eq:asymptoticnullcond1}, \eqref{eq:asymptoticnullcond2}. Then
\begin{multline}
\|F_1\|_{N-2,\gamma-3/2}+\|G_1\|_{N-2,\gamma-3/2}
\lesssim \|F_0\|_{N,\gamma-1/2}+\|G_0\|_{N,\gamma-1/2}\\
+\bigl(\|F_0\|_{N+1,\gamma-1/2}+M\bigr)\bigl(\|G_0\|_{N+1,\gamma-1/2}+N\bigr)
\end{multline}
\begin{multline}
\|F_1\|_{N-2,\infty,\gamma-1}+\|G_1\|_{N-2,\infty,\gamma-1}
\lesssim \|F_0\|_{N,\infty,\gamma}+\|G_0\|_{N,\infty,\gamma}\\
+\bigl(\|F_0\|_{N-1,\infty,\gamma}+M\bigr)\bigl(\|G_0\|_{N-1,\infty,\gamma}+N\bigr)
\end{multline}
\end{lemma}
\begin{proof}
  As in the proof of Lemma~\ref{lem:F1byF0} we have
  \begin{equation*}
    \|F_1\|_{0,\gamma-3/2}\lesssim \|F_1'\|_{0,\gamma-1/2}\lesssim \|F_0\|_{2,\gamma-1/2}+\|Q^0(F_0,G_0)\|_{0,\gamma-1/2}\,.
  \end{equation*}
In view of \eqref{eq:Q:00:FG}, \eqref{eq:Q:0j:FG}, and \eqref{eq:Q:ij:FG} we have
\begin{align*}
  \|Q^0_{00}(F_0,G_0)\|_{0,\gamma-1/2}\lesssim& \|G_0\|_{1,\infty,0}\bigl(\|F_0\|_{1,\gamma-3/2}+M\bigr)+\|F_0\|_{1,\infty,0}\bigl(\|G_0\|_{1,\gamma-3/2}+N\bigr)\\
  \|Q^0_{0j}(F_0,G_0)\|_{0,\gamma-1/2}\lesssim& \bigl(\|G_0\|_{1,\infty,0}+N\bigr)\bigl(\|F_0\|_{1,\gamma-3/2}+M\bigr)\\&\quad+\bigl(\|F_0\|_{1,\infty,0}+M\bigr)\bigl(\|G_0\|_{1,\gamma-3/2}+N\bigr)\\
    \|Q^0_{ij}(F_0,G_0)\|_{0,\gamma-1/2}\lesssim& \bigl(\|G_0\|_{0,\infty,0}+N\bigr)\bigl(\|F_0\|_{1,\gamma-3/2}+M\bigr)\\&\quad+\bigl(\|F_0\|_{0,\infty,0}+M\bigr)\bigl(\|G_0\|_{1,\gamma-3/2}+N\bigr)
\end{align*}
and using Lemma~\ref{lemma:F:Sobolev},
\begin{equation*}
  \|F_0\|_{0,\infty,0}\lesssim \|F_0\|_{0,\infty,\gamma} \lesssim \|F_0\|_{3,\gamma-1/2}\qquad   \|G_0\|_{0,\infty,0} \lesssim \|G_0\|_{3,\gamma-1/2}\,.
\end{equation*}
Moreover as in the proof of Lemma~\ref{lem:F1byF0},
\begin{equation*}
  \|F_1\|_{0,\infty,\gamma-1}\lesssim \|F'_1\|_{0,\infty,\gamma} \lesssim \|F_0\|_{2,\infty,\gamma}+\|Q^0(F_0,G_0)\|_{0,\infty,\gamma}
\end{equation*}
and again in view of  \eqref{eq:Q:00:FG}, \eqref{eq:Q:0j:FG}, and \eqref{eq:Q:ij:FG} we have
\begin{align*}
  \|Q_{00}^0\|_{0,\infty,\gamma} \lesssim& \|G_0\|_{1,\infty,0}\bigl(\|F_0\|_{1,\infty,\gamma-1}+M\bigr)+\|F_0\|_{1,\infty,0}\bigl(\|G_0\|_{1,\infty,\gamma-1}+N\bigr)\\
  \|Q_{0j}^0\|_{0,\infty,\gamma} \lesssim& \bigl(\|G_0\|_{1,\infty,0}+N\bigr)\bigl(\|F_0\|_{1,\infty,\gamma-1}+M\bigr)\\&\quad +\bigl(\|F_0\|_{1,\infty,0}+M\bigr)\bigl(\|G_0\|_{1,\infty,\gamma-1}+N\bigr)\\
    \|Q_{ij}^0\|_{0,\infty,\gamma} \lesssim& \bigl(\|G_0\|_{0,\infty,0}+N\bigr)\bigl(\|F_0\|_{1,\infty,\gamma-1}+M\bigr)\\&\quad +\bigl(\|F_0\|_{0,\infty,0}+M\bigr)\bigl(\|G_0\|_{1,\infty,\gamma-1}+N\bigr)\,.
\end{align*}
This proves the Lemma in the case $N=2$. Higher order estimates follow by commuting the equations  \eqref{eq:asymptoticnullcond1}, \eqref{eq:asymptoticnullcond2}.
\end{proof}

\subsection{Estimates for norms of bilinear forms}
\label{sec:norm:bilinear}

\begin{lemma}\label{lem:thebilinearestimate} Let $B$ be a bilinear form. Then for $0\leq s\leq 3/2$,
\beq
 \big\|\langle t+r\rangle^s B(\pa v,\pa w)(t,\cdot)\big\|_{L^2_x}
\lesssim \langle t\rangle^{s-1} {\sum}_{|I|\leq 2}\|\pa Z^I v(t,\cdot)\big\|_{L^2_x}\|\pa  w(t,\cdot)\|_{L^2_x}.
\eq
\end{lemma}
\begin{proof} By the Klainerman Sobolev inequality we have 
\beq
\langle t+r\rangle^s |\pa v|\lesssim  \frac{\langle t+r\rangle^s}{\langle t+r\rangle \langle t-r\rangle^{1/2}}{\sum}_{|I|\leq 2}\|\pa Z^I v(t,\cdot)\big\|_{L^2_x}
\lesssim \langle t\rangle^{s-1} {\sum}_{|I|\leq 2}\|\pa Z^I v(t,\cdot)\big\|_{L^2_x}.\tag*{\qedhere}
\eq
\end{proof}
There is an improvement for the null forms that we however do not need to use for the asymptotic expansion at this order.
\begin{lemma} Let $Q(\pa v,\pa w)$ be a null form. Then for any $s\geq 1$,
\begin{multline}
 \big\|\langle t+r\rangle^s Q(\pa v,\pa w)(t,\cdot)\big\|_{L^2_x}
\lesssim \langle t\rangle^{s-2} {\sum}_{|I|\leq 3}\|\langle t-r\rangle^{s-1} Z^I v(t,\cdot)\big\|_{L^2_x}\|\pa  w(t,\cdot)\|_{L^2_x}\\
+  \langle t\rangle^{s-2} {\sum}_{|I|\leq 2}\|\pa Z^I v(t,\cdot)\big\|_{L^2_x} {\sum}_{|J|\leq 1}\|\langle t-r\rangle^{s-1} Z^J  w(t,\cdot)\|_{L^2_x}.
\end{multline}
\end{lemma}
\begin{proof} We have
$|Q(\pa v,\pa w)|\lesssim |\,\opa v| |\pa w|+ |\,\pa v| |\opa w|\lesssim
\langle t+r\rangle^{-1} \sum_{|J|= 1}\big(|Z^J v| |\pa w|+ |\pa v| |Z^J w|\big)$.
The statement then follows from Lemma~\ref{lemma:KS:weights}.
\end{proof}

\subsection{Energy estimates for a system satisfying the classical null condition}

Let us return to the system \eqref{eq:nullcondrevisited1}-\eqref{eq:nullcondrevisited2},
which we can recast as a system for $v=\psi-\psi_{01e}$ and $u=\phi-\phi_{01e}$:
\begin{subequations}
\begin{align}
\Box v &=  Q(\pa v ,\pa u)+Q(\pa v,\pa \phi_{01e})
+ Q(\pa \psi_{01e} ,\pa u)+Q(\pa \psi_{01e} ,\pa \phi_{01e})-\Box\psi_{01e},\label{eq:nullcondsystemexpansion1}\\
 \Box u &=  \widetilde{Q}(\pa v ,\pa u)+\widetilde{Q}(\pa v,\pa \phi_{01e})
+ \widetilde{Q}(\pa \psi_{01e} ,\pa u)+\widetilde{Q}(\pa \psi_{01e} ,\pa \phi_{01e})-\Box\phi_{01e}.\label{eq:nullcondsystemexpansion1:u}
\end{align}
\end{subequations}

Indeed for any bilinear form $B$,
we see that
\begin{equation}
  \begin{split}
B(\pa \psi,\pa \phi)=&B(\pa \psi_{01e},\pa \phi_{01e})\\&+B(\pa \psi-\pa \psi_{01e} ,\pa \phi+\pa \phi_{01e})/2
+ B(\pa \psi+\pa \psi_{01e} ,\pa \phi-\pa \phi_{01e})/2\\
=&B(\pa \psi_{01e},\pa \phi_{01e})+B(\pa \psi-\pa \psi_{01e} ,\pa \phi-\pa \phi_{01e})\\
&+ B(\pa \psi_{01e} ,\pa \phi-\pa \phi_{01e})+B(\pa\psi-\pa \psi_{01e} ,\pa \phi_{01e}).
\end{split}
\end{equation}

Consider the right hand sides of \eqref{eq:nullcondsystemexpansion1}, \eqref{eq:nullcondsystemexpansion1:u}.
We will use Proposition~\ref{prop:approximatesolution} to estimate the last two terms, where it is essential that $(\psi_{01e},\phi_{01e})$ are good approximations, and that $Q$ and $\widetilde{Q}$ have null structure.
However, to estimate the remaining terms we do not need the null structure of $Q$ and will appeal to Lemma~\ref{lem:thebilinearestimate} which holds for any bilinear form.

\begin{lemma}\label{lemma:Q:v:psi}
  We have
  \begin{multline}
    \big\|\langle t+r\rangle^s Q(\pa v,\pa \phi_{01e})(t,\cdot)\big\|+\big\|\langle t+r\rangle^s Q(\pa \psi_{01e},\pa u)(t,\cdot)\big\|\lesssim\\
    \lesssim \langle t\rangle^{s-1}\biggl( \Bigl(\sum_{i=0,1} \|G_i\|_{3,-1-i} + N\Bigr)\|\pa  v(t,\cdot)\|+ \Bigl(\sum_{i=0,1} \|F_i\|_{3,-1-i} + M\Bigr)\|\pa  u(t,\cdot)\|\biggr)
  \end{multline}

\end{lemma}
\begin{proof}
  First note that from Lemma~\ref{lemma:psi:01e:prime}, in particular \eqref{eq:psi:01e:higher},  it follows that
  \begin{equation}\label{eq:aprroxsolbound}
    \|\pa Z^I \psi_{01e}(t,\cdot)\| \lesssim \sum_{i=0,1} \|F_i\|_{1+|I|,-1-i} + M
  \end{equation}
  and hence by Lemma~\ref{lem:thebilinearestimate},
  \begin{equation}
    \begin{split}
     \big\|\langle t+r\rangle^s Q(\pa \psi_{01e},\pa u)(t,\cdot)\big\|
     \lesssim& \langle t\rangle^{s-1} {\sum}_{|I|\leq 2}\|\pa Z^I \psi_{01e}(t,\cdot)\big\|\|\pa  u(t,\cdot)\|\\
     \lesssim& \langle t\rangle^{s-1} \Bigl(\sum_{i=0,1} \|F_i\|_{3,-1-i} + M\Bigr)\|\pa  u(t,\cdot)\|
\end{split}
\end{equation}
Similarly for the term $Q(\pa v,\pa \phi_{01e})$.
\end{proof}

\subsection{Proof of Proposition~\ref{prop:nullcond:revisited}}

The above already provides all the necessary estimates for the stated scattering result for systems satisfying the null condition, and we now  put these elements together to give a formal proof of Prop.~\ref{prop:nullcond:revisited}.

Instead of taking  $v_T$, and $u_T$ directly to be solutions to \eqref{eq:nullcondsystemexpansion1}, \eqref{eq:nullcondsystemexpansion1:u}, with trivial data $v_T=u_T=\partial_tv_T=\partial_t u_T=0$  at $t=T$, we introduce an additional cutoff in time to ensure that also $Z^Iu_T$, and $Z^Iv_T$ have trivial initial data at $t=T$:
\begin{subequations}
\begin{align}
\Box v_T &= \chi(\tfrac{t}{T})\bigl( Q(\pa v_T ,\pa u_T)+Q(\pa v_T,\pa \phi_{01e})
+ Q(\pa \psi_{01e} ,\pa u_T)+Q(\pa \psi_{01e} ,\pa \phi_{01e})-\Box\psi_{01e}\bigr),\label{eq:Box:v:nullcondsystem:chi}\\
 \Box u_T &=  \chi(\tfrac{t}{T})\bigl(\widetilde{Q}(\pa v_T ,\pa u_T)+\widetilde{Q}(\pa v_T,\pa \phi_{01e})
+ \widetilde{Q}(\pa \psi_{01e} ,\pa u_T)+\widetilde{Q}(\pa \psi_{01e} ,\pa \phi_{01e})-\Box\phi_{01e}\Bigr).\label{eq:Box:u:nullcondsystem:chi}
\end{align}
\end{subequations}

\subsubsection{Energy estimates}

Similarly to the proof of Prop.~\ref{prop:wave:null} we will assume the following \emph{a priori} estimate, which we first verify for $t=T$, and then establish by a continuity argument in $t\leq T$:

For some $\delta>0$, and $C_0>0$,
\begin{equation}\label{eq:vwapriori}
\sum_{|I|\leq 5}\|\pa Z^I v_T(t,\cdot)\|+\|\pa Z^I u_T(t,\cdot)\|\leq \frac{C_0\varepsilon}{\langle t\rangle^{\delta}}\,.
\end{equation}

At $t=T$ the left hand side vanishes for $|I|=0$, and also for $|I|\geq 1$ by Lemma~\ref{lemma:Z:v}.

First consider the case $|I|=0$.
We use the energy identity,
\begin{equation}
E(t):=\|\pa v_T(t,\cdot)\|+\|\pa u_T(t,\cdot)\|\leq \int_t^T \|\Box v_T\|+\|\Box u_T\| \ud t'
\end{equation}
and observe that the first term in \eqref{eq:nullcondsystemexpansion1}, \eqref{eq:nullcondsystemexpansion1:u}, can be estimated using the \emph{a priori} bound and Lemma~\ref{lem:thebilinearestimate},
\begin{equation}
 \big\|\langle t+r\rangle^s Q(\pa v_T,\pa u_T)(t,\cdot)\big\|
\lesssim  C_0 \varepsilon\, \langle t\rangle^{s-1-\delta} \|\pa  u_T(t,\cdot)\|
\end{equation}
For the second and third term we apply Lemma~\ref{lemma:Q:v:psi}, and for the last two terms Prop.~\ref{prop:approximatesolution};
in the case $s=0$ the above then implies
\begin{equation}\label{eq:nullcond:gronwall}
\|\pa v_T(t,\cdot)\|+\|\pa u_T(t,\cdot)\|\lesssim \int_t^T  \frac{C_0 \varepsilon \langle t\rangle^{-\delta}+\tilde{D}}{\langle t\rangle}\Bigl( \|\pa  v_T(t,\cdot)\|+ \|\pa  u_T(t,\cdot)\|\Bigr)+\frac{\tilde{D}+\tilde{D}^2}{\langle t\rangle^{3/2+\gamma}}\ud t
\end{equation}
where the scattering data $\tilde{D}:=\tilde{D}_0$, is given by
\begin{equation}
 \quad \tilde{D}_k:=\sum_{i=0,1} \Big( \|F_i\|_{3+k,\gamma-1/2-i} +\|F_i\|_{1+k,\infty,\gamma-i}+ M+ \|G_i\|_{3+k,\gamma-1/2-i} +\|G_j\|_{1+k,\infty,\gamma-j}+ N \Big)\,.
\end{equation}
Note that by Lemma~\ref{lem:F1G1byF0G0}, and Lemma~\ref{lemma:F:Sobolev}, $  \tilde{D}\lesssim D_0(1+D_0) $.
In view of the smallness assumption on the data $D_0<\varepsilon$, we then arrive at
\begin{equation}\label{eq:nullcond:energyineq}
E(t)\leq C\int_t^T\frac{(C_0+1)\varepsilon}{ \tau}E(\tau)\ud \tau+\frac{C \varepsilon}{t^{1/2+\gamma}}\,,
\end{equation}
which yields by Gr\"onwall ---  namely by setting $F(t)=\int_t^T(C_0+1)\epsilon\tau^{-1}E(\tau)\ud \tau$, and differentiating $t^\alpha F(t)$, where $\alpha=C(C_0+1)\varepsilon$ --- the estimate
\begin{equation}\label{eq:nullcond:system:L2:v}
  E(t)\leq \Bigl(\tfrac{(C_0+1)C\varepsilon}{1/2+\gamma-\alpha}+1\Bigr)\frac{C\varepsilon}{\langle t\rangle^{1/2+\gamma}}\,,
\end{equation}
which improves \eqref{eq:vwapriori} with  $C_0$ chosen sufficiently large, and $\varepsilon>0$ sufficiently small.

In fact, as in the proof of Prop.~\ref{prop:scattering:homogeneous}, we can apply the conformal energy estimate of Prop.~\ref{prop:frac:morawetz} to obtain:
\begin{equation}
  \|v_T(t,\cdot)\|_{1,s-1}  +\|u_T(t,\cdot)\|_{1,s-1}\lesssim \int_t^T\|\langle t+r\rangle^s\Box v_T(t,\cdot)\|+\|\langle t+r\rangle^s\Box u_T(t,\cdot)\|\ud t\,,
\end{equation}
and as above, but now with $0<s<\gamma+1/2$, we use Lemma~\ref{lem:thebilinearestimate}, Lemma~\ref{lemma:Q:v:psi}, and Prop.~\ref{prop:approximatesolution} to infer that
\begin{multline}\label{eq:nullcond:gronwall:s}
  \int_t^T\|\langle t+r\rangle^s\Box v_T(t',\cdot)\|+\|\langle t+r\rangle^s\Box u_T(t',\cdot)\|\ud t' \lesssim \\
  \lesssim \int_t^T \bigl(C_0\varepsilon\langle t'\rangle^{-\delta}+\tilde{D}\bigr) \langle t'\rangle^{s-1} E(t') + \langle t'\rangle^{s-3/2-\gamma} (\tilde{D}+\tilde{D}^2\bigr)\ud t'
  \lesssim \frac{\varepsilon}{\langle t\rangle^{1/2+\gamma-s}}
\end{multline}
where in the last step we have used \eqref{eq:nullcond:system:L2:v}.

Let us now turn to the higher orders, namely the case $1\leq |I| \leq 5$.
 $Z^Iv_T$ satisfies
 \begin{align}\label{eq:Z:v:T:system}
   &\\
  \Box Z^Iv_T =& \sum_{|J|+|K|\leq |I|}c^{IJ}_K Z^K\chi(\tfrac{t}{T})F_J+\sum_{|J|+|K|\leq |I|}c^{I}_{KJ}Z^K(\chi(\tfrac{t}{T}))Z^J\Bigl(Q(\partial\psi_{01e},\partial\phi_{01e})-\Box\psi_{01e}\Bigr)\\
   \Box Z^Iu_T =& \sum_{|J|+|K|\leq |I|}c^{IJ}_K Z^K\chi(\tfrac{t}{T})\widetilde{F}_J+\sum_{|J|+|K|\leq |I|}c^{I}_{KJ}Z^K(\chi(\tfrac{t}{T}))Z^J\Bigl(\widetilde{Q}(\partial\psi_{01e},\partial\phi_{01e})-\Box\phi_{01e}\Bigr)
\end{align}
where, c.f.~Lemma~\ref{lemma:F:alpha},
\begin{equation}
  F_I=\sum_{|J|+|K|\leq |I|} \Bigl[ Q_{JK}^I(\partial Z^J v_T, \partial Z^K u_T) + Q_{JK}^I(\partial Z^J v_T, \partial Z^K \phi_{01e})
  +Q_{JK}^I(\partial Z^J \psi_{01e}, \partial Z^K u_T) \Bigr]
\end{equation}
and analogously for $\widetilde{F}_I$.;  ($Q_{JK}^I$ are null forms, but for the following estimates their null structure is not necessary).
From Lemma~\ref{lem:thebilinearestimate}, Lemma~\ref{lemma:Q:v:psi} (see also \eqref{eq:aprroxsolbound}), and the a priori bound \eqref{eq:vwapriori} we have
\begin{equation}
\| \langle t+r\rangle^s F_I \|_{L^2_x}+\| \langle t+r\rangle^s \widetilde{F}_I \|_{L^2_x} \lesssim  \langle t\rangle^{s-1}\Bigl(C_0 \varepsilon \langle t\rangle^{-\delta}+\tilde{D}_{3+|I|}\Bigr)\sum_{|J|\leq |I|} \Bigl(\|\partial Z^J v_T\|+\|\partial Z^J u_T\|\Bigr)
\end{equation}
and by Prop.~\ref{prop:approximatesolution} :
\begin{equation}
 \sum_{|J|\leq |I|}\big\|\langle t+r\rangle^s Z^J \big( \Box\psi_{01e}-Q(\pa \psi_{01e},\pa \phi_{01e})\big)(t,\cdot)\big\|_{2}
\lesssim \frac{D_{3+|I|}(1+D_{3+|I|})}{\langle\, t\rangle^{3/2+\gamma-s}}\,.
\end{equation}
Therefore we can apply the conformal energy estimate of Prop.~\ref{prop:frac:morawetz} to \eqref{eq:Z:v:T:system},
which in the case $s=0$ gives the analogue of \eqref{eq:nullcond:gronwall},
hence \eqref{eq:nullcond:energyineq} holds for $E_k(t)$, $k\leq 5$, where
\begin{equation}\label{eq:E:k:system:null}
  E_k(t)= \sum_{|I|\leq k} \|\pa Z^I v_T(t,\cdot)\|_2+\sum_{|I|\leq k} \| \pa Z^I u_T(t,\cdot) \|_2\,.
\end{equation}
This proves \eqref{eq:vwapriori} as shown in  \eqref{eq:nullcond:energyineq} and \eqref{eq:nullcond:system:L2:v}, and hence \eqref{prop:nullcond:system:eq:energybound} follows.
Moreover \eqref{prop:nullcond:system:eq:confenergybound} follows as in \eqref{eq:nullcond:gronwall:s} with the above bounds.

\subsubsection{Existence and uniqueness of the limit}
For the existence of the limit $(v=\lim_{T\to\infty}v_T,u=\lim_{T\to\infty}u_T)$ we proceed as in the proof of Prop.~\ref{prop:wave:null}:
Consider the differences $(w_1=v_{T_1}-v_{T_2},w_2=u_{T_1}-u_{T_2})$ for $T_2>T_1$, which satisfy a system of the form
\begin{align}
  \Box w_1 =& \chi(t/T_1)\bigl(Q(\pa w_1, \pa \phi_{01e}+ \pa u_1)+Q(\pa v_2+\pa \psi_{01e}, \pa w_2)\bigr)-(\chi(t/T_2)-\chi(t/T_1))\Box v_2\\
  \Box w_2 =& \chi(t/T_1)\bigl(\widetilde{Q}(\pa w_1, \pa \phi_{01e}+ \pa u_1)+\widetilde{Q}(\pa v_2+\pa \psi_{01e}, \pa w_2)\bigr)-(\chi(t/T_2)-\chi(t/T_1))\Box u_2\\
\end{align}
Since $(w_1,w_2)=(-v_2,-u_2)$ at $t=T_1$, and the energies \eqref{eq:E:k:system:null} for $(v_2,u_2)$ are known to be bounded by  $\langle T_1\rangle^{-1/2-\gamma}$ at $t=T_1<T_2$, the energy inequality gives
    \begin{equation}
E(t) \lesssim \varepsilon \langle T_1\rangle^{-1/2-\gamma} +\int_t^{T_1} \frac{\varepsilon}{ \tau } E(\tau) \ud\tau \,,\qquad       E(t)= \sum_{\substack{i=1,2\\|I|\leq 5}}\|\partial  Z^Iw_i (t,\cdot)\|_2
    \end{equation}
    which by Gr\"onwall yields $E(T_0)\lesssim \varepsilon \langle T_1 \rangle^{-1/2-\gamma} (T_1/T_0)^\varepsilon \to 0$ as $T_1\to\infty$, if $\varepsilon<1/2+\gamma$.
    
    Therefore the limits $\psi=\lim \psi_T$, and $\phi=\lim \phi_T$ exist, and are unique because if $F_0=G_0=0$, $M=N=0$, then $\|\psi(t,\cdot)\|+\|\phi(t,\cdot)\|\leq \|(\psi-\psi_T)(t,\cdot)\|+\|(\phi-\phi_T)(t,\cdot)\|\to 0$ as $T\to\infty$.

\section{A simple system satisfying the weak null condition}
\label{sec:weak:null:simple}

Let us first consider the following system of wave equations with weak null structure:

\begin{subequations}\label{eq:simple:weak:system}
\begin{align}
  \Box \psi &= Q(\pa \psi,\pa\psi)\label{eq:wave:weak:hom}\\
  \Box \varphi &= \bigl( \partial_t\psi \bigr)^2.
  \label{eq:wave:weak:inhom}
\end{align}
\end{subequations}

We will first construct a scattering solution to the system \eqref{eq:simple:weak:system} in the following manner:
As in Section~\ref{sec:classical:null:revisited} we have a solution to \eqref{eq:wave:weak:hom} with a given radiation field $F_0$.
Inserting the corresponding approximate solution in \eqref{eq:wave:weak:inhom}, we will define an auxillary solution $\Psi$ to the inhomogeneous problem in Section~\ref{sec:weak:forward},
and show that $\varphi-\Psi$ has a radiation field $G_0$.

\subsection{Auxiliary forward solution}
\label{sec:weak:forward}

We prescribe an asymptotic expansion to second order for \eqref{eq:wave:weak:hom} as in Section~\ref{sec:classical:null:revisited},
\beq
\psi\sim\psi_{01e}
\eq
constructed from radiation fields $F_0$, $F_1$, satisfying \eqref{eq:decayassumption}.

Now we define $\Psi_{01e}$ to be the forward solution ($t>t_0)$ of
\begin{align}\label{eq:Psidef}
  \Box\Psi_{01e}&=(\psi_{01e}^{\,\prime})^2\\
  \Psi_{01e}\rvert_{t=t_0}&=\partial_t\Psi_{01e}\rvert_{t=t_0}=0
\end{align}
where $t_0<0$ is simply chosen so that $t=t_0$ does not intersect the support of the approximate solution $\psi_{01e}'$.
Here $\psi_{01e,\,0}^{\,\prime}=-\psi_{01e}^\prime$ as in the previous section, see in particular \eqref{eq:psi:i:prime} and \eqref{eqs:psi:01e:prime:components}.

We will see below that
\beq
\Box(\varphi-\Psi_{01e})=(\pa_t \psi)^2-(\psi_{01e,\,0}^{\,\prime})^2
\eq
is decaying sufficiently fast, and $\varphi-\Psi_{01e}$  approaches a solution to the linear wave equation and has a radiation field $G_0$.
Hence  we can prescribe  an asymptotic expansion for \eqref{eq:wave:weak:inhom} of the form
\beqs
\varphi-\Psi_{01e}\sim\varphi_{01}
\eqs
where, as before,  the radiation fields $G_0$, $G_1$ for $\varphi_{01}$ satisfy the decay assumption \eqref{eq:decayassumption}.

\subsection{Weighted  energy bounds for the weak null system}

\begin{prop}\label{prop:weak:null:system}
  Let $F_0$, $G_0$ be radiation fields, $M>0$, and let $1/2<\gamma<1$.
  Then there exists a solution $(\psi=v+\psi_{01}+\psi_e,\,\varphi=w+\Psi_{01e}+\varphi_{01})$  to the system \eqref{eq:simple:weak:system}, where  $\psi_{01}$  is an approximate solution as in Section~\ref{sec:classical:null:revisited}, and $\varphi_{01}$ is an approximate homogeneous solution \eqref{eq:improvedlinearexpansionpsi01}.
  Moreover $\Psi_{01e}$ is defined in \eqref{eq:Psidef}.

  Then for any  $s<\gamma+1/2$, and  $k\geq 1$ such that the right hand side is finite, 
  \begin{gather}
\|  w(t,\cdot)\|_{k,s-1}\lesssim \frac{\|F_0\|_{k+7,\gamma-1/2}^2+M^2+\|G_0\|_{k+3,\gamma-1/2}}{\langle\, t\rangle^{1/2+\gamma-s}}\,,\\
| Z^I w(t,x)|\lesssim \frac{\|F_0\|_{|I|+9,\gamma-1/2}^2+M^2+\|G_0\|_{|I|+5,\gamma-1/2}}{\langle\, t+r\rangle\langle\, t-r\rangle^{s-1/2}}\,.
\end{gather}
Moreover for $v$ the conclusions of Proposition~\ref{prop:nullcond:revisited} hold true.

\end{prop}

We  only give the proof for $k=1$ since higher $k>1$ follow in the same way.

For any solution $\varphi=w+\Psi_{01e}+\varphi_{01}$ to \eqref{eq:wave:weak:inhom},
and $\psi=v+\psi_{01}+\psi_e$ to \eqref{eq:wave:weak:hom}, we have
\begin{equation}\label{eq:box:w}
  \begin{split}
\Box w=\Box\varphi-\Box\Psi_{01e} -\Box\varphi_{01}=&(\pa_t \psi)^2-(\pa_t \psi_{01e})^2 +\big( (\pa_t \psi_{01e})^2-(\psi_{01e}^{\,\prime})^2\big)-\Box\varphi_{01}\\
=&\big((\pa_t v)^2+2\pa_t\psi_{01e}\pa_t v\big)
+\big( ( \pa_t \psi_{01e})^2-(\psi_{01e}^{\,\prime})^2\big)-\Box\varphi_{01}
\end{split}
\end{equation}

Considering the terms on the right hand side, note first that
 by Lemma~\ref{lem:Blemma},
\begin{equation}\label{eq:box:w:rhs:second}
\big\| \langle t+r\rangle^s \big|(\pa_t \psi_{01e})^2-(\psi_{01e}^{\,\prime})^2\big|(t,\cdot)\big\|_{L^2_x}
\lesssim \frac{1}{\langle\, t\rangle^{3/2+\gamma-s}}\sum_{i,j=0,1}\Bigl(\|F_i\|_{1,\infty,\gamma-i}+M\Bigr)\|F_j\|_{0,\gamma-1/2-j}.
\end{equation}

Moreover by Lemma~\ref{lem:boxapprox}, c.f.~\eqref{eq:L:box:v}, we have the homogeneous estimate for the last term on the r.h.s.~of \eqref{eq:box:w},
\begin{equation}
\|\langle t+r\rangle^s\Box\varphi_{01}(t,\cdot)\|_{L^2_x}
\lesssim \frac{\|G_0\|_{4,\gamma-1/2}}{\langle\, t\rangle^{3/2+\gamma-s}}\,.
\end{equation}
Furthermore for the first term on the r.h.s.~of \eqref{eq:box:w} we can apply Lemma \ref{lem:thebilinearestimate}  to infer that
\begin{equation}
 \big\|\langle t+r\rangle^s \big((\pa_t v)^2\!+|\pa_t\psi_{01}\pa_t v|\big)(t,\cdot)\big\|_{L^2_x}
\!\lesssim \langle t\rangle^{s-1}\!\! \sum_{|I|\leq 2}\!
\big(\|\pa Z^I v(t,\cdot)\big\|_{L^2_x}\!+\|\pa Z^I \psi_{01e}(t,\cdot)\big\|_{L^2_x}\big)\|\pa  v(t,\cdot)\|_{L^2_x}.
\end{equation}

For the solution to \eqref{eq:wave:weak:hom}, with $\psi=v+\psi_{01e}$ we can now appeal directly to the results of Section~\ref{sec:classical:null:revisited}, see Prop.~\ref{prop:nullcond:revisited}, and also Prop.~\ref{prop:normbyenergy}, and Lemma~\ref{lem:approx:flux} to obtain that
\begin{equation}
\sum_{|I|\leq 2}\bigl(\|\pa Z^I v(t,\cdot)\|+\|\pa Z^I \psi_{01e}(t,\cdot)\|\bigr)\lesssim D\qquad D:=\|F_0\|_{8,\gamma-1/2}+M
\end{equation}
and hence
\begin{equation}
  \big\|\langle t+r\rangle^s \big((\pa_t v)^2\!+|\pa_t\psi_{01}\pa_t v|\big)(t,\cdot)\big\|_{L^2_x}\lesssim  \frac{D^2}{\langle\, t\rangle^{3/2+\gamma-s}}\,.
\end{equation}


In conclusion,
\begin{equation}
\int_{t}^{T}\big\|\langle t+r\rangle^s |\Box w(t,\cdot)|\big\|_{L^2_x}  \ud t
\lesssim \frac{\|F_0\|_{8,\gamma-1/2}^2+\|G_0\|_{4,\gamma-1/2}+M^2}{\langle\, t\rangle^{1/2+\gamma-s}},\quad\text{if}\quad
0\leq s<1/2+\gamma .
\end{equation}

Now consider a sequence of solutions $w_T$ to \eqref{eq:box:w} with trivial data at $t=T$. Then it follows by Proposition~\ref{prop:frac:morawetz} that
\beq
\|w_T(t,\cdot)\|_{1,+,s-1}\lesssim \frac{\|G_0\|_{4,\gamma-1/2}+D^2}{\langle\, t\rangle^{1/2+\gamma-s}},\quad\text{if}\quad
1\leq s<\gamma+1/2\,,
\eq
and from the energy identity,
\beq
\|\pa w_T(t,\cdot)\|
\lesssim \frac{\|G_0\|_{4,\gamma-1/2}+D^2}{\langle\, t\rangle^{1/2+\gamma}}\,.
\eq
This implies the existence of the limit $\varphi=\lim\varphi_T$, $\varphi_T=w_T+\Psi_{01e}+\varphi_{01}$ as in Section~\ref{sec:existence:limit}.

Finally as in the linear case we get by Lemma~\ref{lemma:KS:weights} the pointwise decay estimates.

\subsection{Existence of the limit as $T\to \infty$} \label{sec:existence:limit}
We can  reduce the argument to the linear homogeneous case as in Section~\ref{sec:existence:homogeneous}.
First in view of Section~\ref{sec:classical:null:revisited} the limit of $v_T$ and hence $\psi_T$ exist as $T\to \infty$, where $T$ is the time when data for $v_T$ vanish. Then one lets $\psi$ be the fixed limiting solution and with $\psi$ given solve the equation for $\varphi$ with data for $\varphi$ at $T^\prime$ and hence data for $w$ vanishing at $T^\prime$. As in previous section let $T^\prime_2>T^\prime_1$ and let $\varphi_i $ and $w_i$
be the corresponding solutions and let $w=w_2-w_1$. Then again $w=w_2$ when $t=T_1^\prime$ and
$\Box w=0$. The rest of the argument is identical to Section~\ref{sec:existence:homogeneous}.

\subsection{Asymptotics of the forward solution with sources}\label{sec:sources}

In this section we discuss estimates and formulas for the auxiliary solution of \eqref{eq:Psidef}.

For $k=2,3,4$ let
$\Phi^k[n]$ be the solution of 
\beq
-\Box \Phi^k[n](t,r\omega)=n(r-t,\omega)r^{-k}\chi\big(\tfrac{\langle\,r-t\,\rangle}{r}\big)^2,
\eq
with vanishing data at $-\infty$, where we assume that for some $a\geq 0$
\beqs
\|n\|_{N,1,\infty,a}= {\sum}_{|\alpha|+k\leq N}\,\,
\int_{R}\sup_{\omega\in S^2 } |\,(\langle q\rangle\pa_{q})^k\pa_\omega^\alpha n({q},\omega)| \langle q_+\rangle^{a} \ud q\leq C .
 \eqs
 \begin{lemma} For $|I|\leq N$
\beq
|Z^I \Phi^2[n](t,r\omega)|\lesssim \frac{1}{2r}\ln{\Big(\frac{\langle \, t+r\rangle}{\langle\, t-r\rangle}\Big)} \frac{\|n\|_{N,1,\infty,a}}{\langle (r-t)_+\rangle^{a}}
\eq
and for $k=3,4$
\beq
|Z^I \Phi^k[n](t,r\omega)|
\lesssim \frac{1}{\langle \, t+r\rangle \langle\, t-r\rangle^{k-2}}
 \frac{\|n\|_{N,1,\infty,a}}{\langle (r-t)_+\rangle^{a}}.
\eq
\end{lemma}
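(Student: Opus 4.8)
The plan is to base everything on the retarded (Duhamel) representation of the solution that vanishes in the past,
\[
 \Phi^k[n](t,x)=\frac{1}{4\pi}\int_{\mathbb{R}^3}\frac{n\big(|y|-t+|x-y|,\tfrac{y}{|y|}\big)\,|y|^{-k}\,\chi\big(\tfrac{\langle |y|-t+|x-y|\rangle}{|y|}\big)^2}{|x-y|}\,dy,
\]
and to convert it, for each fixed angle $\sigma=y/|y|$, into the formula quoted in the introduction through the substitution $\lambda=|y|\mapsto q=\lambda+|x-\lambda\sigma|-t$. A direct computation gives $\lambda+|x-\lambda\sigma|=q+t$ and $d\lambda/dq=|x-\lambda\sigma|/\big(|x-\lambda\sigma|+\lambda-r\langle\omega,\sigma\rangle\big)$, so for $k=2$ the Jacobian cancels against the volume factor $\lambda^2\,d\lambda\,dS(\sigma)$ and the weight $\lambda^{-2}|x-y|^{-1}$, leaving exactly
\[
\Phi^2[n](t,r\omega)=\frac{1}{4\pi}\int_{r-t}^{\infty}\!\!\int_{\mathbb{S}^2}\frac{n(q,\sigma)\,\chi(\cdots)}{(t-r)+q+r(1-\langle\omega,\sigma\rangle)}\,dS(\sigma)\,dq.
\]
For $k=3,4$ the same substitution leaves an additional factor $\lambda^{-(k-2)}$, which is precisely what will remove the logarithm below.

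For $k=2$ I would integrate in the angle first. With $\mu=\langle\omega,\sigma\rangle$, integrating out the azimuthal variable yields $\int_{\mathbb{S}^2}\big((t-r)+q+r(1-\mu)\big)^{-1}\,dS(\sigma)=\tfrac{2\pi}{r}\ln\tfrac{(t+r)+q}{(t-r)+q}$, hence, with $\bar n(q)=\sup_\sigma|n(q,\sigma)|$,
\[
|\Phi^2[n]|\lesssim\frac1r\int_{r-t}^{\infty}\bar n(q)\,\ln\frac{(t+r)+q}{(t-r)+q}\,dq .
\]
The substitution $p=(t-r)+q\ge0$ converts the kernel to $\ln(1+2r/p)$, and the bound then rests on three points: the logarithmic singularity at $p=0$ (the direction $\sigma=\omega$) is integrable and lower order; the source sits at $q=O(1)$, i.e.\ $p\sim\langle t-r\rangle$, where $\ln(1+2r/p)\sim\ln(\langle t+r\rangle/\langle t-r\rangle)$; and the large-$p$ tail is controlled by the weight. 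Evaluating the kernel at the concentration scale and using the weighted bound $\int\bar n(q)\langle q_+\rangle^{a}\,dq\lesssim\|n\|_{N,1,\infty,a}$ produces the factor $\ln(\langle t+r\rangle/\langle t-r\rangle)$; in the exterior $r>t$ the lower limit $q\ge r-t>0$ combined with $\langle q_+\rangle^{a}\ge\langle(r-t)_+\rangle^{a}$ supplies the decay $\langle(r-t)_+\rangle^{-a}$.

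For $k=3,4$ I would carry the extra factor $\lambda^{-(k-2)}$ through the same two integrations. Since on the relevant part of the backward cone $\lambda-r\langle\omega,\sigma\rangle$ is comparable to the denominator, this factor is bounded by $\langle t-r\rangle^{-(k-2)}$ near the light cone while its convergence gain removes the logarithm, so the angular integral contributes $\langle t+r\rangle^{-1}$ and the $q$-integral the remaining $\langle t-r\rangle^{-(k-2)}$, giving $\langle t+r\rangle^{-1}\langle t-r\rangle^{-(k-2)}$ together with the weight factor as before.

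Finally, the derivatives $Z^I$ are handled by commutation: since $[\Box,Z]$ is $0$ for translations, rotations and boosts and a multiple of $\Box$ for the scaling, $Z^I\Phi^k[n]$ solves $\Box(Z^I\Phi^k)=\sum_{|J|\le|I|}c_J Z^J\big(n(r-t,\omega)r^{-k}\chi^2\big)$ with vanishing data at $-\infty$. Each $Z$ acting on $n(r-t,\omega)$ yields exactly the combinations $(\langle q\rangle\partial_q)^j\partial_\omega^\alpha n$ measured by $\|n\|_{N,1,\infty,a}$ (using $|t-r|\le\langle q\rangle$ for the boosts and scaling), while $Z$ acting on $r^{-k}\chi^2$ stays of the same order on the support $r\sim t$, the $\chi'$-terms being lower order and supported away from the cone. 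By uniqueness $Z^I\Phi^k[n]$ is thus a finite sum of terms $\Phi^{k}[\tilde n]$ with $\tilde n$ controlled by $\|n\|_{N,1,\infty,a}$, reducing everything to the case $|I|=0$. The main obstacle is the sharp evaluation of the $q$-integral for $k=2$: extracting precisely $\ln(\langle t+r\rangle/\langle t-r\rangle)$ with the correct weight $\langle(r-t)_+\rangle^{-a}$, uniformly across the interior and exterior and through the light cone where the kernel is singular; the $k=3,4$ bookkeeping converting this logarithm into $\langle t-r\rangle^{-(k-2)}$ decay is the analogous technical point.
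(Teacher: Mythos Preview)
Your approach is essentially the same as the paper's, which simply says the result ``follows by first taking the supremum over the angles and then using the formula for the fundamental solution in the radial case as in the proof of Lemma 9 in \cite{L17}.'' You carry out explicitly what the paper defers to that reference: you pass to the Duhamel representation, take the supremum over the angular variable in $n$, and evaluate the remaining angular integral to obtain the $1/r\,\ln\bigl((t+r+q)/(t-r+q)\bigr)$ kernel---which is exactly the radial fundamental-solution formula the paper invokes. The commutation argument for $Z^I$ is a reasonable expansion of what the paper leaves implicit; note only that $Z$ acting on $r^{-k}$ can produce $r^{-(k+1)}$ terms (for translations), so one gets a sum of $\Phi^{k'}[\tilde n]$ with $k'\geq k$ rather than exactly $\Phi^k$, but since the higher-$k$ bounds are stronger this is harmless. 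The one place your sketch is genuinely thin is the claim that ``the source sits at $q=O(1)$'': $n$ is only assumed to lie in the weighted $L^1$ space, not to be concentrated, so the bound on the $q$-integral has to come from a pointwise kernel estimate $\ln\frac{t+r+q}{t-r+q}\lesssim \ln\frac{\langle t+r\rangle}{\langle t-r\rangle}\,\langle q_+\rangle^{a}\langle (r-t)_+\rangle^{-a}$ valid on the support of the cutoff, together with a separate treatment of the integrable logarithmic endpoint---which is exactly what \cite{L17}, Lemma~9 establishes.
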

\begin{proof} The proof follows by first taking the supremum over the angles and then using the formula
for the fundamental solution in the radial case as in the proof of Lemma 9 in \cite{L17}.
\end{proof}

Moreover, we recall from  \cite{L90a,L17} the formula for the solution
 \begin{equation}\label{eq:Phi:2}
\Phi^2[n](t,r\omega)=\int_{r-t}^{\infty} \frac{1}{4\pi}\int_{\bold{S}^2}{\frac{
 n({q},{\sigma})\chi\big(\tfrac{\langle\,{q}\,\rangle}{\rho}\big)  dS({\sigma})d {q}}{t-r+{q}+r\big(1-\langle\, \omega,{\sigma}\rangle\big)}\,
}\,,
\end{equation}
where
\beq
\rho=\frac{1}{2}\, \frac{(t+r+q)(t-r+q)}{t-r+{q}+r\big(1-\langle\, \omega,{\sigma}\rangle\big)}\,.
\eq
Similar expressions can be derived for $\Phi^3[n]$, and $\Phi^4[n]$:
\beq
\Phi^3[n](t,r\omega)=\int_{r-t}^{\infty} \frac{1}{4\pi}\int_{\bold{S}^2}{\frac{
 n({q},{\sigma})\chi\big(\tfrac{\langle\,{q}\,\rangle}{\rho}\big) dS({\sigma})}{(t-r+{q})(t+r+q)}\,
}\, d {q},
\eq
and
\beq
\Phi^4[n](t,r\omega)=\int_{r-t}^{\infty} \frac{1}{4\pi}\int_{\bold{S}^2}n({q},{\sigma}){\frac{
 \chi\big(\tfrac{\langle\,{q}\,\rangle}{\rho}\big)\big(t-r+{q}+r\big(1-\langle\, \omega,{\sigma}\rangle\big) }{(t-r+{q})^2(t+r+q)^2}\,
}\, dS({\sigma}) d {q}.
\eq


\begin{thebibliography}{10}

\bibitem{ASS16}
Spyros Alexakis, Volker Schlue, and Arick Shao, \emph{Unique continuation from
  infinity for linear waves}, Adv. Math. \textbf{286} (2016), 481--544.
  \MR{3415691}

\bibitem{AAG18a}
Y.~Angelopoulos, S.~Aretakis, and D.~Gajic, \emph{Late-time asymptotics for the
  wave equation on spherically symmetric, stationary spacetimes}, Adv. Math.
  \textbf{323} (2018), 529--621. \MR{3725885}

\bibitem{AAG18b}
\bysame, \emph{A vector field approach to almost-sharp decay for the wave
  equation on spherically symmetric, stationary spacetimes},
  https://arxiv.org/abs/1612.01565, 2018.

\bibitem{AAG19}
Yannis Angelopoulos, Stefanos Aretakis, and Dejan Gajic, \emph{A non-degenerate
  scattering theory for the wave equation on extremal reissner-nordstrom},
  2019.

\bibitem{BB15}
Dean Baskin and Ant\^{o}nio S\'{a}~Barreto, \emph{Radiation fields for
  semilinear wave equations}, Trans. Amer. Math. Soc. \textbf{367} (2015),
  no.~6, 3873--3900. \MR{3324913}

\bibitem{BVW15}
Dean Baskin, Andr\'{a}s Vasy, and Jared Wunsch, \emph{Asymptotics of radiation
  fields in asymptotically {M}inkowski space}, Amer. J. Math. \textbf{137}
  (2015), no.~5, 1293--1364. \MR{3405869}

\bibitem{C00}
Y.~Choquet-Bruhat, \emph{The null condition and asymptotic expansions for the
  {E}instein equations}, Ann. Phys. \textbf{9} (2000), no.~3-5, 258--266,
  Journ\'ees Relativistes 99 (Weimar). \MR{1770106}

\bibitem{C86}
Demetrios Christodoulou, \emph{Global solutions of nonlinear hyperbolic
  equations for small initial data}, Comm. Pure Appl. Math. \textbf{39} (1986),
  no.~2, 267--282. \MR{820070}

\bibitem{CK93}
Demetrios Christodoulou and Sergiu Klainerman, \emph{The global nonlinear
  stability of the {M}inkowski space}, Princeton Mathematical Series, vol.~41,
  Princeton University Press, Princeton, NJ, 1993. \MR{1316662}

\bibitem{DHR14}
Mihalis Dafermos, Gustav Holzegel, and Igor Rodnianski, \emph{A scattering
  theory construction of dynamical black hole spacetimes},
  http://www.arxiv.org/abs/1306.5534, 2014.

\bibitem{DR09}
Mihalis Dafermos and Igor Rodnianski, \emph{A new physical-space approach to
  decay for the wave equation with applications to black hole spacetimes},
  X{VI}th {I}nternational {C}ongress on {M}athematical {P}hysics, World Sci.
  Publ., Hackensack, NJ, 2010, pp.~421--432. \MR{2730803}

\bibitem{DRS16}
Mihalis Dafermos, Igor Rodnianski, and Yakov Shlapentokh-Rothman, \emph{Decay
  for solutions of the wave equation on kerr exterior spacetimes iii: The full
  subextremal case |a| < m}, Annals of Mathematics \textbf{183} (2016), no.~3,
  787–913.

\bibitem{DRS18}
\bysame, \emph{A scattering theory for the wave equation on {K}err black hole
  exteriors}, Ann. Sci. \'Ec. Norm. Sup\'er. (4) \textbf{51} (2018), no.~2,
  371--486. \MR{3798305}

\bibitem{DP20}
Yu~Deng and Fabio Pusateri, \emph{On the global behavior of weak null
  quasilinear wave equations}, Comm. Pure Appl. Math. \textbf{73} (2020),
  no.~5, 1035--1099. \MR{4078713}

\bibitem{F62}
F.~G. Friedlander, \emph{On the radiation field of pulse solutions of the wave
  equation}, Proc. Roy. Soc. London Ser. A \textbf{269} (1962), 53--65.
  \MR{142888}

\bibitem{F64}
\bysame, \emph{On the radiation field of pulse solutions of the wave equation.
  {II}}, Proc. Roy. Soc. London Ser. A \textbf{279} (1964), 386--394.
  \MR{164132}

\bibitem{F67}
\bysame, \emph{On the radiation field of pulse solutions of the wave equation.
  {III}}, Proc. Roy. Soc. London Ser. A \textbf{299} (1967), 264--278.
  \MR{226218}

\bibitem{F80}
\bysame, \emph{Radiation fields and hyperbolic scattering theory}, Math. Proc.
  Cambridge Philos. Soc. \textbf{88} (1980), no.~3, 483--515. \MR{583989}

\bibitem{HV17}
Peter Hintz and Andr\'{a}s Vasy, \emph{Stability of {M}inkowski space and
  polyhomogeneity of the metric}, Ann. PDE \textbf{6} (2020), no.~1, Paper No.
  2, 146. \MR{4105742}

\bibitem{H97}
L.~H\"ormander, \emph{Lectures on nonlinear hyperbolic differential equations},
  Math\'ematiques \& Applications (Berlin) [Mathematics \& Applications],
  vol.~26, Springer-Verlag, Berlin, 1997.

\bibitem{K18}
Joe Keir, \emph{The weak null condition and global existence using the
  p-weighted energy method}, https://arxiv.org/abs/1808.09982, 2018.

\bibitem{K86}
S.~Klainerman, \emph{The null condition and global existence to nonlinear wave
  equations}, Nonlinear systems of partial differential equations in applied
  mathematics, {P}art 1 ({S}anta {F}e, {N}.{M}., 1984), Lectures in Appl.
  Math., vol.~23, Amer. Math. Soc., Providence, RI, 1986, pp.~293--326.
  \MR{837683}

\bibitem{K82}
Sergiu Klainerman, \emph{Long-time behavior of solutions to nonlinear evolution
  equations}, Arch. Rational Mech. Anal. \textbf{78} (1982), no.~1, 73--98.
  \MR{654553}

\bibitem{LS06}
H.~Lindblad and A.~Soffer, \emph{Scattering and small data completeness for the
  critical nonlinear {S}chr\"odinger equation}, Nonlinearity \textbf{19}
  (2006), no.~2, 345--353.

\bibitem{L90a}
Hans Lindblad, \emph{Blow-up for solutions of {$\square u=|u|^p$} with small
  initial data}, Comm. Partial Differential Equations \textbf{15} (1990),
  no.~6, 757--821. \MR{1070232}

\bibitem{L17}
\bysame, \emph{On the asymptotic behavior of solutions to the {E}instein vacuum
  equations in wave coordinates}, Comm. Math. Phys. \textbf{353} (2017), no.~1,
  135--184. \MR{3638312}

\bibitem{LR03}
Hans Lindblad and Igor Rodnianski, \emph{The weak null condition for
  {E}instein's equations}, C. R. Math. Acad. Sci. Paris \textbf{336} (2003),
  no.~11, 901--906. \MR{1994592}

\bibitem{LR05}
\bysame, \emph{Global existence for the {E}instein vacuum equations in wave
  coordinates}, Comm. Math. Phys. \textbf{256} (2005), no.~1, 43--110.
  \MR{2134337}

\bibitem{LR10}
\bysame, \emph{The global stability of {M}inkowski space-time in harmonic
  gauge}, Ann. of Math. (2) \textbf{171} (2010), no.~3, 1401--1477.
  \MR{2680391}

\bibitem{LS05}
Hans Lindblad and Avy Soffer, \emph{A remark on long range scattering for the
  nonlinear {K}lein-{G}ordon equation}, J. Hyperbolic Differ. Equ. \textbf{2}
  (2005), no.~1, 77--89. \MR{2134954}

\bibitem{LS06b}
Hans Lindblad and Jacob Sterbenz, \emph{Global stability for charged-scalar
  fields on {M}inkowski space}, IMRP Int. Math. Res. Pap. (2006), Art. ID
  52976, 109. \MR{2253534}

\bibitem{M16}
Georgios Moschidis, \emph{The {$r^p$}-weighted energy method of {D}afermos and
  {R}odnianski in general asymptotically flat spacetimes and applications},
  Ann. PDE \textbf{2} (2016), no.~1, Art. 6, 194. \MR{3493208}

\bibitem{NP68}
E.T. Newman and R.~Penrose, \emph{New conservation laws for zero rest mass
  fields in asymptotically flat space-time}, Proceedings of the Royal Society
  of London, Proc. R. Soc. A, vol. 305, 1968, p.~175204.

\bibitem{S13}
Volker Schlue, \emph{Decay of linear waves on higher-dimensional
  {S}chwarzschild black holes}, Anal. PDE \textbf{6} (2013), no.~3, 515--600.
  \MR{3080190}

\bibitem{W14}
Fang Wang, \emph{Radiation field for {E}instein vacuum equations}, ProQuest
  LLC, Ann Arbor, MI, 2010, Thesis (Ph.D.)--Massachusetts Institute of
  Technology. \MR{2814043}

\end{thebibliography}

\providecommand{\bysame}{\leavevmode\hbox to3em{\hrulefill}\thinspace}
\providecommand{\MR}{\relax\ifhmode\unskip\space\fi MR }
\providecommand{\MRhref}[2]{%
  \href{http://www.ams.org/mathscinet-getitem?mr=#1}{#2}
}
\providecommand{\href}[2]{#2}

\bigskip
{\itshape Address:} {\scshape\footnotesize Johns Hopkins University, 404 Krieger Hall, 3400 N.~Charles Street, Baltimore, MD 21218, US}\\
{\itshape Email:} {\ttfamily lindblad@math.jhu.edu}

\smallskip
{\itshape Address:} {\scshape\footnotesize University of Melbourne, Parkville, VIC, 3010, Australia}\\
{\itshape Email:} {\ttfamily volker.schlue@unimelb.edu.au}

\end{document}